\newtheorem{theorem}{Theorem}[section]
\newtheorem{lemma}[theorem]{Lemma}
\newtheorem{corollary}[theorem]{Corollary}
\newtheorem{proposition}[theorem]{Proposition}
\theoremstyle{remark}
\newtheorem{example}{Example}[section]
\newtheorem{remark}{Remark}[section]
\newtheorem{assumption}{Assumption}[section]
\theoremstyle{definition}
\newtheorem{definition}{Definition}[section]
\newcommand{\const}{{\mathop{\rm const}\nolimits}}
\newcommand{\myp}{\mbox{$\:\!$}}
\newcommand{\mypp}{\mbox{$\;\!$}}
\newcommand{\myn}{\mbox{$\;\!\!$}}
\newcommand{\mynn}{\mbox{$\:\!\!$}}
\newcommand{\PP}{P}
\newcommand{\QQ}{Q}
\newcommand{\EE}{E}
\newcommand{\Var}{\mathrm{Var}_z}
\newcommand{\supp}{\mathop{\rm supp}\nolimits}
\newcommand{\Li}{\operatorname{Li}\nolimits}
\newcommand{\NN}{\mathbb{N}}
\newcommand{\ZZ}{\mathbb{Z}}
\newcommand{\RR}{\mathbb{R}}
\newcommand{\CC}{{\mathbb C}}
\newcommand{\CP}{\varLambda}
\newcommand{\rme}{{\mathrm e}}
\newcommand{\rmi}{{\mathrm i}}
\newcommand{\dif}{{\mathrm d}}
\begin{document}
%
\begin{center}
{\LARGE Unified Derivation of the Limit Shape for Multiplicative
Ensembles of Random Integer Partitions with Equiweighted Parts

}

\vspace{1.1pc} {\large Leonid V.\ Bogachev}
\\[1pc]
\textit{Department of Statistics, School of Mathematics, University
of Leeds, Leeds LS2 9JT, UK.}\\
\textit{E-mail:} {\tt L.V.Bogachev@leeds.ac.uk}
\end{center}

\vspace{1pc} \centerline{Dedicated to Professor Anatoly M.~Vershik
on the occasion of his 80th birthday}

\medskip\noindent
\begin{abstract}
We derive the limit shape of Young diagrams, associated with growing
integer partitions, with respect to multiplicative probability
measures underpinned by the generating functions of the form
$\mathcal{F}(z)=\prod_{\ell=1}^\infty \myn\mathcal{F}_0(z^\ell)$
(which entails equal weighting among possible
parts~$\ell\in\NN$\myp). Under mild technical assumptions on the
function $H_0(u)=\ln\myn(\mathcal{F}_0(u))$, we show that the limit
shape $\omega^*(x)$ exists and is given by the equation
$y=\gamma^{-1}H_0(\rme^{-\gamma x})$, where $\gamma^2=\int_0^1
u^{-1}H_0(u)\,\dif{u}$. The wide class of partition measures covered
by this result includes (but is not limited to) representatives of
the three meta-types of decomposable combinatorial structures ---
assemblies, multisets, and selections. Our method is based on the
usual randomization and conditioning; to this end, a suitable local
limit theorem is proved. The proofs are greatly facilitated by
working with the cumulants of sums of the part counts rather than
with their moments.

\medskip \noindent
\emph{Keywords}: Integer partitions; Young diagrams; limit shape;
local limit theorem; generating functions; cumulants

\medskip \noindent \emph{2010 MSC}:
Primary 05A17; Secondary 60C05, 60F05, 60G50
%
\end{abstract}

\tableofcontents

 \vspace{1.5mm}

\normalfont

\section{Introduction}\label{sec1}

\subsection{Integer partitions and the limit shape problem}\label{sec1.1}
An \emph{integer partition} is a decomposition of a given natural
number into an \emph{unordered sum} of integers; for example,
$12=4+2+2+2+1+1$. More formally, a collection of integers
$\lambda=\{\lambda_1\ge\lambda_2\ge\dots>0,\:\lambda_i\in\NN\}$ is a
partition of $n\in\NN$ if $n=\lambda_1+\lambda_2+\cdots$, which is
sometimes written as $\lambda\vdash n$. We denote by $\CP_{n}$ the
(finite) set of partitions $\lambda\vdash n\in\NN$, and by
$\CP:=\cup_n\CP_n$ the collection of \emph{all} integer partitions.
The terms $\lambda_i\in\lambda$ are called \emph{parts} of the
partition $\lambda$. The alternative notation $\lambda=(1^{\nu_1}
2^{\nu_2}\mynn\dots)$ specifies the \emph{multiplicities} (or
\emph{counts}) of the parts involved,
$\nu_\ell:=\#\{\lambda_i\in\lambda\colon \lambda_i=\ell\}$
\,($\ell\in\NN$), with zero counts usually omitted from the
notation. (Here and below, $\vphantom{\int^y}\#\{\cdot\}$ denotes
the number of elements in a set.) It is evident that the part counts
satisfy the condition $\sum_{\ell=1}^\infty \ell\myp \nu_\ell=n$ for
any partition $\lambda=(1^{\nu_1} 2^{\nu_2}\mynn\dots)\in\CP_n$\myp.

A partition $\lambda=(\lambda_1,\lambda_2,\dots)$ is succinctly
visualized by its \emph{Young diagram} $\Upsilon_{\myn\lambda}$
formed by (left- and bottom-aligned) row blocks with
$\lambda_1,\lambda_2,\dots$ unit square cells (see
Fig.\mypp\ref{fig1}a). If $\lambda\in\CP_n$ (i.e., $\lambda\vdash
n$) then the area of the Young diagram $\Upsilon_{\myn\lambda}$
equals $n$. The upper boundary of $\Upsilon_{\myn\lambda}$ is a
piecewise-constant function
$Y_\lambda\colon[\myp0,\infty)\to\ZZ_+\myn:=\{0,1,2,\dots\}$ given
by (see Fig.\mypp\ref{fig1}b)
\begin{equation}\label{eq:Young}
Y_\lambda(x):=\sum_{\ell\ge x} \nu_\ell\myp,\qquad
\lambda=(1^{\nu_1}2^{\nu_2}\mynn\dots)\in\CP\myp.
\end{equation}
In particular, $Y_\lambda(0)=\sum_{\ell\ge0}
\mynn\nu_\ell=\#\{\lambda_i\in\lambda\}$ is the total number of
parts in partition $\lambda\in\CP$.

If the space $\CP_n$ is endowed with a probability measure $P_n$
(e.g., the uniform measure whereby all $\lambda\in\CP_n$ are
equiprobable) then one can speak of \emph{random partitions}
$\lambda\vdash n$. The \emph{limit shape}, with respect to a family
of probability measures $\PP_n$ on $\CP_n$ as $n\to\infty$, is
understood as (the graph of) a function $y=\omega^*(x)$ such that,
for every $\delta>0$ and any $\varepsilon>0$,
\begin{equation}\label{eq:LLN}
\lim_{n\to\infty}\PP_n\bigl\{\lambda\in\CP_n\colon
\sup\nolimits_{x\ge
\delta}\myn\bigl|\widetilde{Y}^n_\lambda(x)-\omega^*(x)\bigr|>\varepsilon\bigr\}=0,
\end{equation}
where $\widetilde{Y}^{n}_\lambda(x)=A_n^{-1} Y_\lambda(xB_n)$ for
suitable scaling constants $A_n,\myp{}B_n$. It is natural to require
that $A_nB_n=n$, which would render the area of the scaled Young
diagram
$\widetilde{\Upsilon}\vphantom{Y}^{n\vphantom{._.}}_{\myn\lambda}$
to be normalized to unity; the most frequent choice is specified as
$A_n=B_n=\sqrt{n}$\mypp.

\begin{center}
\begin{figure}[ht]
\thinlines
\mbox{}\hspace{1.5pc}\includegraphics[width=2.50in]{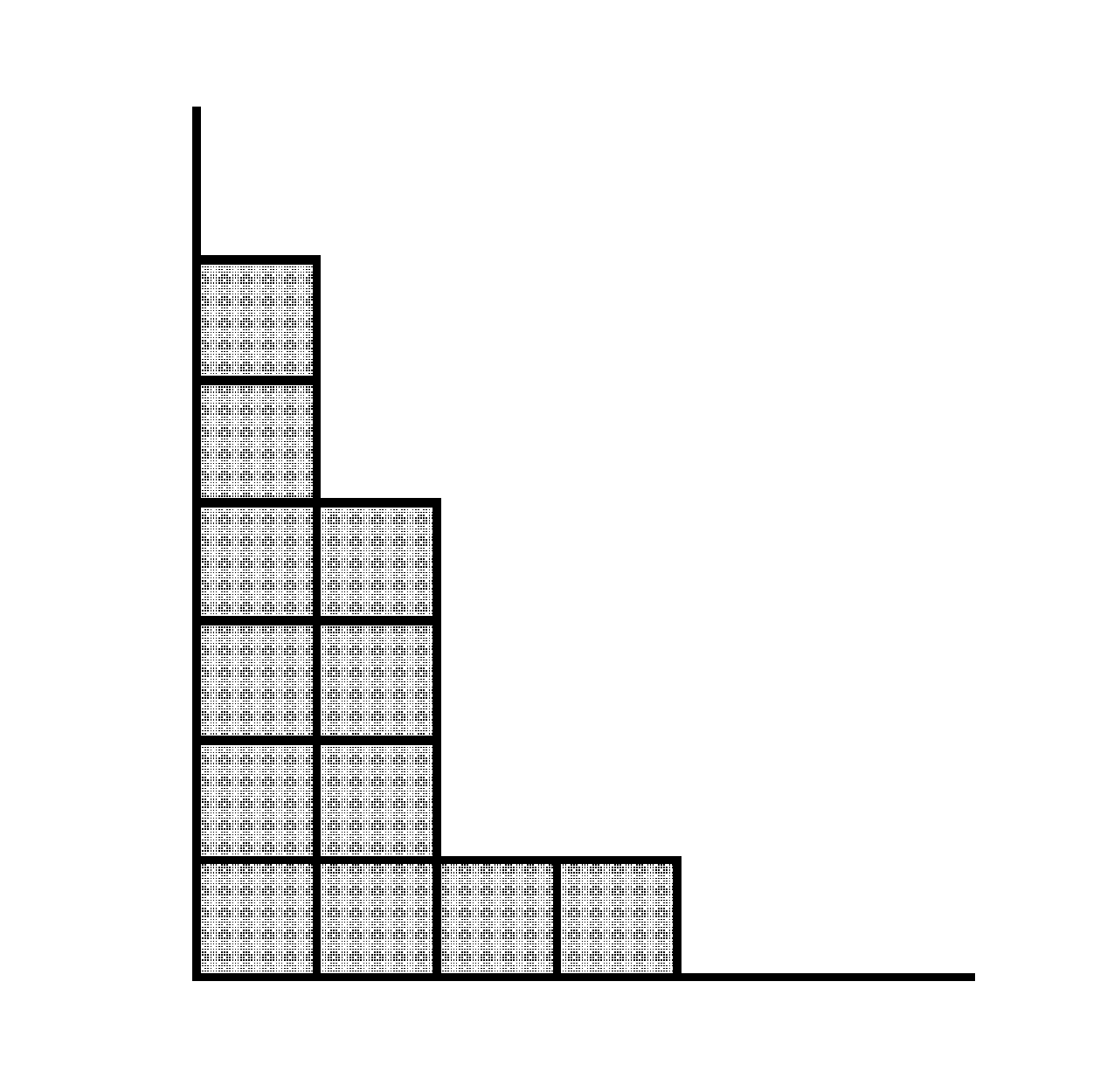}\hspace{.5in}%
\includegraphics[width=2.50in]{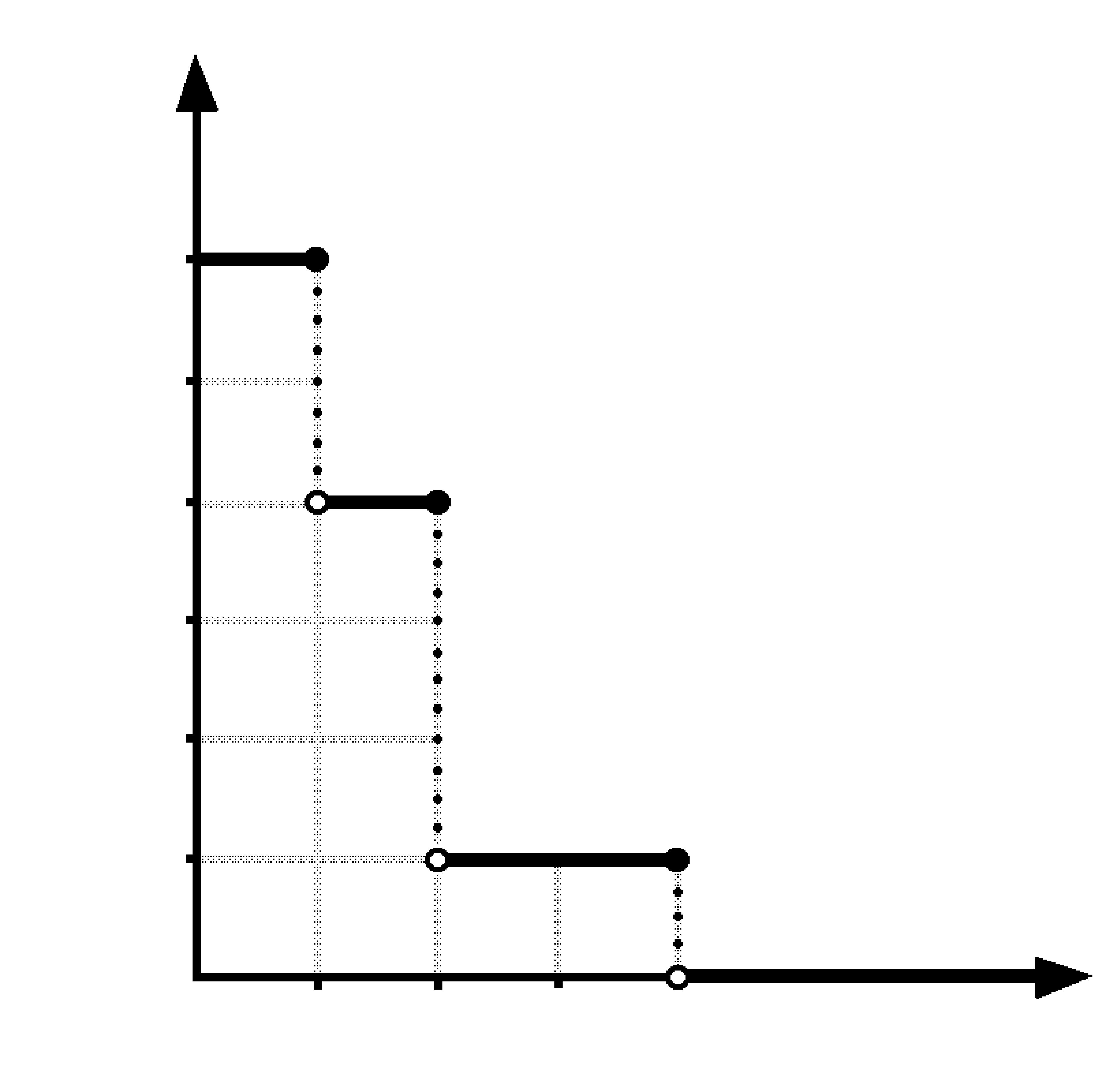}
\put(-322,-7){\mbox{\small (a)}} %
\put(-87,-6){\mbox{\small (b)}}  %
\put(-5,8){\mbox{\footnotesize$x$}}
\put(-330,160){\mbox{\small$\Upsilon_{\myn\lambda}$}}
\put(-97,160){\mbox{\small$Y_\lambda(x)$}}
\put(-160,161){\mbox{\footnotesize$y$}}
\put(-369,8){\mbox{\scriptsize$0$}}
\put(-348.2,8){\mbox{\scriptsize$1$}}
\put(-329,8){\mbox{\scriptsize$2$}}
\put(-309.5,8){\mbox{\scriptsize$3$}}
\put(-290,8){\mbox{\scriptsize$4$}}
\put(-152,8){\mbox{\scriptsize$0$}}
\put(-131.5,8){\mbox{\scriptsize$1$}}
\put(-112,8){\mbox{\scriptsize$2$}}
\put(-92,8){\mbox{\scriptsize$3$}}
\put(-72.5,8){\mbox{\scriptsize$4$}}
\put(-374,35){\mbox{\scriptsize$1$}}
\put(-374,55){\mbox{\scriptsize$2$}}
\put(-374,74){\mbox{\scriptsize$3$}}
\put(-374,93){\mbox{\scriptsize$4$}}
\put(-374,113){\mbox{\scriptsize$5$}}
\put(-374,133){\mbox{\scriptsize$6$}}
\put(-158,35){\mbox{\scriptsize$1$}}
\put(-158,55){\mbox{\scriptsize$2$}}
\put(-158,74){\mbox{\scriptsize$3$}}
\put(-158,93){\mbox{\scriptsize$4$}}
\put(-158,113){\mbox{\scriptsize$5$}}
\put(-158,133){\mbox{\scriptsize$6$}}
\put(-304,89.5){\footnotesize$\lambda_4\!=\!2$} %
\put(-344,124){{\line(3,1){14}}}
\put(-328,126){\footnotesize$\lambda_6\!=\!1$} %
\put(-344,105){{\line(3,1){14}}} %
\put(-328,107){\footnotesize$\lambda_5\!=\!1$} %
\put(-325,86){{\line(3,1){19}}} \put(-325,67){{\line(3,1){19}}}
\put(-304,70.5){\footnotesize$\lambda_3\!=\!2$} %
\put(-325,48){{\line(3,1){19}}}
\put(-304,51.5){\footnotesize$\lambda_2\!=\!2$} %
\put(-286,28){{\line(3,1){19}}}
\put(-265,31.5){\footnotesize$\lambda_1\mynn\!=\!4$} %
\caption{The Young diagram $\Upsilon_{\myn\lambda}$ (a) \,and the
graph of its upper boundary $Y_\lambda(x)=\sum_{\ell\ge x}
\mynn\nu_\ell$ \,(b) \,for a partition
$\lambda=(4,2,2,2,1,1)\equiv(1^2 \mypp 2^3\myp 4^1)\vdash n=12$,
with the part counts $\nu_1=2$, $\nu_2=3$, and $\nu_4=1$.}
\label{fig1}
\end{figure}
\end{center}

\vspace{-1.06pc} Of course, the limit shape and its very existence
depend on the chosen family of probability laws $\PP_n$ on the
partition spaces $\CP_n$ \myp($n\in\NN$). With respect to the
uniform (equiprobable) distribution on $\CP_n$\myp, the limit shape
$\omega^*(x)$ exists under the scaling $A_n=B_n=\sqrt{n}$ and is
determined by the equation (see Fig.\,\ref{fig2}a)
\begin{equation}\label{eq:limit1}
\rme^{-x\pi/\sqrt{6}}+\rme^{-y\pi/\sqrt{6}}=1,\qquad x,y\ge0.
\end{equation}
The limit shape \eqref{eq:limit1} was first identified by Temperley
\cite{Temperley} in relation to the equilibrium shape of a growing
crystal, and derived more rigorously much later by Vershik (as noted
in \cite[p.\:30]{VK1985}) using some asymptotic estimates from
Szalay and Tur\'an \cite{ST}. The proof in its modern form was
outlined by Vershik in \cite{V3}; an alternative proof was given by
Pittel~\cite{Pittel}.

Unlike \cite{Pittel} where only the uniform case was studied,
Vershik's method was used in \cite{V3} to settle the limit shape
problem for more general partition ensembles of the so-called
\emph{multiplicative type} (see Section \ref{sec1.2} below),
including the uniform distribution on the subset
$\check{\CP}_n\subset \CP_n$ of \emph{strict} partitions (i.e., with
distinct parts, $\nu_\ell\le 1$ for all $\ell\in\NN$), whereby the
limit shape, under the same scaling, appears to be of the form (see
Fig.\,\ref{fig2}b)
\begin{equation}\label{eq:limit2}
\rme^{\myp y\pi/\sqrt{12}}=1+\rme^{-x\pi/\sqrt{12}},\qquad x,y\ge0.
\end{equation}
\begin{center}
\begin{figure}[ht]
\thinlines
\mbox{}\hspace{2.5pc}\includegraphics[width=2.50in]{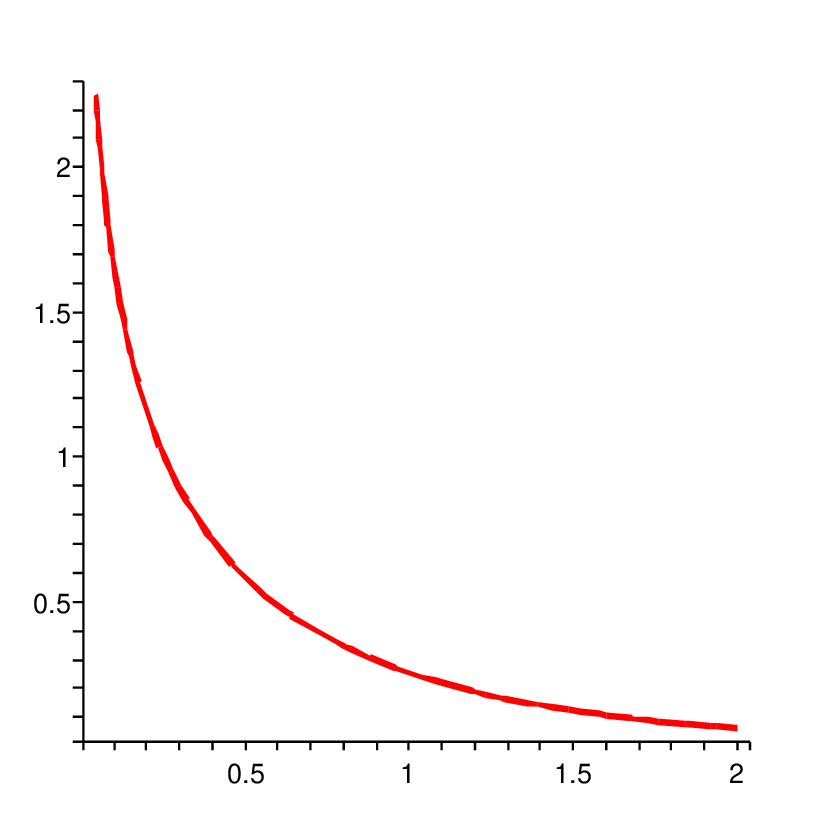}\hspace{.5in}
\includegraphics[width=2.50in]{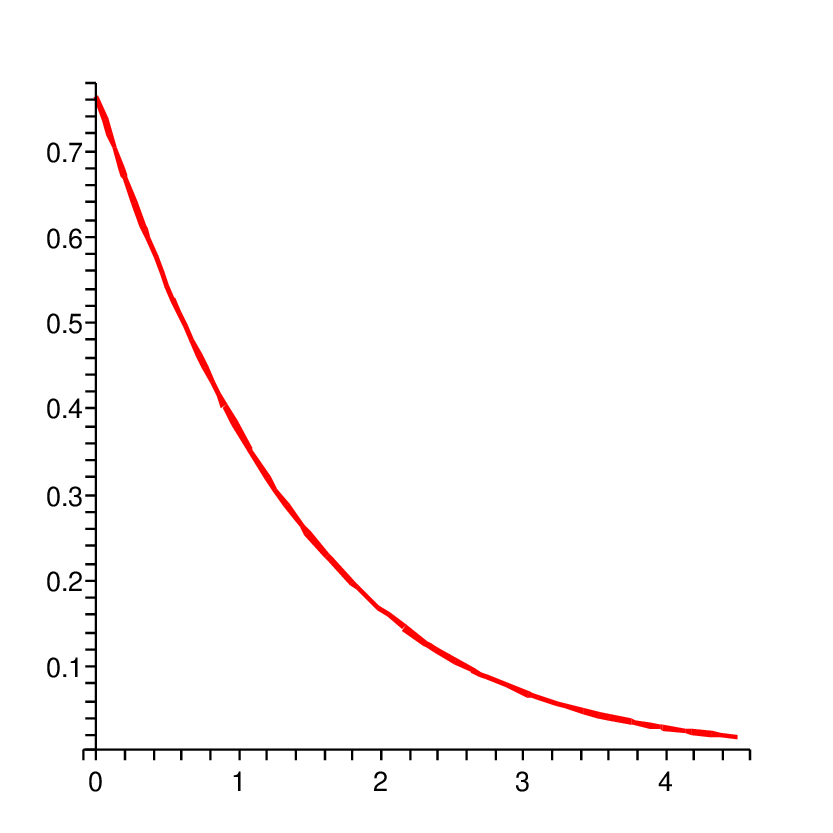}
\put(-355,100){\mbox{\small $\rme^{-x\pi/\sqrt{6}}+\rme^{-y\pi/\sqrt{6}}=1$}} %
\put(-120,100){\mbox{\small $\rme^{\myp y\pi/\sqrt{12}}=1+\rme^{-x\pi/\sqrt{12}}$}} %
\put(-315,-6){\mbox{\small (a)}} %
\put(-97,-6){\mbox{\small (b)}}  %
\put(-232,14){\mbox{\scriptsize$x$}}
\put(-13,14){\mbox{\scriptsize$x$}}
\put(-393,160){\mbox{\scriptsize$y$}}
\put(-172,160){\mbox{\scriptsize$y$}} %
\caption{The limit shape $y=\omega^*(x)$ for two classical ensembles
of uniform (equiprobable) random partitions: (a) unrestricted
partitions ($\CP_n$); \,(b) partitions with distinct parts
($\check{\CP}_n$). In both cases, the normalization in
\eqref{eq:LLN} is specified by $A_n=B_n=\sqrt{n}$\mypp.}
\label{fig2}
\end{figure}
\end{center}

\subsection{Multiplicative measures on partition spaces}\label{sec1.2}
For a general discussion and plentiful examples of multiplicative
probability measures on partitions, the reader may consult the
classic work by Vershik \cite{V3,V4} and more recent papers by
Erlihson and Granovsky~\cite{EG}, Su \cite{Su1} and Yakubovich
\cite{Yakubovich}, with an abundance of further references therein.
In the monograph by Arratia et al.\
\cite{ABT}, such measures are considered in the general context of
decomposable combinatorial structures.

In short, multiplicative measures are underpinned by the generating
functions of the form
\begin{equation}\label{eq:gfF}
\mathcal{F}(z)=\prod_{\ell=1}^\infty
\mathcal{F}_{\myn\ell\myp}(z^\ell)=\prod_{\ell=1}^\infty
\sum_{k=0}^\infty c_k^{(\ell)}z^{k\ell}, \quad\text{with\ \
$c_0^{(\ell)}\equiv1$,\ \ $c_k^{(\ell)}\ge0$\ \ \,($k,\ell\in\NN$)}.
\end{equation}

\vspace{-.5pc} \noindent More precisely, the corresponding family of
the measures $\PP_n$ on the respective partition spaces $\CP_n$
\mypp($n\in\NN$) is defined by setting
\begin{equation}\label{eq:b-Gamma-mult}
\PP_n(\lambda):=\mathfrak{C}_n^{-1}\prod_{\ell=1}^\infty \myp
c_{\nu_\ell}^{(\ell)}\myp,\qquad
\lambda=(1^{\nu_1}2^{\nu_2}\mynn\dots)\in\CP_n\myp,
\end{equation}
where $\mathfrak{C}_n$ is the suitable normalization constant. For
example, the generating function
$\mathcal{F}_{\myn\ell\myp}(u)\equiv (1-u)^{-1}=\sum_{k=0}^\infty
u^k$ defines the uniform measure on each $\CP_n$\myp, whereas the
choice $\mathcal{F}_{\myn\ell\myp}(u)\equiv 1+u$ leads to the
uniform measure on the space $\check{\CP}_n$ of strict partitions.

According to \eqref{eq:b-Gamma-mult}, each generating function
$\mathcal{F}_{\myn\ell\myp}(\cdot)$ assigns some weights, relative
to the uniform case with $c^{(\ell)}_k\equiv 1$, to specific values
of the part count $\nu_{\ell}=\#\{\lambda_i=\ell\}$ in a random
partition $\lambda=(1^{\nu_1}2^{\nu_2}\mynn\dots)\in\CP$.
Furthermore, possible variation of the functions
$\mathcal{F}_{\myn\ell\myp}(\cdot)$ across $\ell\in\NN$ determines a
certain weighting among different parts that may contribute to a
partition.
\begin{definition}\label{def:equiweighted}
If the functions $\mathcal{F}_{\myn\ell\myp}(\cdot)$ do not depend
on $\ell$ (hence, $c^{(\ell)}_k\equiv c_k$ for all $\ell\in\NN$)
then we say that the parts are \emph{equiweighted} (which is alluded
to in the title of the paper).
\end{definition}

\begin{remark}
Note from the definition \eqref{eq:b-Gamma-mult} that the marginal
distribution of a random count $\nu_\ell$ is \emph{$\ell$-biased},
being given by $\PP_n\{\nu_\ell=k\}=c^{(\ell)}_k
\mathfrak{C}^{(\ell)}_{n-k\ell}\myp/\mathfrak{C}_n$ ($0\le k\le
n/\ell$), where
$\mathfrak{C}^{(\ell)}_{m}:=\sum_{\lambda\in\CP_{m}\!}\prod_{j\ne
\ell} c_{\nu_j}^{(j)}$ (with the convention
$\mathfrak{C}^{(\ell)}_{0}\mynn:=1)$. Thus, the assumption that the
parts are equiweighted\strut{} does not imply that their counts have
the same distribution.
\end{remark}

Building on Vershik's pioneering ideas, the limit shape problem was
advanced in various directions (see
\cite{Comtet,DVZ,EG,GH,GSE,Romik,Su1,V4,VY-MMJ,Yakubovich} and
further references therein). In a separate but related development,
Logan and Shepp \cite{LS1977} and Vershik and Kerov
\cite{VK1977,VK1985} found the limit shape for a different
(non-multiplicative) ensemble of partitions endowed with the
\emph{Plancherel measure} emerging in relation with representation
theory of the symmetric group. A recent review of both areas can be
found in~\cite{Su1}.

Returning to the multiplicative class of probability measures on
partitions, note that most of the aforementioned papers on the limit
shape problem have focused on the particular case
\begin{equation}\label{eq:b_ell}
\mathcal{F}_{\myn\ell\myp}(u)=
(\mathcal{F}_0(u))^{r_\ell}\myn,\qquad \ell\in\NN,
\end{equation}
for some classes of sequences $r_\ell>0$ (usually assumed to behave
like $r_\ell\sim \const\cdot \ell^{\myp p-1}$ as $\ell\to\infty$,
with $p>0$) but subject to a more limited choice of the basic
generating function $\mathcal{F}_0(u)$, often borrowed from the
standard equiprobable cases mentioned above (see, e.g.,
\cite{V3,V4,EG, GSE,Su1}).

A recent paper by Yakubovich \cite{Yakubovich} offers a more general
treatment by considering a wider class of functions
$\mathcal{F}_0(u)$; a typical condition imposed there (see, e.g.,
\cite[Lemma 10]{Yakubovich}) is that $\mathcal{F}_0(u)$ be complex
analytic in a disk centered at zero up to an isolated (real)
singularity point $u_1\ge 1$, which \emph{must be a pole} if
$u_1=1$. Some simple examples such as $\mathcal{F}_0(u)=(1-u)^{-r}$
with a real (non-integer) $r>0$ do not formally conform to this
requirement but none the less have a limit shape \cite{V3}
(the assumption in \cite{V3} that $r_\ell$'s are integer is in fact
not essential in the light of the Meinardus theorem, see
\cite{GSE}). On the other hand, one can write
$\mathcal{F}_0(u)=(f_0(u))^r$, where the function
$f_0(u)=(1-u)^{-1}$ has a required pole at $u_1=1$ and thus fits in
the framework of \cite{Yakubovich}.\footnote{\ This substitution
replaces the normalization $r_1=1$ adopted for convenience in
\cite[\S1.1, p.\,1254]{Yakubovich} by $r_1=r>0$, which is not
essential for the validity of results in \cite{Yakubovich}.
Incidentally, this remark shows that it is more natural to impose
conditions on the function $H_0(u):=\ln\myn(\mathcal{F}_0(u))$
rather than on $\mathcal{F}_0(u)$ itself.\label{footnote1}}\
\,However, there are examples with a genuine non-pole singularity of
$\mathcal{F}_0(u)$ which do possess a limit shape (see such examples
in Section~\ref{sec6} below).

\subsection{An outline of the main result}

In the present paper, we confine ourselves to the class of
multiplicative ensembles of partitions \emph{with equiweighted
parts} (see Definition~\ref{def:equiweighted}), specified by the
simplest case $\mathcal{F}_{\myn\ell}(u)\equiv\mathcal{F}_0(u)$ in
\eqref{eq:gfF} (which also corresponds to setting $r_\ell\equiv 1$
in the model \eqref{eq:b_ell}) but with a fairly general variety of
permissible generating functions $\mathcal{F}_0(u)$. In particular,
measures $\PP_n$ covered by our method include (but are not limited
to) representatives of the three classical meta-types of
decomposable combinatorial structures
--- assemblies, multisets and selections
(see \cite[Ch.\:2]{ABT} for a general background and also concrete
examples in Section \ref{sec6} below).

A loose formulation of our main result about the limit shape is as
follows.

\begin{theorem}\label{th:main1}
Denote $H_0(u):=\ln\myn(\mathcal{F}_0(u))$,
\,$\gamma:=\sqrt{\int_0^1\myn u^{-1}H_0(u)\,\dif{u}}$\,, \,and
\begin{equation}\label{eq:omega}
\omega^*(x):=\gamma^{-1}H_0(\rme^{-\gamma x}), \qquad x\ge0.
\end{equation}
Then, under mild technical conditions on the function $H_0(u)$, for
every\/ \myp$\delta>0$ and any\/ $\varepsilon>0$
\begin{equation}\label{eq:looseLS}
\lim_{n\to\infty}\PP_n\bigl\{\lambda\in\CP_n\colon\sup\nolimits_{x\ge\delta}
\mynn\bigl|\widetilde{Y}^n_\lambda(x)-\omega^*(x)\bigr|
>\varepsilon\bigr\}=0,
\end{equation}
where $\widetilde{Y}^n_\lambda(x):=n^{-1/2}\myp Y_\lambda(x\myp
n^{1/2})$.
\end{theorem}

\begin{remark}\label{rm:1.3}
The restriction $x\ge\delta>0$ in \eqref{eq:looseLS} takes into
account the possibility $\omega^*(0)=\infty$ (cf.\
\eqref{eq:limit1},~\eqref{eq:limit2}). If $\omega^*(0)<\infty$ then
the supremum in \eqref{eq:looseLS} can be extended to all $x\ge0$.
\end{remark}

Like in \cite{Fristedt,V3,V4,EG,Yakubovich}, our proof employs the
elegant probabilistic approach in the theory of decomposable
combinatorial structures based on randomization and conditioning,
first applied in the context of random partitions by Fristedt
\cite{Fristedt} (see the monograph \cite{ABT} and an earlier review
\cite{AT} for a general discussion of the method and many examples).
The idea is to introduce a suitable measure $\QQ_z$ on the union
space $\CP=\cup_n\CP_n$ (depending on an auxiliary ``free''
parameter $z\in(0,1)$), such that a given measure $\PP_n$ on $\CP_n$
is recovered as the conditional distribution
$\PP_n(\cdot)=\QQ_z(\cdot\mypp|\myp\CP_n)$.

The great advantage of the multiplicativity property \eqref{eq:gfF}
is that $\QQ_z$ can be constructed as a product measure, resulting
in \emph{mutually independent} random counts $\nu_\ell$\myp.
Clearly, such a device calls for the asymptotics of the probability
$\QQ_z(\CP_n)$, which should be obtained by proving a suitable
\emph{local limit theorem}; the latter suggests that it is natural
to calibrate the parameter $z$ from the asymptotic equation
$\EE_z(N_\lambda)=n\left(1+o(1)\right)$, where $\EE_z$ is the
expectation with respect to the measure $\QQ_z$ and
$N_\lambda:=\lambda_1+\lambda_2+\cdots=\sum_{\ell=1}^\infty
\ell\myp\nu_\ell$ \,(so that $\CP_n=\{\lambda\in\CP\colon
N_\lambda=n\}$). This is sufficient to ensure the (uniform)
convergence of the \emph{expectation}
$\EE_z\bigl[\widetilde{Y}^n_\lambda(x)\bigr]$ to the limit
$\omega^*(x)$ specified in \eqref{eq:omega}, together with the
corresponding convergence of the random paths
$\widetilde{Y}^n_\lambda(\cdot)$ in $Q_z$-probability. However, in
order to extend this to the original measure $\PP_n$ using the local
limit theorem, our methods require an improved estimate of the
approximation error $\EE_z(N_\lambda)-n$ to at least $O(n^{3/4})$.
Let us also point out that the proofs are greatly facilitated by
working with the \emph{cumulants} of sums of the part counts
$\nu_\ell$ rather than with their moments.

\subsubsection*{Layout.} The rest of the paper is organized as
follows. In Section~\ref{sec2.1}, we define the multiplicative
families of measures $\QQ_z$ and $\PP_n$ on the corresponding spaces
of partitions with equiweighted parts. Important cumulant expansions
and certain technical conditions on the generating function
$\mathcal{F}_0(u)$ are discussed in Section \ref{sec2.2}. In
Section~\ref{sec3.1}, a suitable value of the parameter $z\in(0,1)$
is chosen (Theorem~\ref{th:kappa}), which implies the convergence of
``expected'' (scaled) Young diagrams to the limit curve
$y=\omega^*(x)$ (Theorem~\ref{th:3.2}). Refined first-order moment
asymptotics are obtained in Section~\ref{sec3.3}
(Theorem~\ref{th:4.1}), while higher-order cumulant sums are
analyzed in Section~\ref{sec4}. The local limit theorem
(Theorem~\ref{th:LCLT}) is established in Section \ref{sec5}, which
paves the way to the proof of the limit shape results in
Section~\ref{sec5.4} with respect to both $\QQ_z$ and $\PP_n$
(Theorems \ref{th:LSQ} and~\ref{th:LSP}, respectively). Finally, our
results are illustrated by a number of examples in
Section~\ref{sec6}.

\subsubsection*{Some notation.}
We denote $\ZZ_+:=\{k\in\ZZ\colon k\ge0\}$ and
$\RR_+:=[\myp0,\infty)$. The real part of $s\in\CC$ is denoted
$\Re\myp(s)$. The notation $x_n\asymp y_n$ signifies that
$0<\liminf_{n\to\infty} x_n/y_n\le\limsup_{n\to\infty}
x_n/y_n<\infty$, whereas $x_n\sim y_n$ is a shorthand for
$\lim_{n\to\infty}x_n/y_n=1$. The standard symbols $\lfloor
x\rfloor:=\max\myn\{k\in\ZZ\colon k\le x\}$ and $\lceil
x\rceil:=\min\myn\{k\in\ZZ\colon k\ge x\}$ denote, respectively, the
floor and ceiling integer parts of $x\in\RR$\myp.

\section{Generating functions and cumulants}\label{sec2}

\subsection{Global measure\/ \myp$\QQ_{z}$ and conditional
measure\/ $\PP_n$}\label{sec2.1}

Let $\Phi:= \ZZ_+^{\NN}$ be the space of functions $\nu\colon
\NN\to\ZZ_+$ (i.e., sequences $\nu=\{\nu_\ell\}$ with nonnegative
integer values), and consider the subspace $\Phi_0
:=\{\nu\in\Phi\,{:}\;\#(\supp\myp\nu)<\infty\}$ of functions with
\emph{finite support}, where $\supp\myp\nu:=\{\ell \in
\NN\,{:}\;\nu_\ell >0\}$. The space $\Phi_0$ is in one-to-one
correspondence with the union set $\CP=\bigcup_{n\in\ZZ_+}\!\CP_n$
under the identification \strut{}of the values $\nu_\ell$'s
(including zeroes) with the multiplicities of the virtual parts
$\ell$'s, respectively, leading to a partition
$\lambda=(1^{\nu_1}2^{\nu_2}\mynn\dots)$ of the integer
$N_\lambda=\sum_{\ell=1}^\infty \ell\myp \nu_\ell \in\ZZ_+$\myp.

Let $c_0=1,c_1,c_2,\dots$ be a sequence of nonnegative numbers such
that \emph{not all $c_{k}$'s vanish for $k\ge1$}, and assume that
the corresponding power series (generating function)
\begin{equation}\label{eq:F0}
\mathcal{F}_0(u):=\sum_{k=0}^\infty c_{k}\myp u^{k},\qquad u\in\CC,
\end{equation}
is convergent for all $|u|<1$. For every $z\in(0,1)$, let us define
a probability measure $\QQ_{z}$ on the space $\Phi=\ZZ_+^{\NN}$ as
the distribution of a random sequence $\{\nu_\ell,\,\ell\in \NN\}$
with \emph{mutually independent values} and \emph{marginal
distributions}
\begin{equation}\label{Q}
\QQ_{z}\{\nu_\ell=k\}=\frac{c_k
z^{k\ell}}{\mathcal{F}_0(z^\ell)}\myp,\qquad k\in\ZZ_+\myp.
\end{equation}

\begin{lemma}\label{lm:F0}
For $z\in (0,1)$, the condition
\begin{equation}\label{N}
\mathcal{F}(z):=\prod_{\ell=1}^\infty \mathcal{F}_0(z^{\ell})<\infty
\end{equation}
is necessary and sufficient in order that $\QQ_{z}(\Phi_0)=1$.
Furthermore, if\/ $\mathcal{F}_0(u)$ is finite for all $u\in(0,1)$
then the condition \eqref{N} is satisfied for all $z\in(0,1)$.
\end{lemma}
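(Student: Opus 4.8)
The plan is to establish the equivalence $\QQ_z(\varPhi_0)=1 \Leftrightarrow F(z)<\infty$ by analyzing the complementary event, and then separately verify the final claim about finiteness of $\ff$. First I would observe that under $\QQ_z$ the counts $\{\nu_\ell\}$ are independent, so by the Borel--Cantelli machinery the event $\varPhi_0=\{\nu\colon \nu_\ell=0 \text{ for all but finitely many }\ell\}$ has full measure precisely when the complementary events $\{\nu_\ell>0\}$ occur only finitely often almost surely. Since the $\nu_\ell$ are independent, the second Borel--Cantelli lemma applies, and the dichotomy is governed entirely by the convergence or divergence of the series $\sum_{\ell=1}^\infty \QQ_z\{\nu_\ell>0\}$. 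Thus the proof reduces to showing that this series converges if and only if the product $\prod_\ell \ff(z^\ell)$ converges.

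Next I would compute the per-level probability explicitly from the marginal law (\ref{Q}):
\begin{equation*}
\QQ_z\{\nu_\ell>0\}=1-\QQ_z\{\nu_\ell=0\}=1-\frac{c_0}{\ff(z^\ell)}=1-\frac{1}{\ff(z^\ell)}=\frac{\ff(z^\ell)-1}{\ff(z^\ell)}.
\end{equation*}
Writing $p_\ell:=\QQ_z\{\nu_\ell>0\}$, I would then invoke the standard fact that for nonnegative terms $p_\ell\in[0,1)$ the series $\sum_\ell p_\ell$ converges if and only if the product $\prod_\ell(1-p_\ell)$ converges to a strictly positive limit. Here $1-p_\ell = 1/\ff(z^\ell)$, so $\prod_\ell(1-p_\ell)=1/F(z)$ (interpreting the product and $F(z)$ in $(0,\infty]$). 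Hence $\sum_\ell p_\ell<\infty$ is equivalent to $F(z)<\infty$, which by the Borel--Cantelli dichotomy is equivalent to $\QQ_z(\varPhi_0)=1$. This gives both directions of the necessary-and-sufficient statement simultaneously.

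For the final assertion, suppose $\ff(s)$ is finite for all $|s|<1$; I must show $F(z)<\infty$ for every fixed $z\in(0,1)$. The natural route is to bound the tail of $\sum_\ell(\ff(z^\ell)-1)$. Since $z\in(0,1)$ we have $z^\ell\to0$, and because $\ff(s)=1+\sum_{k\ge1}c_k s^k$ with $c_0=1$, continuity of $\ff$ at $0$ (guaranteed by finiteness of $\ff$ on the disk, which makes $\ff$ analytic there) yields $\ff(z^\ell)-1\to0$. More quantitatively, pick any $s_0\in(z,1)$; then $\ff(s_0)<\infty$ forces $c_k s_0^k\to0$, so the $c_k$ are bounded by $C s_0^{-k}$ for some constant, whence for large $\ell$, $\ff(z^\ell)-1=\sum_{k\ge1}c_k z^{k\ell}\le C\sum_{k\ge1}(z^\ell/s_0)^k = O(z^\ell)$, a geometrically summable bound. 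Therefore $\sum_\ell(\ff(z^\ell)-1)<\infty$, which (again by the product--series equivalence, since each factor $\ff(z^\ell)\ge1$) gives $F(z)<\infty$.

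The main obstacle I anticipate is not the Borel--Cantelli step, which is routine, but handling the degenerate and boundary cases cleanly: the product--series equivalence requires justifying that $F(z)=\infty$ corresponds to $\prod(1-p_\ell)=0$ (i.e.\ $\sum p_\ell$ diverges) without any single factor $\ff(z^\ell)$ itself blowing up, and one must confirm each $\ff(z^\ell)$ is indeed finite and $\ge1$ so that the $p_\ell$ are genuine probabilities in $[0,1)$. One should also verify that the hypothesis ``not all $c_k$ vanish for $k\ge1$'' is used only to rule out the trivial situation where $\ff\equiv1$ (making every $\nu_\ell\equiv0$ deterministically, a case where $\varPhi_0$ has full measure trivially), so the stated equivalence remains valid. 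Keeping careful track of the $(0,\infty]$-valued products is where the argument needs the most care.
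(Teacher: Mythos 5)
Your proof is correct and follows essentially the same route as the paper: Borel--Cantelli applied to the independent events $\{\nu_\ell>0\}$ with $\QQ_z\{\nu_\ell>0\}=1-\ff(z^\ell)^{-1}$, the standard series--product equivalence to identify $\sum_\ell\bigl(1-\ff(z^\ell)^{-1}\bigr)<\infty$ with $F(z)<\infty$, and then a bound on $\sum_\ell\bigl(\ff(z^\ell)-1\bigr)$ for the second claim. The only (immaterial) difference is in that last bound: the paper interchanges the order of summation to get $\sum_k c_k z^k/(1-z^k)\le \ff(z)/(1-z)$, whereas you bound the coefficients by $c_k\le C s_0^{-k}$ for some $s_0\in(z,1)$ to obtain a geometric tail $\ff(z^\ell)-1=O(z^\ell)$; both are valid.
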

\begin{proof}
According to \eqref{Q} we have
$\QQ_{z}\{\nu_\ell>0\}=1-1/\mathcal{F}_0(z^\ell)$
\mypp($\ell\in\NN$). Hence, Borel--Cantelli's lemma (see, e.g.,
\cite[Ch.\,VIII, \S\myp3]{Feller1}) implies that
$\QQ_{z}\{\nu\in\Phi_0\}=1$ if and only if $\sum_{\ell=1}^\infty
\left(1-1/\mathcal{F}_0(z^\ell)\right)<\infty$. In turn, the latter
bound is equivalent to~\eqref{N}.

To prove the second statement, observe using \eqref{eq:F0} that
\begin{align*}
\ln\myn(\mathcal{F}(z))=\sum_{\ell=1}^\infty\ln\myn(\mathcal{F}_0(z^\ell))
&\le \sum_{\ell=1}^\infty
\bigl(\mathcal{F}_0(z^\ell)-1\bigr)=\sum_{k=1}^\infty
c_k \sum_{\ell=1}^\infty z^{k \ell}\\
&=\sum_{k=1}^\infty \frac{c_kz^k}{1-z^k}\le
\frac{1}{1-z}\sum_{k=1}^\infty c_k z^k\le
\frac{\mathcal{F}_0(z)}{1-z}<\infty,
\end{align*}
which implies the condition~\eqref{N}.
\end{proof}

Lemma \ref{lm:F0} ensures that the random sequence $\{\nu_\ell\}$
defined above (see~\eqref{Q}) belongs to the space $\Phi_0$
($\QQ_{z}$-a.s.)\footnote{\ The abbreviation ``a.s.'' stands for
\emph{almost surely}, that is, with probability~$1$.} and therefore
determines a \emph{finite} (random) partition $\lambda\in \CP$. By
the mutual independence of the values $\nu_\ell$\myp, the
corresponding $\QQ_z$-probability is given by
\begin{equation}\label{Q1}
\QQ_{z}(\lambda) =\prod_{\ell=1}^\infty\frac{c_{\nu_\ell}\myp
z^{\ell\myp\nu_\ell}}{\mathcal{F}_0(z^\ell)} =\frac{c(\lambda)\myp
z^{N_\lambda}} {\mathcal{F}(z)}\myp,\qquad \lambda=(1^{\nu_1}
2^{\nu_2}\mynn\dots)\in\CP,
\end{equation}
where $N_\lambda=\sum_{\ell=1}^\infty \ell\myp\nu_\ell<\infty$
($\QQ_z$-a.s.) and (see~\eqref{eq:F0})
\begin{equation}\label{eq:c-mult}
c(\lambda)=\prod_{\ell=1}^\infty c_{\nu_\ell}<\infty,\qquad
\lambda=(1^{\nu_1} 2^{\nu_2}\mynn\dots)\in\CP.
\end{equation}

\begin{remark}
The infinite product \eqref{eq:c-mult} defining $c(\lambda)$
contains only finitely many factors different from $1$, because any
$\ell\notin\supp\myp\nu$ renders $\nu_\ell=0$, so that
$c_{\nu_\ell}=c_0=1$.
\end{remark}
\begin{remark}
For the ``empty'' partition $\lambda_{\emptyset}\vdash 0$ formally
associated with the configuration $\nu\equiv0$, formula \eqref{Q1}
yields $\QQ_z(\lambda_{\emptyset})=1/\mathcal{F}(z)>0$. On the other
hand, $\QQ_z(\lambda_{\emptyset})<1$, since
$\mathcal{F}_0(u)>\mathcal{F}_0(0)=1$ for $u>0$ and hence, according
to the definition~\eqref{N}, $\mathcal{F}(z)>1$.
\end{remark}

On the subspace $\CP_{n}\subset\CP$, the measure $\QQ_{z}$ induces
the conditional distribution
\begin{equation}\label{Pn}
\PP_n(\lambda):=\QQ_{z}(\lambda\myp|\myp\CP_n)
=\frac{\QQ_{z}(\lambda)}{\QQ_{z}(\CP_n)} \myp,\qquad \lambda\in
\CP_n\myp.
\end{equation}
The formula \eqref{Pn} is well defined as long as
$\QQ_{z}(\CP_n)>0$, that is, if there is at least one partition
$\lambda\in\CP_n$ with $c(\lambda)>0$ (see~\eqref{Q1}). An obvious
sufficient condition is as follows.
\begin{lemma}
Suppose that $c_1>0$. Then $\QQ_z(\CP_n)>0$ for all $n\in\ZZ_+$\myp.
\end{lemma}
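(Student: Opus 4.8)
The plan is to reduce positivity of the mass $\QQ_z(\CP_n)$ to exhibiting a single partition $\lambda\in\CP_n$ with strictly positive weight $c(\lambda)$. By formula (\ref{Q1}), each individual probability equals $\QQ_z(\lambda)=c(\lambda)\myp z^{N_\lambda}/F(z)$; since $z\in(0,1)$ forces $z^{N_\lambda}>0$, and Lemma~\ref{lm:F0} guarantees $0<F(z)<\infty$, we have $\QQ_z(\lambda)>0$ precisely when $c(\lambda)>0$. As $\QQ_z(\CP_n)=\sum_{\lambda\in\CP_n}\QQ_z(\lambda)$ is a sum of non-negative terms, it suffices to produce one admissible $\lambda$.

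The guiding idea is to choose $\lambda$ so that its weight $c(\lambda)=\prod_{\ell}c_{\nu_\ell}$ involves only the coefficients $c_0=1$ and $c_1>0$. This is achieved by any partition in which every part occurs at most once, i.e.\ $\nu_\ell\in\{0,1\}$ for all $\ell$. The simplest such choice for $n\ge1$ is the one-part partition $\lambda=(n)$, encoded by $\nu_n=1$ and $\nu_\ell=0$ otherwise; then $N_\lambda=n$ and $c(\lambda)=c_1\prod_{\ell\ne n}c_0=c_1>0$. For $n=0$, the empty partition $\lambda_{\emptyset}$ has all $\nu_\ell=0$, whence $c(\lambda_{\emptyset})=1>0$.

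Combining these cases, for every $n\in\ZZ_+$ we obtain $\QQ_z(\CP_n)\ge\QQ_z(\lambda)>0$, as claimed. There is essentially no obstacle here; the only point worth noting is that one must \emph{avoid} the partition $(1^n)$, whose weight $c((1^n))=c_n$ may vanish, and instead confine the multiplicity pattern to $\{0,1\}$, so that the single hypothesis $c_1>0$ already suffices.
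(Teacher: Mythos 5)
Your proof is correct and follows exactly the route the paper intends: the paper leaves this lemma without a written proof (calling it ``obvious''), having just observed that $\QQ_z(\CP_n)>0$ amounts to exhibiting a single $\lambda\in\CP_n$ with $c(\lambda)>0$, and your witness --- the one-part partition $\lambda=(n)$ with $\nu_n=1$, giving $c(\lambda)=c_1>0$, plus the empty partition for $n=0$ --- is precisely the intended instantiation. Your closing remark about avoiding $(1^n)$, whose weight $c_n$ might vanish, is a sensible clarification of why $c_1>0$ alone suffices.
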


The following key fact is a direct consequence of the
definition~\eqref{Q1}.

\begin{lemma}\label{lm:noz}
The formula\/ \eqref{Pn} for the measure\/ $\PP_n$ is reduced to the
expression \textup{(}cf.\ \eqref{eq:b-Gamma-mult}\textup{)}
\begin{equation}\label{eq:P_n}
\PP_n(\lambda)=\frac{c(\lambda)}{\mathfrak{C}_n}\ \ \quad
(\lambda\in\CP_n),\qquad \mathfrak{C}_n=\sum_{\lambda'\in\CP_n}
c(\lambda'),
\end{equation}
where $c(\lambda)$ is defined in \eqref{eq:c-mult}. In particular,
$P_n$ does not depend on~$z$\myp.
\end{lemma}

\begin{proof} If $\CP_n\ni\lambda\leftrightarrow\nu\in\Phi_0$ then
$N_\lambda=n$ and the formula \eqref{Q1} is reduced to
$\QQ_{z}(\lambda)= c(\lambda)\myp z^n/\mathcal{F}(z)$. In turn, the
ratio in \eqref{Pn} amounts to the expression in~\eqref{eq:P_n},
which is $z$-free.
\end{proof}

Specific examples of multiplicative measures $\PP_n$ with
equiweighted parts will be given below in Section~\ref{sec6},
together with the corresponding limit shapes determined by
Theorem~\ref{th:main1}.

\subsection{Expansion of the logarithm of the generating function $\mathcal{F}_0(u)$}
Recalling the power series expansion \eqref{eq:F0} for
$\mathcal{F}_0(u)$, consider the corresponding expansion of its
logarithm,
\begin{equation}\label{eq:H}
H_0(u):=\ln\myn(\mathcal{F}_0(u))=\sum_{k=1}^\infty a_k\myp
u^k,\qquad u\in\CC\myp,
\end{equation}
assuming that the series \eqref{eq:H} is (absolutely) convergent for
all $|u|<1$. Here $\ln\myn(\cdot)$ means the principal branch of the
logarithm specified by the value $\ln\myn(\mathcal{F}_0(0))=\ln
1=0$.

\begin{remark}\label{rm:b=a}
Substituting \eqref{eq:F0} into \eqref{eq:H}, it is evident that
$a_1=c_1$; more generally, if $j_*\myn:=\min\{j\ge 1\colon
a_j\ne0\}$ and $k_*\myn:=\min\{k\ge 1\colon c_{k}>0\}$ then
$j_*\myn=k_*$ and $a_{j_*}\mynn=c_{k_*}\mynn>0$. In particular, it
follows that the first non-vanishing coefficient in the power series
\eqref{eq:H} is \emph{positive}.
\end{remark}

Differentiating \eqref{eq:H}, we get the standard formulas for the
power sums
\begin{align}\label{eq:H'}
\sum_{k=1}^\infty k a_k\myp u^k &=u\myp H'_0(u),\\
\label{eq:H''} \sum_{k=1}^\infty k^2 a_k\myp u^k &=u\myp\bigl(u\myp
H'_0(u)\bigr)'=u^2 H''_0(u)+u\myp H'_0(u),
\end{align}
with similar expressions available for the higher-order sums
$\sum_{k=1}^\infty k^q a_k u^k$ \myp($q\in\NN$).

For $s\in\CC$ such that $\sigma:=\Re\myp(s)>0$, consider the
Dirichlet series
\begin{equation}\label{eq:A2-}
A(s):=\sum_{k=1}^\infty \frac{a_{k}}{k^s}\myp,\qquad
A^+(\sigma):=\sum_{k=1}^\infty \frac{|a_{k}|}{k^\sigma}\myp,
\end{equation}
where $a_k$'s are the coefficients in the power series expansion of
$H_0(u)$ (see \eqref{eq:H}). Although some of the coefficients $a_k$
may be negative, it turns out that the quantity
$A(1)=\sum_{k=1}^\infty a_k\mypp k^{-1}$, whenever it is finite,
cannot vanish or take a negative value.
\begin{lemma}\label{lm:A(1)>0}
If\/ $A^+(1)<\infty$ then\/ $0<A(1)<\infty$ and the following
equality holds,
\begin{equation}\label{eq:kappa1}
A(1)=\int_0^1\myn u^{-1}\myn H_0(u)\,\dif{u}.
\end{equation}
In particular, the integral in \eqref{eq:kappa1} is convergent.
\end{lemma}
\begin{proof}
From the assumptions on the coefficients $c_k$'s in the expansion
\eqref{eq:F0}, it is evident that for all $u\in(0,1)$ we have
$\mathcal{F}_0(u)=1+\sum_{k=1}^\infty c_k\myp u^k>1$, and hence
$H_0(u)=\ln\myn(\mathcal{F}_0(u))>0$. Furthermore, substituting the
expansion \eqref{eq:H} for $H_0(u)$ and integrating term by term
(which is permissible for power series inside the interval of
convergence), we get for any $s\in(0,1)$
\begin{equation*}
\int_0^s\myn u^{-1}\myn H_0(u)\,\dif{u}=\sum_{k=1}^\infty a_k
\int_0^s\myn u^{k-1}\mypp\dif{u}=\sum_{k=1}^\infty \frac{a_k\myp
s^k}{k}\myp.
\end{equation*}
Passing here to the limit as $s\uparrow1$ and applying to the
right-hand side Abel's theorem on the boundary value of a power
series (see \cite[\S1.22, pp.\:9--10]{TitchTF}), we obtain the
identity~\eqref{eq:kappa1}.
\end{proof}

The quantity $A(1)$ will play a major role in our argumentation; in
particular, it is involved in a suitable calibration of the ``free''
parameter $z$ in the definition \eqref{Q} of the measure $\QQ_z$
(see Section \ref{sec3.1} below).

\subsection{Cumulants of the part counts}\label{sec2.2}

Let us now turn to the random variables $\nu_\ell$ (i.e., the counts
of parts $\ell\in\NN$ in a partition $\lambda\in\CP$). Under the
probability measure $\QQ_z$ (see~\eqref{Q}), the characteristic
function of $\nu_\ell$ is given by\myp\footnote{\ For notational
simplicity, we suppress the dependence on $z$, which should cause no
confusion.}
\begin{equation}\label{eq:c.f.}
\varphi_{\nu_\ell}(t):=\EE_z(\rme^{\myp\rmi
t\nu_\ell})=\frac{\mathcal{F}_0(z^\ell\myp\rme^{\myp\rmi
t})}{\mathcal{F}_0(z^\ell)}\myp, \qquad t\in\RR\myp.
\end{equation}
Hence, the (principal branch of the) logarithm of
$\varphi_{\nu_\ell}(t)$ is expanded using \eqref{eq:H} as
\begin{equation}\label{eq:ln_c.f.}
\ln\myn (\varphi_{\nu_\ell}(t))=H_0(z^\ell\myp\rme^{\myp\rmi
t})-H_0(z^\ell) = \sum_{k=1}^\infty a_k\myp(\rme^{\myp\rmi
kt}-1)\myp z^{k\ell},\qquad t\in\RR\myp.
\end{equation}

For $q\in\NN$, denote by
$m_q[\myp\nu_\ell\myp]:=\EE_{z}(\nu_\ell^{q})$ the \emph{moments} of
the random variable $\nu_\ell$ about zero, and let
$\varkappa_q[\myp\nu_\ell\myp]$ be the \emph{cumulants}, or
\emph{semi-invariants} of $\nu_\ell$ (see, e.g., \cite[\S\myp3.12,
p.\:69]{KS}), defined by the following formal identity in
indeterminant $t$,
\begin{equation}\label{eq:varkappa}
\ln \EE_z(\rme^{\myp\rmi t\nu_\ell})=\sum_{q=1}^\infty \frac{(\rmi
t)^q}{q!}\myp\varkappa_q[\myp\nu_\ell\myp].
\end{equation}
From \eqref{eq:varkappa} it is easy to see (e.g., by taking the
derivative at $t=0$) that the expected value of $\nu_\ell$ coincides
with its first-order cumulant (see \cite[\S\myp3.14, Eq.\,(3.37),
p.\:71]{KS}),
\begin{equation}\label{eq:m1}
\EE_{z}(\nu_\ell)=m_1[\myp\nu_\ell\myp]=\varkappa_1[\myp\nu_\ell\myp].
\end{equation}
Let us also point out the standard expressions for the first few
\emph{central} moments (including the variance) through the
cumulants (see \cite[\S\myp3.14, Eq.\,(3.38), p.\:72]{KS}),
\begin{align}
\label{eq:m2} \Var(\nu_\ell)=
\EE_z\myn\bigl[(\nu_\ell-m_1[\myp\nu_\ell\myp])^2\bigr]&=
\varkappa_2[\myp\nu_\ell\myp],\\[.15pc]
\label{eq:m3}
\EE_z\myn\bigl[(\nu_\ell-m_1[\myp\nu_\ell\myp])^3\bigr]&=\varkappa_3[\myp\nu_\ell\myp],\\
\label{eq:m4}
\EE_z\myn\bigl[(\nu_\ell-m_1[\myp\nu_\ell\myp])^4\bigr]&=\varkappa_4[\myp\nu_\ell\myp]
+ 3\myp(\varkappa_2[\myp\nu_\ell\myp])^2.
\end{align}

\begin{remark}\label{rm:cumX}
The cumulants $\varkappa_q[X]$ of any random variable $X$ are
defined similarly to \eqref{eq:varkappa}; needless to say, the
formulas analogous to \eqref{eq:m1}\,--\,\eqref{eq:m4} also hold
true in the general case (as long as the corresponding moments
exist).
\end{remark}

The next lemma will be instrumental in our analysis.
\begin{lemma}\label{lm:kappa(q)}
The cumulants $\varkappa_q[\myp\nu_\ell\myp]$ are given by
\begin{equation}\label{eq:cumulants}
\varkappa_q[\myp\nu_\ell\myp]=\sum_{k=1}^\infty k^q a_k\myp
z^{k\ell},\qquad q\in\NN.
\end{equation}
In particular,
\begin{equation}\label{eq:m1[nu]}
m_1[\myp\nu_\ell\myp] =\sum_{k=1}^\infty k\myp a_k z^{k\ell}.
\end{equation}
\end{lemma}

\begin{proof}
Taylor expanding the exponential function in \eqref{eq:ln_c.f.}, we
get
\begin{equation}\label{eq:ln(g)}
\ln\myn(\varphi_{\nu_\ell}(t))=\sum_{k=1}^\infty a_k z^{k\ell}
\sum_{q=1}^\infty \frac{(\rmi kt)^q}{q!}
 =\sum_{q=1}^\infty
\frac{(\rmi t)^q}{q!}\sum_{k=1}^\infty k^q a_k\myp z^{k\ell},
\end{equation}
where the interchange of the order of summation in the double series
\eqref{eq:ln(g)} is justified by its absolute convergence. Now, by a
comparison of the expansion \eqref{eq:ln(g)} with the identity
\eqref{eq:varkappa}, the formulas \eqref{eq:cumulants} for the
coefficients $\varkappa_q[\myp\nu_\ell\myp]$ readily follow.
\end{proof}

By virtue of the expression \eqref{eq:m1[nu]} for the expected value
of $\nu_\ell$\myp, it is easy to obtain a formula for the
expectation of $N_\lambda=\sum_{\ell=1}^\infty
\ell\myp\nu_\ell$\myp,
\begin{equation}\label{eq:m1[N]}
\EE_z(N_\lambda)=\sum_{\ell=1}^\infty \ell\mypp
m_1[\myp\nu_\ell\myp]=\sum_{\ell=1}^\infty \ell \sum_{k=1}^\infty
k\myp a_k z^{k\ell}.
\end{equation}
More generally, the expressions \eqref{eq:cumulants} for the
cumulants $\varkappa_q[\myp\nu_\ell\myp]$ furnish a representation
of the cumulants of $N_\lambda$ of any order; namely, using the
rescaling relation $\varkappa_q[\myp\ell
\nu_\ell\myp]=\ell^q\varkappa_q[\myp\nu_\ell\myp]$ (see
\cite[\S\myp3.13, p.\:70]{KS}) and the additivity property of the
cumulants for independent summands (see \cite[\S\myp7.18,
pp.\:\allowbreak201--202]{KS}), we obtain
\begin{equation}\label{eq:cumulants-xi}
\varkappa_q[N_\lambda]=\sum_{\ell=1}^\infty \ell^{\myp q}
\varkappa_q[\myp\nu_\ell\myp]=\sum_{\ell=1}^\infty \ell^{\myp q}
\sum_{k=1}^\infty k^q a_k\myp z^{k\ell},\qquad q\in\NN.
\end{equation}

Similarly, recalling that the upper boundary $Y_\lambda(x)$ of the
Young diagram $\Upsilon_{\myn\lambda}$ is given by the formula
\eqref{eq:Young}, we obtain for any $x\ge0$
\begin{equation}\label{varkappa_zxi*}
\varkappa_q[Y_\lambda(x)]=\sum_{\ell\ge x}
\varkappa_q[\myp\nu_\ell\myp]=\sum_{\ell\ge x} \sum_{k=1}^\infty k^q
a_{k}\myp z^{k\ell},\qquad q\in\NN,
\end{equation}
and in particular (with $q=1$)
\begin{equation}\label{eq:E(t)}
\EE_{z}[Y_\lambda(x)]=\sum_{\ell\ge x} m_1[\myp\nu_\ell\myp] =
\sum_{\ell\ge x}\sum_{k=1}^{\infty} k a_{k} z^{k\ell}.
\end{equation}

\subsection{Estimates for power-exponential sums}\label{sec4.1}

In what follows, we frequently encounter power-exponential sums of
the form
\begin{equation}\label{eq:q}
S_q(t):=\sum_{\ell=1}^\infty \ell^{\myp q-1}\rme^{-t \ell},\qquad
t>0.
\end{equation}

\begin{lemma}\label{lm:6.2}
For\/ $q\in\NN$, the function\/ $S_q(t)$ admits the representation
\begin{equation}\label{eq:S}
S_q(t)=\sum_{j=1}^{q} c_{j,\myp q}\,\frac{\rme^{-t
j}}{(1-\rme^{-t})^j}\myp,\qquad t>0,
\end{equation}
with some constants\/ $c_{j,\myp q}>0$
\,\textup{(}$j=1,\dots,q$\textup{)}\myp\textup{;} in particular,
$c_{q,\myp q}=(q-1)!$\mypp.
\end{lemma}

\begin{proof} If $q=1$ then the expression \eqref{eq:q} is reduced to
a geometric series
\begin{align*}
S_1(t)=\sum_{\ell=1}^\infty \rme^{-t
\ell}=\frac{\rme^{-t}}{1-\rme^{-t}}\myp,
\end{align*}
which is a particular case of \eqref{eq:S} with $c_{1,1}=1$. Assume
now that \eqref{eq:S} is valid for some $q\ge1$. Then,
differentiating the identities \eqref{eq:q} and \eqref{eq:S} with
respect to $t$, we obtain
\begin{align*}
S_{q+1}(t)=-S'_q(t)&=\sum_{j=1}^{q} c_{j,\myp q}\left(\frac{j\mypp
\rme^{-t j}}{(1-\rme^{-t})^j}+ \frac{j\mypp\rme^{-t
(j+1)}}{(1-\rme^{-t})^{j+1}}\right)\\
&=\sum_{j=1}^{q+1} c_{j,\myp q+1}\frac{\rme^{-t
j}}{(1-\rme^{-t})^j}\myp,
\end{align*}
where we set
\begin{equation*}
  c_{j,\myp{}q+1}:=\left\{\begin{array}{ll}
  c_{1,\myp{}q}\myp,&j=1,\\[.2pc]
  j\myp c_{j,\myp{}q}+(j-1)\mypp c_{j-1,\myp{}q}\myp,\ \ &2\le j\le q,\\[.2pc]
  q\mypp c_{q,\myp{}q}\myp,& j=q+1.
  \end{array}
  \right.
\end{equation*}
In particular, $c_{q+1,\myp q+1}=q\mypp
c_{q,\myp{}q}=q\myp(q-1)!=q!$\myp. Thus, the formula \eqref{eq:S}
holds for $q+1$ and hence, by induction, for all $q\ge1$.
\end{proof}

\begin{lemma}\label{lm:exp<C}
For any\/ $q>0$, there is a constant\/ $C_q>0$ such that
\begin{equation}\label{eq:exp_bound}
\frac{\rme^{-t}}{(1-\rme^{-t})^q} \le C_q\mypp t^{-q},\qquad t>0.
\end{equation}
\end{lemma}
\begin{proof}
Set $f(t):=t^q\mypp\rme^{-t}(1-\rme^{-t})^{-q}$ and note that
\begin{equation*}
\lim_{t\to0^+}f(t)=1,\qquad \lim_{t\to +\infty} f(t)=0.
\end{equation*}
By continuity, the function $f(t)$ is bounded on $(0,\infty)$, and
the inequality (\ref{eq:exp_bound}) follows.
\end{proof}

\begin{lemma}\label{lm:C_k}
\textup{(a)} \,For any $q\in\NN$, there is a constant
$\tilde{C}_q>0$ such that
\begin{equation}\label{eq:Cc}
S_q(t)\le \tilde{C}_q \mypp t^{-q},\qquad t>0.
\end{equation}

\textup{(b)} \,Moreover{},
\begin{equation}\label{eq:Sq}
S_q(t)\sim \frac{(q-1)!}{t^q}\myp,\qquad t\to 0^+.
\end{equation}
\end{lemma}

\begin{proof} (a) \,Observe, using Lemma \ref{lm:exp<C}, that for $j=1,\dots,q$
\begin{equation*}
\frac{\rme^{-tj}}{(1-\rme^{-t})^j}\le\frac{\rme^{-t}}{(1-\rme^{-t})^q}\le
C_q\mypp t^{-q},\qquad t>0.
\end{equation*}
Substituting this inequality into \eqref{eq:S} and recalling that
the coefficients $c_{j,\myp q}$ are positive, we obtain the bound
\eqref{eq:Cc} with $\tilde{C}_q:= C_q\sum_{j=1}^q c_{j,\myp q}>0$.

\smallskip
(b) \,For each term in the expansion \eqref{eq:S} we have $\rme^{-t
j}\myp(1-\rme^{-t})^{-j}\sim t^{-j}$ as $t\to0^+$. Hence, the
overall asymptotic behavior of $S_q(t)$ is determined by the term
with $j=q$ and the corresponding coefficient $c_{q,\myp q}=(q-1)!$
(see Lemma~\ref{lm:6.2}), and the formula \eqref{eq:Sq} follows.
\end{proof}

\section{Asymptotics of the expectation}\label{sec3}

\subsection{Calibration of the parameter $z$}\label{sec3.1}
Our aim is to find a suitable parameter $z=z_n\in(0,1)$ in the
definition \eqref{Q1} of the probability measure $\QQ_z$, subject to
the asymptotic condition
\begin{equation}\label{calibr1}
\EE_{z}(N_\lambda)\sim n,\qquad n\to\infty,
\end{equation}
where $N_\lambda=\sum_{\ell=1}^\infty \ell\myp\nu_\ell$\myp. To this
end, let us seek $z$ in the form
\begin{equation}\label{alpha}
z=\rme^{-\alpha},\qquad \alpha=\alpha_n:=\gamma\mypp n^{-1/2},
\end{equation}
where the constant $\gamma>0$ is to be fitted. Hence, the formula
\eqref{eq:m1[N]} takes the form
\begin{equation}\label{eq:E(N)}
\EE_{z}(N_\lambda) =\sum_{\ell=1}^\infty \ell \sum_{k=1}^\infty k
a_{k}\myp \rme^{-k\alpha\ell}.
\end{equation}

Let us state our main result in this section.
\begin{theorem}\label{th:kappa}
Suppose that $A^+(1)<\infty$. Then, under the parameterization
\eqref{alpha}, the asymptotic condition\/ \eqref{calibr1} is
satisfied with the choice
\begin{equation}\label{eq:kappa}
\gamma=\sqrt{A(1)}>0.
\end{equation}
\end{theorem}

\begin{proof}
By Lemma \ref{lm:A(1)>0}, we know that $A(1)>0$ and hence the
inequality \eqref{eq:kappa} holds true.

Let us now investigate the asymptotics of the expectation
$\EE_z(N_\lambda)$ under the parameterization $z=\rme^{-\alpha}$
with $\alpha\to0^+$ (cf.~\eqref{alpha}). Interchanging the order of
summation in \eqref{eq:E(N)} and using the notation \eqref{eq:q}, we
obtain
\begin{equation}\label{E_i'}
\EE_z(N_\lambda)=\sum_{k=1}^\infty k a_{k} \sum_{\ell=1}^\infty
\ell\mypp \rme^{-k\alpha\ell}= \sum_{k=1}^\infty k a_{k}\myp
S_2(k\alpha).
\end{equation}
According to Lemma \ref{lm:C_k}\myp(b) (with $q=2$),\footnote{\ This
can also be seen directly, without Lemma~\ref{lm:C_k}, from the
explicit expression $S_2(t)=\rme^{-t}(1-\rme^{-t})^{-2}$.} for each
$k\in\NN$ we have $S_2(k\alpha)\sim (k\alpha)^{-2}$ as
$\alpha\to0^+$. Moreover, by Lemma \ref{lm:C_k}\myp(a) the general
summand in the series \eqref{E_i'} is bounded, uniformly in $k$, by
$O(\alpha^{-2})|a_k|\myp k^{-1}$, which is a term of a convergent
series since $A^+(1)<\infty$ by the theorem's hypothesis; in
particular, this justifies the above interchange of the order of
summation. Hence, by Lebesgue's dominated convergence theorem we
obtain from \eqref{E_i'}
\begin{equation}\label{zeta32}
\lim_{\alpha\to0^+}\alpha^{2} \EE_{z}(N_\lambda) =\sum_{k=1}^\infty
k a_{k}\myp \lim_{\alpha\to0^+} \alpha^2
S_2(k\alpha)=\sum_{k=1}^\infty \frac{a_{k}}{k}=A(1).
\end{equation}
Thus, putting $\alpha=\gamma\mypp n^{-1/2}$ with
$\gamma=\sqrt{A(1)}$ \,(see~\eqref{eq:kappa}), the limit
\eqref{zeta32} is reduced to \eqref{calibr1}.
\end{proof}

The expression \eqref{eq:kappa1} for $A(1)$ directly in terms of the
generating function $H_0(u)$ is sometimes useful (e.g., for computer
calculations of the coefficient $\gamma=\sqrt{A(1)}$\mypp, see
Example \ref{ex:6} in Section~\ref{sec6}; \,cf.\ also the
formulation of Theorem~\ref{th:main1} in the Introduction).

\begin{assumption}\label{as:z}
Throughout the rest of the paper, we assume that $A^+(1)<\infty$ and
the parameter $z$ is chosen according to the formulas \eqref{alpha}
with $\gamma>0$ defined by \eqref{eq:kappa}.
\end{assumption}

\begin{remark} Under Assumption \ref{as:z} the measure $\QQ_z$ becomes
dependent on $n$, as well as the $\QQ_z$-probabilities and the
corresponding expected values.
\end{remark}

\subsection{The ``expected'' limit shape}\label{sec3.2}

\begin{theorem}\label{th:3.2}
For any $\delta>0$, we have uniformly in\/ $x\in[\delta,\infty)$
\begin{equation}\label{sh}
\EE_{z}\myn\bigl[Y_\lambda(x\myp n^{1/2})\bigr]=n^{1/2}\mypp
\omega^*(x) + O(1),\qquad n\to\infty,
\end{equation}
where the limit shape function $\omega^*(x)$ is defined
in~\eqref{eq:omega}.
\end{theorem}

\begin{proof}
Setting $\ell^*\mynn=\ell_n^*:=\lceil x\myp n^{1/2}\rceil$, in view
of \eqref{alpha} we have
\begin{equation}\label{eq:0<alpha*ell<}
0\le \alpha\myp\ell^*\mynn-\gamma x< \alpha,\qquad n\in\NN,
\end{equation}
and hence, uniformly in $x$,
\begin{equation}\label{eq:alpha*ell}
\alpha\myp\ell^*\mynn\to \gamma x, \qquad n\to\infty.
\end{equation}
With this notation, from \eqref{eq:E(t)} we have for $x>0$
\begin{align}\label{Z1}
\gamma\mypp n^{-1/2}\EE_{z}\myn\bigl[Y_\lambda(x\myp n^{1/2})\bigr]&
=\alpha\sum_{\ell=\ell^*}^\infty \sum_{k=1}^\infty k a_{k}\myp
\rme^{-k\alpha\ell} = \alpha\sum_{\ell=\ell^*}^\infty
g_0(\alpha\ell),
\end{align}
where (see \eqref{eq:H'})
\begin{equation}\label{eq:phi}
g_0(t):=\sum_{k=1}^\infty k a_{k}\myp \rme^{-k t}=\rme^{-t}
H'_0(\rme^{-t}), \qquad t>0.
\end{equation}
Note that (cf.~\eqref{eq:H''})
\begin{equation}\label{eq:phi'}
g_0^{\myp\prime}(t)=-\sum_{k=1}^\infty k^2 a_{k}\myp \rme^{-k
t}=-\left\{\rme^{-2t} H''_0(\rme^{-t})+\rme^{-t}
H'_0(\rme^{-t})\right\},\qquad t>0.
\end{equation}

The right-hand side of \eqref{Z1} can be viewed as a Riemann
integral sum for $g_0(t)$ (over $[\hspace{.03pc}\gamma x,\infty)$
with mesh size $\alpha\to0^+$), suggesting its convergence to the
corresponding integral as $n\to\infty$. More precisely, noting that
$g_0(t)$ is continuously differentiable on $(0,\infty$) and
$g_0(\infty)=0$, by Euler--Maclaurin's summation formula (see, e.g.,
\cite[\S12.2]{Cramer}) we get
\begin{equation}\label{EM0}
\sum_{\ell=\ell^*}^\infty g_0(\alpha\myp\ell)=
\int_{\ell^*}^\infty\! g_0(\alpha\myp t)\,\dif{t} + \tfrac12\mypp
g_0(\alpha\myp\ell^*)+\alpha\myn\int_{\ell^*}^\infty \!
B_1(t)\,g_0^{\myp\prime}(\alpha\myp t)\,\dif{t},
\end{equation}
where $B_1(t):=t-\lfloor t\rfloor -\frac12$ \,($t\in\RR$).
Furthermore, observing that the term $\tfrac12\mypp
g_0(\alpha\myp\ell^*)$ can be included in the last integral yields a
shorter form of \eqref{EM0},
\begin{equation}\label{EM1}
\sum_{\ell=\ell^*}^\infty g_0(\alpha\myp\ell)=
\int_{\ell^*}^\infty\! g_0(\alpha\myp t)\,\dif{t} +
\alpha\myn\int_{\ell^*}^\infty
\mynn\tilde{B}_1(t)\,g_0^{\myp\prime}(\alpha\myp t)\,\dif{t},
\end{equation}
with $\tilde{B}_1(t):=B_1(t)-\frac12\equiv t-\lfloor t\rfloor -1$
\,($t\in\RR$).

\smallskip
For the first integral in \eqref{EM1}, on substituting
\eqref{eq:phi} and using \eqref{eq:alpha*ell} we obtain
\begin{equation}\label{eq:H2}
\int_{\ell^*}^\infty\mynn g_0(\alpha\myp t)\,\dif{t}
=\int_{\ell^*}^\infty \mynn\rme^{-\alpha t} H'_0(\rme^{-\alpha
t})\,\dif{t}
 = \alpha^{-1} H_0(\rme^{-\alpha\ell^*}\myn)\sim \alpha^{-1}
H_0(\rme^{-\gamma x})
\end{equation}
uniformly in $x\ge\delta$, since by Lagrange's mean value theorem,
on account of \eqref{eq:0<alpha*ell<} and~\eqref{eq:phi},
$$
|H_0(\rme^{-\alpha\ell^*}\myn)-H_0(\rme^{-\gamma x})|\le
(\alpha\myp\ell^*\mynn-\gamma x)
\myp\max_{t\ge\gamma\delta}\myn|g_0(t)| =O(\alpha).
$$
Next, noting that $\sup_{t\in\RR}\myn|\tilde{B}_1(t)|\le1$,
recalling that $\alpha\myp\ell^*\mynn\ge \gamma x$ (see
\eqref{eq:0<alpha*ell<}) and substituting \eqref{eq:phi}, the last
term in \eqref{EM1} is bounded in absolute value, again uniformly in
$x\ge \delta$, by
\begin{equation}\label{eq:H1}
\alpha\myn\int_{\ell^*}^\infty\myn |g_0^{\myp\prime}(\alpha\myp
t)|\,\dif{t}\le \int_0^{\myp\rme^{-\gamma\delta}}\!|u
H''_0(u)+H'_0(u)|\,\dif{u}=O(1),
\end{equation}
where we used the expression \eqref{eq:phi'} and the change of
variables $u=\rme^{-\alpha t}$. Thus, substituting the estimates
\eqref{eq:H2}, \eqref{eq:H1} into \eqref{EM1} and returning to
\eqref{Z1}, we obtain
$$
\lim_{n\to\infty} n^{-1/2}\EE_{z}\myn\bigl[Y_\lambda(x\myp
n^{1/2})\bigr]=\gamma^{-1} H_0(\rme^{-\gamma x})\equiv\omega^*(x),
$$
where the convergence is uniform in $x\ge\delta$, as claimed.
\end{proof}

\begin{remark}\label{rm:H(1)} As was mentioned in
Remark~\ref{rm:1.3}, the asymptotic formula \eqref{sh} may be
extended, with obvious adjustments of the proof, to the case $x=0$
including the uniform convergence in $x\ge0$ --- \emph{provided
that} \myp$\omega^*(0)<\infty$ (more precisely, if the function
$H_0(u)$ and its first two derivatives are finite at $u=1$).
\end{remark}

\subsection{Refined asymptotics of the expectation of $N_\lambda$}\label{sec3.3}

We need to sharpen the asymptotics $\EE_z(N_\lambda)-n=o(n)$
provided by Theorem \textup{\ref{th:kappa}} (see~\eqref{calibr1}).
The aim of this section is to prove the following refinement.
\begin{theorem}\label{th:4.1}
Under the condition $A^+(\sigma)<\infty$ with some\/
$\sigma\in(0,1)$, we have
$$
\EE_{z}(N_\lambda)-n=O\bigl(n^{(\sigma+1)/2}\bigr),\qquad
n\to\infty.
$$
\end{theorem}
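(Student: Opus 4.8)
The plan is to subtract from each summand of the exact formula (\ref{E_i'}) its leading small-$\alpha_n$ behaviour and then estimate the remainder. The point is that the calibration $\kappa=\sqrt{A(1)}$ was designed precisely so that the ``continuum'' part of (\ref{E_i'}) equals $n$ exactly. Writing $t=k\alpha_n$ and introducing
\[
g(t):=\frac{\rme^{-t}}{(1-\rme^{-t})^2}-\frac1{t^2}=\frac1{4\sinh^2(t/2)}-\frac1{t^2},\qquad t>0,
\]
I would decompose $\dfrac{\rme^{-k\alpha_n}}{(1-\rme^{-k\alpha_n})^2}=(k\alpha_n)^{-2}+g(k\alpha_n)$. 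Summing the first piece gives $\alpha_n^{-2}\sum_{k}a_k/k=A(1)/\alpha_n^2=n$, exactly as in (\ref{zeta32}); the term-by-term rearrangement is legitimate because $A^+(1)<\infty$, which is in force by the calibration (\ref{eq:kappa}) underlying Assumption~\ref{as:z} (cf.\ Theorem~\ref{th:kappa}). Consequently the problem reduces to estimating the single remainder series
\[
\EE_{z}(N_\lambda)-n=\sum_{k=1}^\infty k\myp a_k\myp g(k\alpha_n).
\]

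The key analytic input is a uniform two-sided control of $g$. A short Taylor computation shows $g(t)\to-\tfrac1{12}$ as $t\downarrow0$, while $t^2g(t)\to-1$ as $t\to\infty$ (the hyperbolic term being exponentially small there). Since $(1+t^2)\myp|g(t)|$ is continuous on $(0,\infty)$ with finite limits at both ends, it is bounded, so that
\[
|g(t)|\le \frac{C}{1+t^2},\qquad t>0,
\]
for some constant $C>0$. Hence $|\EE_{z}(N_\lambda)-n|\le C\myp S$, where $S:=\sum_{k\ge1} k\myp|a_k|\,\bigl(1+(k\alpha_n)^2\bigr)^{-1}$.

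It then remains to bound $S$ using the hypothesis $A^+(\eta)<\infty$, which I would do by splitting the sum at $k\approx\alpha_n^{-1}$. For $k\le\alpha_n^{-1}$ I drop the denominator ($\le1$) and write $k\myp|a_k|=k^{1+\eta}\,|a_k|\myp k^{-\eta}$ with $k^{1+\eta}\le\alpha_n^{-(1+\eta)}$, so this part is at most $\alpha_n^{-(1+\eta)}A^+(\eta)$. For $k>\alpha_n^{-1}$ I use $\bigl(1+(k\alpha_n)^2\bigr)^{-1}\le(k\alpha_n)^{-2}$, whence the generic term is $\alpha_n^{-2}\,|a_k|\myp k^{-1}=\alpha_n^{-2}\,(|a_k|\myp k^{-\eta})\,k^{\eta-1}$; since $\eta\le1$ and $k>\alpha_n^{-1}$ force $k^{\eta-1}\le\alpha_n^{1-\eta}$, this part too is at most $\alpha_n^{-(1+\eta)}A^+(\eta)$. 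Therefore $S=O(\alpha_n^{-(1+\eta)})=O(n^{(1+\eta)/2})$ by (\ref{alpha}), which is exactly the asserted $O(n^{1/2+\eta/2})$.

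The only delicate point — and hence the main obstacle — is the uniform estimate of $g$, together with the fact that the tail estimate needs $\eta\le1$. This is not a genuine restriction: because $A^+$ is nonincreasing and $A^+(1)<\infty$ holds throughout (Theorem~\ref{th:kappa}, Assumption~\ref{as:z}), the exponent in the hypothesis may always be taken to be $\le1$; a value $\eta>1$ yields, via the case $\eta=1$, only the weaker bound $O(n)\subseteq O(n^{1/2+\eta/2})$, so nothing is lost. Everything else is the routine splitting of an absolutely convergent series.
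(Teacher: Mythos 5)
Your proof is correct, but it follows a genuinely different and more elementary route than the paper. The paper also starts from the identity $\EE_{z}(N_\lambda)-n=\sum_{k}a_k\myp\Delta_f(k)$ with $f(x)=x\myp\rme^{-\alpha_n x}$ (its (\ref{dif1}) is exactly your remainder series, since $\Delta_f(k)=k\myp g(k\alpha_n)$), but it then estimates the remainder via the Mellin transform: $\widehat{F}(s)=\alpha_n^{-s-1}\zeta(s)\Gamma(s+1)$, the M\"untz lemma, the inversion formula, and a contour shift from $\Re s=c\in(0,1)$ to $\Re s=\eta$, controlled by the growth estimates (\ref{GFE}) for $\zeta$ and $\Gamma$ in vertical strips. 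You replace all of that machinery by the pointwise bound $|g(t)|\le C(1+t^2)^{-1}$ (correctly justified by the limits $g(0+)=-\tfrac1{12}$ and $t^2g(t)\to-1$) followed by splitting the sum at $k\asymp\alpha_n^{-1}$; both regimes give $O(\alpha_n^{-1-\eta}A^+(\eta))$, matching the paper's bound exactly. Your reduction to $\eta\le 1$ is legitimate and in fact mirrors the paper, whose own contour argument silently restricts to $\eta\in(0,1)$; and $A^+(1)<\infty$ is indeed in force via Assumption~\ref{as:z}, so the exact cancellation $\sum_k a_k/(k\alpha_n^2)=n$ is valid. What the Mellin route buys in exchange for its heavier apparatus is systematic access to further terms of the asymptotic expansion (by picking up residues at additional poles) and a template that adapts when the remainder is not uniformly $O(t^{-2})$; your argument is shorter, self-contained, and fully sufficient for the stated estimate.
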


\subsubsection{Preliminaries.}
For the proof of Theorem \ref{th:4.1}, some preparations are
required. Let $\psi(x)$ be a continuous function on $\RR_+$ such
that $\limsup_{x\to\infty} x^{\beta}\psi(x)<\infty$ with some
$\beta>1$, which ensures that $\psi(x)$ is integrable on $\RR_+$. It
is easy to see that the series
\begin{equation}\label{F0}
\varPsi(h):=\sum_{\ell=1}^\infty \psi(\ell h),\qquad h>0,
\end{equation}
is absolutely convergent, and moreover
\begin{equation}\label{beta}
\varPsi(h)=O(1) \sum_{\ell=1}^\infty (\ell
h)^{-\beta}=O(h^{-\beta}),\qquad h\to\infty.
\end{equation}
Let us also assume that
\begin{equation}\label{eq:h->0}
\varPsi(h)=O(h^{-1}),\qquad h\to 0^+.
\end{equation}

\begin{remark}\label{rm:direct_Riemann} Note that $h\mypp \varPsi(h)
=h\sum_{\ell=1}^\infty \psi(\ell h)$ is a Riemann integral sum for
the function $\psi(x)$ over $\RR_+$ with mesh size $h$, so typically
(including a specific example emerging in the proof of Theorem
\ref{th:4.1}) it will converge, as $h\to0^+$, to the (finite)
integral $\int_0^\infty\myn \psi(x)\,\dif{}x$,\footnote{\ Functions
$\psi(x)$ satisfying this property are called \emph{directly Riemann
integrable} (see \cite[Ch.\,XI, \S1, p.\:362]{Feller2}).} thus
automatically ensuring the bound \eqref{eq:h->0}. A sufficient
condition for such a convergence, which can be verified by using
Euler--Maclaurin's summation formula similar to \eqref{EM1}, is that
$\psi(x)$ be continuously differentiable and the derivative
$\psi'(x)$ absolutely integrable on~$\RR_+$\myp.
\end{remark}

Let us now consider the \emph{Mellin transform} of $\varPsi(h)$
(see, e.g., \cite[Ch.\,VI, \S\myp9]{Widder})
\begin{equation}\label{Mel}
\widehat{\varPsi}(s):=\int_0^\infty \!
h^{s-1}\mypp\varPsi(h)\,\dif{}h\myp, \qquad 1<\Re\myp(s)<\beta.
\end{equation}
On substituting \eqref{F0} into \eqref{Mel} we find
\begin{align}
\notag \widehat{\varPsi}(s)=\int_0^\infty\!
h^{s-1}\sum_{\ell=1}^\infty \psi(\ell h)\,\dif{}h
&=\sum_{\ell=1}^\infty \int_0^\infty\! h^{s-1}\myp \psi(\ell
h)\:\dif{}h
\\
&=\sum_{\ell=1}^\infty \ell^{-s}\int_0^\infty\!
x^{s-1}\myp\psi(x)\,\dif{}x \label{Mel1}
=\zeta(s)\myn\int_0^\infty\! x^{s-1}\myp\psi(x)\,\dif{x}\myp,
\end{align}
where $\zeta(s)\myn=\mynn\sum_{\ell=1}^{\vphantom{y}\infty}\myn{}
\ell^{-s}$ is the Riemann zeta function. The interchange of
summation and integration in this computation is justified by the
absolute convergence of the integral on the right-hand side
of~\eqref{Mel1}. By the well-known properties of $\zeta(s)$ (see,
e.g., \cite[\S\myp2.1, p.\,13]{TitchZF}), from \eqref{beta} and
\eqref{Mel} it follows that the function $\widehat{\varPsi}(s)$ is
meromorphic in the strip \,$0<\Re\myp(s)<\beta$, with a single pole
at\/ $s=1$. Set
\begin{equation}\label{Delta}
\Delta_{\psi}(h):=\varPsi(h)-\frac{1}{h}\int_{0}^\infty\myn
\psi(x)\,\dif{}x, \qquad h>0.
\end{equation}
Then the M\"untz lemma (see \cite[\S\myp2.11,
pp.\:\allowbreak28--29]{TitchZF}) gives
\begin{equation*}
\widehat{\varPsi}(s)=\int_0^\infty\myn
h^{s-1}\Delta_{\psi}(h)\,\dif{}h\myp,\qquad 0<\Re\myp(s)<1,
\end{equation*}
and the inversion formula for the Mellin transform (see, e.g.,
\cite[Ch.\,VI, \S\myp9, Theorem~9a,
pp.\:\allowbreak246--247]{Widder}) implies
\begin{equation}\label{eq:inverse}
\Delta_{\psi}(h)=\frac{1}{2\pi\myp\rmi}\int_{c-\rmi\infty}^{c+\rmi\infty}
h^{-s} \,\widehat{\varPsi}(s)\,\dif{s},\qquad 0<c<1.
\end{equation}

\subsubsection{Proof\/ of\/ Theorem\/ \textup{\ref{th:4.1}}.}\ With the representation
\eqref{eq:inverse} at hand, set $\psi(x):=x\myp\rme^{- \alpha x}$,
then the series \eqref{F0} is explicitly given by
\begin{equation}\label{eq:F(h)}
\varPsi(h)=h\sum_{\ell=1}^\infty \ell\mypp\rme^{- \alpha
h\ell}=\frac{h\mypp \rme^{-\alpha h}}{(1-\rme^{-\alpha
h})^2}\myp,\qquad h>0.
\end{equation}
Clearly, $\psi(x)=O(x^{-\beta})$ with any $\beta>0$, and from
\eqref{eq:F(h)} it is evident that $\varPsi(h)$ satisfies the
condition \eqref{beta} (with $\alpha$ fixed). Furthermore, the
formula \eqref{Mel1} for $\widehat{\varPsi}(s)$ is specialized to
\begin{equation}\label{eq:M_alpha}
\widehat{\varPsi}(s)=\zeta(s)\mynn\int_0^\infty\mynn
x^s\mypp\rme^{-\alpha x}\,\dif{x}=\alpha^{-s-1}
\zeta(s)\mypp\Gamma(s+1),\qquad 1<\Re\myp(s)<\infty,
\end{equation}
where $\Gamma(s)=\int_0^\infty\mynn x^{s-1} \rme^{-x}\,\dif{x}$ is
the gamma function. Since $\Gamma(s+1)$ is analytic for
$\Re\myp(s)>-1$ (cf.\ \cite[\S\myp4.41, p.\,148]{TitchTF}) and, as
already mentioned, $\zeta(s)$ has a single (simple) pole at point
$s=1$, it follows that the expression \eqref{eq:M_alpha} is
meromorphic in the half-plane $\Re\myp(s)>-1$, thus providing an
analytic continuation of the function $\widehat{\varPsi}(s)$ into
the strip $-1<\Re\myp(s)<1$.

Combining \eqref{E_i'} and \eqref{eq:F(h)} we get
\begin{equation}\label{E_z''}
\EE_{z}(N_\lambda)=\sum_{k=1}^\infty a_{k}\myp\varPsi(k).
\end{equation}
On the other hand, according to the notation \eqref{eq:A2-} and
Assumption~\ref{as:z} we have the identity
\begin{equation}\label{eq:int(f)}
\sum_{k=1}^\infty \frac{a_{k}}{k\myp\alpha^2}= \frac{n\myp
A(1)}{\gamma^2} \equiv n.
\end{equation}
Consequently, subtracting \eqref{eq:int(f)} from \eqref{E_z''} we
obtain the representation
\begin{equation}\label{dif1}
\EE_{z}(N_\lambda)-n=\sum_{k=1}^\infty
a_{k}\left(\varPsi(k)-\frac{1}{k\alpha^{2}}\right)=\sum_{k=1}^\infty
a_{k}\mypp\Delta_{\psi}\mynn(k),
\end{equation}
recalling the notation \eqref{Delta} and observing that
\begin{equation*}
\int_{0}^\infty\! \psi(x)\,\dif{}x= \int_0^\infty\mynn
x\mypp\rme^{-\alpha x}\,\dif{}x= \frac{1}{\alpha^{2}}\myp.
\end{equation*}

Furthermore, using the representation \eqref{eq:inverse} with
$c=\sigma\in(0,1)$ (see the hypothesis of the theorem) and
substituting the expression \eqref{eq:M_alpha}, we can rewrite
\eqref{dif1} in the form
\begin{align}
\notag \EE_{z}(N_\lambda)-n&=\frac{1}{2\pi\myp \rmi}
\sum_{k=1}^\infty a_{k} \int_{\sigma-\rmi\infty}^{\sigma+\rmi\infty}
\frac{\zeta(s)\mypp\Gamma(s+1)}{\alpha^{s+1}k^{s}}\,\dif{s}\\[.2pc]
\label{dif4} &=\frac{1}{2\pi}\int_{-\infty}^{\infty}
\frac{A(\sigma+\rmi t)\mypp\zeta(\sigma+\rmi
t)\mypp\Gamma(\sigma+1+\rmi t)}{\alpha^{\sigma+1+\rmi t}}\,\dif{}t,
\end{align}
using the change of variables $s=\sigma+\rmi\myp t$. To justify the
interchange of summation and integration deployed in \eqref{dif4},
note that
\begin{equation*}
|A(\sigma+\rmi t)|\le A^+(\sigma)<\infty,\qquad
|\alpha^{-\sigma-1-\rmi t}| \le \alpha^{-\sigma-1}\myp.
\end{equation*}
We can also use the following classical estimates as $t\to\infty$
(see \cite[Theorem~1.9, p.\:25]{Iv} and \cite[\S\myp4.42,
p.\,151]{TitchTF}, respectively),
\begin{equation*}
\zeta(\sigma+\rmi
t)=O\myp\bigl(|t|^{(1-\sigma)/2}\ln\myn(|t|+2)\bigr),\qquad
\Gamma(\sigma+1+\rmi t)=O\myp\bigl(|t|^{\sigma+1/2}\mypp
\rme^{-\pi|t|/2}\bigr).
\end{equation*}
Hence, the last integral in \eqref{dif4} is bounded, uniformly in
$n\in\NN$, by
\begin{equation}\label{eq:int<infty}
O(\alpha^{-\sigma-1})\int_{-\infty}^\infty\myn |t|^{1+\sigma/2}\mypp
\rme^{-\pi|t|/2}
\ln\myn(|t|+2))\,\dif{}t=O(\alpha^{-\sigma-1})<\infty,
\end{equation}
which validates the formula \eqref{dif4}.

Moreover, combining \eqref{dif4} and \eqref{eq:int<infty} we get, on
account of~\eqref{alpha},
\begin{align*}
\EE_{z}(N_\lambda)-n &=
O(\alpha^{-\sigma-1})=O\bigl(n^{(\sigma+1)/2}\bigr),\qquad
n\to\infty,
\end{align*}
and the proof of Theorem \ref{th:4.1} is complete.

\section{Asymptotic estimates for higher-order moments}\label{sec4}

\subsection{The cumulants of $N_\lambda$}\label{sec4.2}

Substituting $z=\rme^{-\alpha}$ (see~\eqref{alpha}) into the
formulas \eqref{eq:cumulants-xi} for the cumulants of $N_\lambda$,
we get
\begin{equation}\label{eq:cumulants-xi**}
\varkappa_q[N_\lambda]=\sum_{\ell=1}^\infty \ell^{\myp q}
\sum_{k=1}^\infty k^q a_k\myp\rme^{-k\alpha\ell},\qquad q\in\NN.
\end{equation}
Recall that Assumption \ref{as:z} is presumed to be satisfied
throughout.

\begin{theorem}\label{th:kappa_q}
For each\/ $q\in\NN$,
\begin{equation}\label{eq:kappa_q}
\varkappa_q[N_\lambda]\sim
\frac{q!}{\gamma^{q-1}}\,n^{(q+1)/2},\qquad n\to\infty.
\end{equation}
In particular, the variance of $N_\lambda$ satisfies
\begin{equation}\label{eq:Sigma}
\Var(N_\lambda)\sim \frac{2}{\gamma}\, n^{3/2},\qquad n\to\infty.
\end{equation}
\end{theorem}

\begin{proof}
The proof follows the same lines as that of Theorem \ref{th:kappa}
(i.e., with $q=1$). Namely, again using Lemma \ref{lm:C_k} and
Lebesgue's dominated convergence theorem, from
\eqref{eq:cumulants-xi**} we get
\begin{align}\label{eq:q!kappa}
\alpha^{q+1}\varkappa_q[N_\lambda] =\sum_{k=1}^\infty k^q a_{k}\myp
\bigl(\alpha^{q+1}S_{q+1}(k\alpha)\bigr) \to q!\sum_{k=1}^\infty
\frac{a_{k}}{k}=q!\,A(1)\equiv q!\,\gamma^2.
\end{align}
But $\alpha^{q+1}\mynn\sim \gamma^{q+1} n^{-(q+1)/2}$
(see~\eqref{alpha}), and the limit \eqref{eq:q!kappa} is reduced
to~\eqref{eq:kappa_q}.

The second claim of the theorem (i.e., the asymptotic
formula~\eqref{eq:Sigma}) immediately follows from
\eqref{eq:kappa_q} with $q=2$ by noting that
$\Var(N_\lambda)=\varkappa_2[N_\lambda]$ \mypp(cf.~\eqref{eq:m2}).
\end{proof}

\subsection{The cumulants of\/ $Y_\lambda(x)$}\label{sec4.3}

With the substitution $z=\rme^{-\alpha}$, the representations
\eqref{varkappa_zxi*} are rewritten in the form
\begin{equation}\label{varkappa_zxi**}
\varkappa_q[Y_\lambda(x)]=\sum_{\ell\ge x} \sum_{k=1}^\infty k^q
a_{k}\mypp \rme^{-k\alpha\ell},\qquad q\in\NN.
\end{equation}

Let us first consider the case $q=2$, where
$\varkappa_2[Y_\lambda(x)]=\Var[Y_\lambda(x)]$ (see~\eqref{eq:m2}).
\begin{theorem}\label{th:K1}
For every\/ $x>0$,
\begin{equation}\label{eq:Var(Y)}
\lim_{n\to\infty}n^{-1/2}\mypp\Var\myn\bigl[Y_\lambda(x\myp
n^{1/2})\bigr]= \gamma^{-1} \rme^{-\gamma x} H'_0(\rme^{-\gamma x}),
\end{equation}
where the convergence is uniform in $x\in[\delta,\infty)$ for any\/
$\delta>0$.
\end{theorem}

\begin{proof}
With the help of the notation $g_0(t)$ defined in \eqref{eq:phi}
(see also \eqref{eq:phi'}), formula \eqref{varkappa_zxi**} (with
$q=2$) takes the form
\begin{equation}\label{D1_1(t)*}
\Var\myn\bigl[Y_\lambda(x\myp n^{1/2})\bigr]=\sum_{\ell\ge x
n^{1/2}}  \sum_{k=1}^\infty k^2 a_{k}\mypp \rme^{-k\alpha\ell}
=-\myn\sum_{\ell\ge x n^{1/2}}\myn g_0^{\myp\prime}(\alpha\myp
\ell)\myp.
\end{equation}
Interpreting the right-hand side of \eqref{D1_1(t)*} as a Riemann
integral sum and arguing as in the proof of Theorem~\ref{th:3.2}, we
deduce that the equation \eqref{D1_1(t)*} converges, uniformly in
$x\ge\delta$, to
\begin{align*}
\lim_{n\to\infty}n^{-1/2}\mypp\Var\myn\bigl[Y_\lambda(x\myp
n^{1/2})\bigr]&=-\myp\gamma^{-1}\!\int_{\gamma x}^\infty
g_0^{\myp\prime}(t)\,\dif{t}\\[.2pc]
&=\gamma^{-1} g_0(\gamma x) =\gamma^{-1} \rme^{-\gamma x}
H'_0(\rme^{-\gamma x}),
\end{align*}
according to \eqref{eq:phi}. Thus, the theorem is proved.
\end{proof}

It is straightforward to adapt the proof of Theorem \ref{th:K1} to
the case $q\ge3$, which only requires a standard generalization of
the differential formulas \eqref{eq:H'}, \eqref{eq:H''} to higher
orders. This way, one can obtain the asymptotics of the form
$$
\lim_{n\to\infty}n^{-1/2}\myp\varkappa_q\myn\bigl[Y_\lambda(x\myp
n^{1/2})\bigr]= \chi_\gamma(x),\qquad x>0,
$$
where the function $\chi_\gamma(x)$ is expressed in terms of the
derivatives $H_0^{(j)}(\rme^{-\gamma x})$ ($j=1,\dots,q$).

For the purposes of the present paper (more precisely, for the proof
of Lemma \ref{lm:Kq4} below), we only need an \emph{upper estimate}
as follows.

\begin{lemma}\label{lm:Kq}
For every\/ $q\in\NN$ and any\/ $\delta>0$ we have, uniformly in
$x\in[\delta,\infty)$,
\begin{equation}\label{eq:rho1}
\varkappa_{q}\bigl[Y_\lambda(x\myp n^{1/2})\bigr]=O(n^{1/2}),\qquad
n\to\infty.
\end{equation}
\end{lemma}

In Section \ref{sec5.4} we will require the asymptotics (in fact, an
asymptotic bound) for the fourth \emph{central moment} of
$Y_\lambda(x\myp n^{1/2})$, which is established next.
\begin{lemma}\label{lm:Kq4}
Set $Y^0_\lambda(t):=Y_\lambda(t)-\EE_{z}[Y_\lambda(t)]$. Then for
any\/ $\delta>0$, uniformly in $x\in[\delta,\infty)$,
\begin{equation}\label{eq:rho10}
\lim_{n\to\infty} n^{-1}\EE_z\myn\bigl[\bigl(Y_\lambda^0(x\myp
n^{1/2})\bigr)^4\bigr]= 3\mypp\bigl\{\gamma^{-1} \rme^{-\gamma x}
H'_0(\rme^{-\gamma x})\bigr\}^2.
\end{equation}
\end{lemma}
\begin{proof}
Using the formula \eqref{eq:m4} (which is valid for any random
variable) we have
\begin{align*}
\EE_z\myn\bigl[\bigl(Y_\lambda^0(x\myp
n^{1/2})\bigr)^4\bigr]&=\varkappa_4\bigl[Y_\lambda(x\myp
n^{1/2})\bigr]+3\mypp\bigl\{\varkappa_2\bigl[Y_\lambda(x\myp
n^{1/2})\bigr]\bigr\}^2\\
&=O(n^{1/2})+3\myp n\left\{\gamma^{-1} \rme^{-\gamma x}
H'_0(\rme^{-\gamma x})\right\}^2\left(1+o(1)\right),\qquad
n\to\infty,
\end{align*}
on account of the (uniform) estimates \eqref{eq:Var(Y)} and
\eqref{eq:rho1}. Hence, the limit \eqref{eq:rho10} follows.
\end{proof}

\begin{remark}\label{rm:H(1)'}
Similarly to Remark \ref{rm:H(1)}, all the results above are valid
also for $x=0$ provided that the function $H_0(u)$ and the
corresponding derivatives are finite at $u=1$.
\end{remark}

\subsection{The Lyapunov ratio}\label{sec:Lyapunov}
Let us introduce the \emph{Lyapunov ratio} (of the third order)
\begin{equation}\label{L3}
L_z:=\frac{1}{\sigma_{\myn z}^{3}} \sum_{\ell=1}^\infty\ell^{\myp 3}
\mu_3[\myp\nu_\ell\myp],
\end{equation}
where we denote for short $\sigma_{\myn
z}:=\sqrt{\myp\Var(N_\lambda)}$ and
\begin{equation*}
\mu_3[\myp\nu_\ell\myp]
:=\EE_{z}\myn\bigl[|\nu_\ell^0|^3\bigr],\qquad
\nu_\ell^0:=\nu_\ell-m_1[\myp\nu_\ell\myp]
\end{equation*}
(i.e., $\mu_3[\myp\nu_\ell\myp]$ is the third-order absolute central
moment of $\nu_\ell$). The next asymptotic estimate will play an
important role in the proof of the local limit theorem in Section
\ref{sec5.3} below.

\begin{lemma}\label{lm:7.1} Suppose that $A^+(\frac12)<\infty$.
Then
\begin{equation}\label{eq:L}
L_z\asymp n^{-1/4},\qquad n\to\infty.
\end{equation}
\end{lemma}

\begin{proof}
In view of the definition \eqref{L3} and the asymptotics
$\sigma_{\myn z}\asymp n^{3/4}$ provided by Theorem~\ref{th:kappa_q}
(see~\eqref{eq:Sigma}), for the proof of \eqref{eq:L} it suffices to
show that
\begin{equation}\label{eq:sum(mu3)}
M_3:=\sum_{\ell=1}^\infty\ell^{\myp 3} \mu_3[\myp\nu_\ell\myp]\asymp
n^{2},\qquad n\to\infty.
\end{equation}

Starting with a \emph{lower bound} for $M_3$, observe using the
relation \eqref{eq:m3} that
\begin{equation}\label{eq:mu3lower}
\mu_3[\myp\nu_\ell\myp] \ge
m_3[\myp\nu_\ell^0\myp]=\varkappa_3[\myp\nu_\ell\myp].
\end{equation}
Hence, on account of the formula~\eqref{eq:cumulants-xi} and Theorem
\ref{th:kappa_q} (with $q=3$), from \eqref{eq:sum(mu3)} we get
\begin{equation}\label{eq:M-lower}
M_3\ge \sum_{\ell=1}^\infty \ell^{\myp 3}
\varkappa_3[\myp\nu_\ell\myp]=\varkappa_3[N_\lambda]\asymp
n^{2},\qquad n\to\infty,
\end{equation}
which is in agreement with the claim~\eqref{eq:sum(mu3)}.

To obtain a suitable \emph{upper bound} on $M_3$, note that for any
$u,v\ge0$,
\begin{equation}\label{eq:|u-v|}
|u-v|^3=(u-v)^2|u-v|\le (u-v)^2(u+v)=(u-v)^3 +2\myp v\myp (u-v)^2.
\end{equation}
Setting $u=\nu_\ell$\mypp, \,$v=m_1[\myp\nu_\ell\myp]$ in
\eqref{eq:|u-v|} and taking the expectation, we get the inequality
\begin{equation}\label{eq:mu3upper}
\mu_3[\myp\nu_\ell\myp] \le m_3[\myp\nu_\ell^0\myp] +2\myp
m_1[\myp\nu_\ell\myp]\cdot m_2[\myp\nu_\ell^0\myp]
=\varkappa_3[\myp\nu_\ell\myp]+2\varkappa_1[\myp\nu_\ell\myp]\cdot\varkappa_2[\myp\nu_\ell\myp],
\end{equation}
according to the identities \eqref{eq:m1}\,--\,\eqref{eq:m3}. Note
that the term $\varkappa_3[\myp\nu_\ell\myp]$ here is the same as in
\eqref{eq:mu3lower}, and so gives the contribution of the order of
$n^2$ into the corresponding upper bound for $M_3$, which is
consistent with the lower bound \eqref{eq:M-lower}.

The remaining product term on the right-hand side of
\eqref{eq:mu3upper}, when elaborated using \eqref{eq:cumulants}
(with $q=1$ and $q=2$, respectively) and substituted into
\eqref{eq:sum(mu3)}, yields
\begin{align}
\notag \sum_{\ell=1}^\infty
\ell^3\varkappa_1[\myp\nu_\ell\myp]\,\varkappa_2[\myp\nu_\ell\myp]&=\sum_{\ell=1}^\infty
\ell^3\sum_{k=1}^\infty ka_k\myp\rme^{-k\alpha\ell}\sum_{m=1}^\infty
m^2a_m\myp\rme^{-m\alpha\ell}\\
\notag &=\mynn \sum_{k,\myp m\ge1} k\myp |a_k| \:m^2
|a_m|\,S_4\bigl((k+m)\myp\alpha\bigr)\\
\label{eq:(k+m)^4} &=O(\alpha^{-4}) \sum_{k,\myp m\ge 1} \frac{k\myp
|a_k| \: m^2 |a_m|}{(k+m)^4}\myp,
\end{align}
according to Lemma~\ref{lm:C_k}. Observing that for $k,m\ge1$
\begin{equation*}
(k+m)^4=(k+m)^{3/2}(k+m)^{5/2}\ge  k^{3/2}\myp m^{5/2},
\end{equation*}
the right-hand side of \eqref{eq:(k+m)^4} is further estimated by
\begin{equation*}
O(\alpha^{-4}) \sum_{k=1}^\infty \frac{|a_k|}{k^{1/2}}
\sum_{m=1}^\infty \frac{|a_m|}{m^{1/2}}= O(\alpha^{-4}) \mypp
\bigl(A^+(\tfrac12)\bigr)^2=O(n^2),
\end{equation*}
according to the lemma's hypothesis and the asymptotics
$\alpha\asymp n^{-1/2}$ (see~\eqref{alpha}).

Thus, we have shown that $M_3=O(n^2)$, and together with the lower
bound \eqref{eq:M-lower} this completes the proof
of~\eqref{eq:sum(mu3)}.
\end{proof}

\section{A local limit theorem and the limit shape}\label{sec5}

\subsection{Statement of the local limit theorem}\label{sec5.1}
The role of a local limit theorem in our approach is to yield the
asymptotics of the probability
$\QQ_z\{N_\lambda=n\}\equiv\QQ_z(\CP_n)$ appearing in the
representation of the measure $\PP_n$ as a conditional distribution,
$\PP_n(\cdot)=\QQ_z(\cdot\mypp|\CP_n) =\QQ_z(\cdot)/\QQ_z(\CP_n)$.

To prove such a theorem (see Theorem~\ref{th:LCLT} below), we
will require a technical condition on the generating function
$\mathcal{F}_0(u)$ as follows.
\begin{assumption}\label{as:7.1}
There exists a constant $\delta_*\myn>0$ such that for any
$\theta\in(0,1)$ the function $H_0(u)=\ln\myn(\mathcal{F}_0(u))$
\myp($u\in\CC$) satisfies the inequality
\begin{equation}\label{eq:<a1}
H_0(\theta) -\Re\myp(H_0(\theta\mypp\rme^{\myp\rmi t}))\ge \delta_*
\mypp\theta\mypp(1-\cos t),\qquad t\in\RR\myp.
\end{equation}
\end{assumption}

\begin{remark}\label{rm:a1>0}
In terms of the coefficients $\{a_k\}$ in the expansion
\eqref{eq:F0}, the left-hand side of \eqref{eq:<a1} is expressed as
$\sum_{k=1}^\infty\myn a_k\myp \theta^k(1-\cos kt)$. Consequently,
if $a_1>0$ and $a_k\ge0$ for all $k\ge2$ then the inequality
\eqref{eq:<a1} is satisfied with $\delta_*=a_1>0$.
\end{remark}

As before, we denote $\mu_z=\EE_{z}(N_\lambda)$, \,$\sigma_{\myn z}=
\sqrt{\myp\Var(N_\lambda)}$\myp. Consider the probability density of
a normal distribution $\mathcal{N}(\mu_z,\sigma_{\myn z}^2)$ (i.e.,
with mean $\mu_z$ and variance $\sigma_{\myn z}^2$),
\begin{equation}\label{eq:phi1}
f_{\mu_z\myn,\myp\sigma_{\myn z}}(x)=
\frac{1}{\sqrt{2\pi}\,\sigma_{\myn
z}}\,\exp\!\myn\left\{-{\textstyle \frac12}\myp
(x-\mu_z)^2\myn/\sigma_{\myn z}^{2}\mypp\right\},\qquad x\in\RR\myp.
\end{equation}

\begin{theorem}\label{th:LCLT}
Let $A^+(\frac12)<\infty$ and Assumption\/ \textup{\ref{as:7.1}}
hold. Then, uniformly in\/ $m\in\ZZ_+$\myp,
\begin{equation}\label{eq:LCLT}
\QQ_{z}\{N_\lambda=m\}=f_{\mu_z\myn,\myp\sigma_{\myn
z}}(m)+O(n^{-1}),\qquad n\to\infty.
\end{equation}
\end{theorem}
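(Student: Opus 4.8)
The plan is to prove the local limit theorem by Fourier inversion, comparing the characteristic function of $N_\lambda$ with that of the Gaussian law $\calN(a_z,\sigma_z^2)$. Since $N_\lambda$ is integer-valued with span $1$ (the span is exactly $1$ because $a_1=c_1>0$ forces $\nu_1$ to take both values $0$ and $1$ with positive probability, so that $N_\lambda$ and $N_\lambda+1$ are simultaneously attainable), the inversion formula gives
\[
\QQ_z\{N_\lambda=m\}=\frac{1}{2\pi}\int_{-\pi}^{\pi}\varphi_N(t)\,\rme^{-\rmi tm}\,\dif t,\qquad \varphi_N(t):=\EE_z\bigl(\rme^{\rmi tN_\lambda}\bigr)=\prod_{\ell=1}^\infty\varphi_{\nu}(\ell t;\ell),
\]
while the normal density admits the representation $\phi_{a_z,\sigma_z}(m)=\frac{1}{2\pi}\int_{-\infty}^{\infty}\widehat\phi(t)\,\rme^{-\rmi tm}\,\dif t$ with $\widehat\phi(t):=\exp(\rmi a_zt-\tfrac12\sigma_z^2t^2)$. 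Subtracting, I would bound $|\QQ_z\{N_\lambda=m\}-\phi_{a_z,\sigma_z}(m)|$ by $\frac{1}{2\pi}$ times the integrated discrepancy of the two integrands, split over the \emph{central} range $|t|\le\tau$, the \emph{non-central} range $\tau<|t|\le\pi$, and the Gaussian tail $|t|>\tau$; here $\tau:=U/\sigma_z$ with $U:=C\sqrt{\ln n}$ and $C$ a large constant. Recall the scales $a_z\sim n$, $\sigma_z^2\asymp n^{3/2}$ (Lemma \ref{lm:K}) and $L_z\asymp n^{-1/4}$ (Lemma \ref{lm:7.1}). Because $|\rme^{-\rmi tm}|=1$, every bound below is independent of $m$, which yields the required uniformity.

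On the central range I would use the cumulant (Taylor) expansion of $\ln\varphi_N$. Writing $\ln\varphi_N(t)=\rmi a_zt-\tfrac12\sigma_z^2t^2+R(t)$ and estimating each factor $\ln\varphi_{\nu}(\ell t;\ell)$ by the standard third-order remainder bound, summation over $\ell$ gives $|R(t)|\le c\,|t|^3\sum_{\ell}\ell^3\mu_3(\ell)=c\,|t|^3\sigma_z^3L_z$ (using the definition (\ref{L3}) of $L_z$ and $\sum_\ell\ell^3\mu_3(\ell)\asymp n^2$ from Lemma \ref{lm:muk}); the expansion is legitimate here because on the dominant scales $\ell\lesssim\alpha_n^{-1}=O(n^{1/2})$ one has $|\ell t|\lesssim Un^{-1/4}\to0$. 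After the substitution $u=\sigma_z t$ this reads $|R|\le cL_z|u|^3$, and since $L_zU^3\to0$ we get $|\varphi_N(t)-\widehat\phi(t)|=|\widehat\phi|\,|\rme^{R}-1|\le 2c\,L_z|u|^3\rme^{-u^2/2}$ on $|u|\le U$. Integrating and dividing by $2\pi\sigma_z$ produces a central contribution of order $L_z/\sigma_z\asymp n^{-1/4}\cdot n^{-3/4}=n^{-1}$; the Gaussian tail $\frac{1}{2\pi\sigma_z}\int_{|u|>U}\rme^{-u^2/2}\,\dif u\lesssim\sigma_z^{-1}\rme^{-U^2/2}$ is $O(n^{-1})$ once $C$ satisfies $C^2/2\ge1/4$.

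The non-central range is where Assumption \ref{as:7.1} enters and where the main difficulty lies. From (\ref{eq:|phi|}) we have $\ln|\varphi_N(t)|=-\sum_\ell\sum_k a_kz^{k\ell}(1-\cos k\ell t)$, and applying inequality (\ref{eq:<a1}) with $\theta=z^\ell$ and argument $\ell t$ yields
\[
-\ln|\varphi_N(t)|\ge\delta_1a_1\sum_{\ell=1}^\infty z^\ell(1-\cos\ell t)=\delta_1a_1\,\frac{z(1+z)(1-\cos t)}{(1-z)\bigl[(1-z)^2+2z(1-\cos t)\bigr]}=:\delta_1a_1\,G(t),
\]
the closed form following by summing the geometric-type series. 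Since $1-z\sim\alpha_n\asymp n^{-1/2}$, a two-regime analysis of $G$ is decisive: for $|t|\gtrsim\alpha_n$ one has $(1-z)^2+2z(1-\cos t)\le 4(1-\cos t)$, whence $G(t)\ge c/(1-z)\asymp n^{1/2}$ and $|\varphi_N(t)|\le\rme^{-c'n^{1/2}}$ is super-polynomially small; for $\tau<|t|\lesssim\alpha_n$ the bound $1-\cos t\ge 2t^2/\pi^2$ gives $G(t)\ge c\,t^2/(1-z)^3\asymp t^2/\alpha_n^3$, so that $\int_{\tau}^{\alpha_n}\rme^{-\delta_1a_1G(t)}\,\dif t\lesssim\alpha_n^{3/2}\int_{U}^{\infty}\rme^{-c''v^2}\,\dif v\lesssim n^{-3/4}\rme^{-c''U^2}$ after the substitution $v=t\alpha_n^{-3/2}$. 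Choosing $C$ large enough makes this $O(n^{-1})$, and the super-polynomial bound disposes of $\alpha_n\lesssim|t|\le\pi$ with room to spare.

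Collecting the three contributions gives the claimed $O(n^{-1})$ error, uniformly in $m$. The hard part is the non-central estimate: Assumption \ref{as:7.1} must be exactly strong enough to convert the possible sign changes of the coefficients $a_k$ into the clean lower bound $\delta_1a_1G(t)$, after which the explicit form of $G$ together with the calibration $1-z\sim\kappa n^{-1/2}$ must be tracked carefully to confirm that the crossover at $|t|\asymp\alpha_n$ separates a Gaussian-type decay (matching the central region at the junction $|t|\asymp\tau$) from genuinely super-polynomial decay. The only other point demanding care is verifying that the third-order remainder in the cumulant expansion is uniformly valid across the central range and summable over $\ell$, which is secured by the moment bounds of Section \ref{sec5}.
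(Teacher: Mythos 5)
Your proposal is correct and follows essentially the same route as the paper: Fourier inversion over $[-\pi,\pi]$, a Gaussian comparison in a central zone controlled by the Lyapunov ratio $L_z\asymp n^{-1/4}$ (yielding the dominant error $O(L_z/\sigma_z)=O(n^{-1})$), a Gaussian tail estimate, and Assumption \ref{as:7.1} converted into the lower bound $-\ln|\varphi_{N_\lambda}(t)|\ge \delta_1 a_1\sum_{\ell}z^\ell(1-\cos \ell t)$ on the non-central zone. The only substantive difference is the placement of the cutoff: the paper puts it at $(L_z\sigma_z)^{-1}\asymp\alpha_n$, using the bound $|\varphi_{N_\lambda^0}(u\myp\sigma_z^{-1})-\rme^{-u^2/2}|\le 16\myp L_z|u|^3\rme^{-u^2/6}$ valid for all $|u|\le L_z^{-1}$, so its non-central integral enjoys super-polynomial decay throughout, whereas your smaller cutoff $C\sqrt{\ln n}/\sigma_z$ forces the extra two-regime analysis of the explicit geometric sum $G(t)$ in the window $\tau<|t|\lesssim\alpha_n$ --- both choices close the argument.
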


In fact we will only need a particular case with $m=n$.

\begin{corollary}\label{cor:Q} Under the conditions of\/
Theorem \textup{\ref{th:LCLT}},
\begin{equation}\label{sim}
\QQ_{z}\{N_\lambda=n\}\asymp n^{-3/4}, \qquad n\to\infty.
\end{equation}
\end{corollary}

With the asymptotic results of Sections \ref{sec3.3} and
\ref{sec4.3} at hand, it is not difficult to deduce the corollary
from the theorem.

\begin{proof}[Proof\/ of\/ Corollary\/ \textup{\ref{cor:Q}}] By
Theorem~\ref{th:4.1} with $\sigma=\frac12$\myp, we have $\mu_z=
n+O(n^{3/4})$. Together with Theorem~\ref{th:kappa_q}
(see~\eqref{eq:Sigma}) this implies that $(n-\mu_z)/\sigma_{\myn z}
=O(1)$. Hence,
\begin{align*}
f_{\mu_z\myn,\myp\sigma_{\myn
z}}(n)&=\frac{1}{\sqrt{2\pi}\,\sigma_{\myn z}}\,
\exp\mynn\bigl\{-{\textstyle\frac12}\myp(n-\mu_z)^2\myn/\sigma_{\myn
z}^{2}\bigr\}\asymp \sigma_{\myn z}^{-1}\sim n^{-3/4}, \qquad
n\to\infty,
\end{align*}
and \eqref{sim} now readily follows from~\eqref{eq:LCLT}.
\end{proof}

\subsection{Estimates of the characteristic functions}\label{sec5.2}

For the proof of Theorem \ref{th:LCLT}, we need some technical
preparations. Recall from Section \ref{sec2.1} that the random
variables $\{\nu_{\ell}\myp,\,\ell\in\NN\}$ are mutually independent
under the measure $\QQ_{z}$. Hence, the characteristic function
$\varphi_{N_\lambda}(t)=\EE_{z} (\rme^{\myp\rmi tN_\lambda})$ of the
sum $N_\lambda=\sum_{\ell=1}^\infty \myn\ell\myp\nu_\ell$ \myp{}is
given by
\begin{equation}\label{x.f_5_0}
\varphi_{N_\lambda}(t) =\prod_{\ell=1}^\infty \varphi_{\nu_\ell}(t
\ell)=\prod_{\ell=1}^\infty
\frac{\mathcal{F}_0(z^{\ell}\rme^{\myp\rmi
t\ell})}{\mathcal{F}_0(z^{\ell})}\myp,\qquad t\in\RR\myp,
\end{equation}
where $\varphi_{\nu_\ell}(\cdot)$ is the characteristic function of
$\nu_\ell$ (see~\eqref{eq:c.f.}). The next lemma provides a useful
estimate for $\varphi_{N_\lambda}(t)$ essentially proved in
\cite[Lemma 7.12]{BZ4}.\footnote{\ A ``two-dimensional'' proof in
\cite[Lemma 7.12]{BZ4} can be easily adapted to the one-dimensional
case.} Recall that the Lyapunov ratio $L_z$ is defined
in~\eqref{L3}.
\begin{lemma}\label{lm:7.2_F}
For all\/ $t\in\RR$ such that $|t|\le (L_z\sigma_z)^{-1}$ we have
\begin{equation*}
\bigl|\varphi_{N_\lambda}(t)-\exp\mynn\bigl\{\rmi t
\mu_z-\tfrac12\mypp t^2\sigma_{\myn z}^2\bigr\} \mynn\bigr| \le
16\myp|t|^3 L_z\myp\sigma_{\myn z}^3 \exp\mynn\bigl\{-\tfrac16\mypp
t^2\sigma_{\myn z}^2\bigr\}.
\end{equation*}
\end{lemma}

Let us also prove the following global bound.
\begin{lemma}\label{lm:7.3}
Suppose that Assumption\/ \textup{\ref{as:7.1}} is satisfied
\textup{(}with $\delta_*>0$\textup{)}. Then
\begin{equation}\label{f_J}
|\varphi_{N_\lambda}(t)|\le\exp\mynn\bigl\{-\delta_*J_\alpha(t)\bigr\},\qquad
t\in\RR\myp,
\end{equation}
where
\begin{equation}\label{J_0}
J_{\alpha}(t):=\sum_{\ell=1}^\infty \rme^{-\alpha\ell}\myp(1-\cos
t\ell\myp).
\end{equation}
\end{lemma}

\begin{proof}
From \eqref{x.f_5_0} it follows that
\begin{equation}\label{eq:J_1}
\ln\mynn |\varphi_{N_\lambda}(t)| = \sum_{\ell=1}^\infty
\ln\mynn|\varphi_{\nu_\ell}(t\ell)|, \qquad t\in\RR\myp.
\end{equation}
Furthermore, using \eqref{eq:ln_c.f.} and Assumption \ref{as:7.1}
with $\theta=z^\ell$ (see \eqref{eq:<a1}), for each $\ell\in\NN$ we
have
\begin{align}
\notag
\ln\myn|\varphi_{\nu_\ell}(t\ell)|
=\Re\myp\bigl(\ln\myn(\varphi_{\nu_\ell}(t\ell))\bigr) &=
\Re\myp(H_0(z^\ell\myp\rme^{\myp\rmi t\ell})) -H_0(z^\ell)\\[.2pc]
\label{eq:<-delta*}
&\le -\delta_* \myp z^{\ell}\myp(1-\cos
t\ell\myp),\qquad t\in\RR\myp.
\end{align}
Setting here $z=\rme^{-\alpha}$ (see~\eqref{alpha}) and returning
from \eqref{eq:<-delta*} to \eqref{eq:J_1}, we obtain the inequality
$\ln\mynn |\varphi_{N_\lambda}(t)|\le -\delta_* J_\alpha(t)$, which
is equivalent to~\eqref{f_J}.
\end{proof}

\subsection{Proof of Theorem \ref{th:LCLT}}
\label{sec5.3}

By definition, the characteristic function $\varphi_{N_\lambda}(t)
=\EE_z\myn\bigl[\rme^{\myp \rmi t N_\lambda}\bigr]$ is given by
\begin{equation}\label{eq:Fourier}
\varphi_{N_\lambda}(t) =\sum_{m=0}^\infty
\QQ_{z}\{N_\lambda=m\}\,\rme^{\myp\rmi t\myp m},\qquad t\in\RR\myp.
\end{equation}
Hence, the coefficients of the Fourier series \eqref{eq:Fourier} are
expressed as
\begin{equation}\label{l_1}
\QQ_{z}\{N_\lambda=m\}=\frac{1}{2\pi}\myn\int_{T} \rme^{-\rmi t\myp
m}\mypp \varphi_{N_\lambda}(t)\,\dif{}t,\qquad m\in\ZZ_{+}\myp,
\end{equation}
where $T:=[-\pi,\pi]$. On the other hand, the characteristic
function of the normal distribution $\mathcal{N}(\mu_z,\sigma_{\myn
z}^2)$ (see~\eqref{eq:phi1}) is given by
\begin{equation*}
\int_{-\infty}^\infty\myn f_{\mu_z\myn,\myp\sigma_{\myn
z}}\myn(x)\,\rme^{\myp\rmi t x}\,\dif{x}= \rme^{\myp\rmi t
\mu_z-t^2\sigma_{\myn z}^2/2},\qquad t\in\RR\myp,
\end{equation*}
so by the inversion formula we have
\begin{equation}\label{f_o}
f_{\mu_z\myn,\myp\sigma_{\myn z}}\myn(m)= \frac{1}{2\pi}
\myn\int_{-\infty}^\infty \rme^{-\rmi t \myp m}\,\rme^{\myp\rmi t
\myp \mu_z-t^2\sigma_{\myn z}^2/2}\,\dif{}t\myp,\qquad
m\in\ZZ_+\myp.
\end{equation}

Denote $D_z\myn:=\{t\in\RR\myp\colon |t|>(L_z\sigma_{\myn
z})^{-1}\}$. By the asymptotic formula \eqref{eq:Sigma} and
Lemma~\ref{lm:7.1}, we have $(L_z\sigma_{\myn z})^{-1}\asymp
n^{1/4}\myp n^{-3/4}=n^{-1/2}=o(1)$, which implies that
$D_z^c:=\RR\setminus D_z\subset T$ for all $n$ large enough.
Furthermore, since $\alpha\asymp n^{-1/2}$ (see~\eqref{alpha}), it
follows that $(L_z\sigma_{\myn z})^{-1}\!>\eta\myp\alpha$ with a
suitable (small) constant $\eta>0$, hence $D_z\subset \{t\in
\RR\colon |t|> \eta\myp\alpha\}$. Thus, subtracting \eqref{f_o}
from~\eqref{l_1} we get, uniformly in $m\in\ZZ_+$\myp,
\begin{equation}\label{I}
\bigl|\QQ_{z}\{N_\lambda=m\}-f_{\mu_z\myn,\myp\sigma_{\myn
z}}\myn(m)\bigr|\le \mathcal{I}_1+\mathcal{I}_2+\mathcal{I}_3\myp,
\end{equation}
where
\begin{gather}\label{eq:I12}
\mathcal{I}_1:=\frac{1}{2\pi}\myn\int_{D_z^c}
      \bigl|\varphi_{N_\lambda}(t)-\rme^{\myp\rmi t \myp\mu_z
                              -t^2\sigma_{\myn z}^2/2}
      \bigr|\,\dif{}t,\qquad \mathcal{I}_2:=\frac{1}{2\pi}\myn\int_{D_z}
      \!\rme^{-t^2\sigma_{\myn z}^2/2}\,\dif{}t,\\
\label{eq:I3} \mathcal{I}_3:=\frac{1}{2\pi}\myn
      \int_{T\cap D_z}
      \!|\varphi_{N_\lambda}(t)|\:\dif{}t.
\end{gather}
By Lemma \ref{lm:7.2_F} and on the substitution
$t=y\mypp\sigma_{\myn z}^{-1}$, the integral $\mathcal{I}_1$ in
\eqref{eq:I12} is estimated by
\begin{equation}\label{I1}
\mathcal{I}_1 =O(L_z\myp\sigma_{\myn z}^{-1})\mynn\int_{0}^\infty
\mynn y^3\mypp\rme^{-y^2\myn/6}\,\dif{}y= O(n^{-1}),
\end{equation}
according to the asymptotics of $\sigma_{\myn z}$ and $L_z$ (see
\eqref{eq:Sigma} and \eqref{eq:L}, respectively). Similarly, for the
integral $\mathcal{I}_2$ (see \eqref{eq:I12}) we obtain, again using
\eqref{eq:Sigma} and \eqref{eq:L},
\begin{align}
\notag \mathcal{I}_2= O(\sigma_{\myn z}^{-1})
\mynn\int_{L_z^{-1}}^\infty \myn\rme^{-y^2\myn/2}\,\dif{}y&=O(L_z
\sigma_{\myn z}^{-1})\mynn\int_{L_z^{-1}}^\infty
\myn y\,\rme^{-y^2\myn/2}\,\dif{}y\\[.2pc]
\label{I2} &= O(n^{-1/2})\,
\rme^{-L_z^{-2}\myn/2}=O\bigl(n^{-1/2}\myp\rme^{-\const
\sqrt{n}}\myp\bigr)=o(n^{-1}).
\end{align}

Finally, let us turn to the integral $\mathcal{I}_3$ in
\eqref{eq:I3}. By Lemma \ref{lm:7.3} and a remark about the domain
$D_z$ made before display \eqref{I}, we have
\begin{align}\label{I3}
\mathcal{I}_3&\le \frac{1}{2\pi}\mynn \int_{T\cap D_z}
\!\rme^{-\delta_*J_{\alpha}(t)}\,\dif{t}\le
\frac{1}{\pi}\mynn\int_{\eta\alpha}^\pi
\rme^{-\delta_*J_{\alpha}(t)}\,\dif{t}.
\end{align}
Furthermore, evaluating the sum in \eqref{J_0} (where for
convenience we include the vanishing term with $\ell=0$) we obtain
\begin{align}
\notag J_{\alpha}(t)= \sum_{\ell=0}^\infty \rme^{-\alpha
\ell}\myp\!\left(1-\Re\myp(\rme^{\myp\rmi t
\ell}\myp)\right)&=\frac{1}{1-\rme^{-\alpha}}- \Re\myp\!\left(\frac{
1}{1-\rme^{-\alpha+\rmi t}}\right)\\
\label{J_1}& \ge
\frac{1}{1-\rme^{-\alpha}}-\frac{1}{|1-\rme^{-\alpha + \rmi
t}|}\myp,
\end{align}
because $\Re\myp(s)\le|s|$ for any $s\in\CC$\myp. Observe that for
$t\in[\eta\myp\alpha,\pi]$
\begin{equation*}
|1-\rme^{-\alpha+\rmi t}|
\ge|1-\rme^{-\alpha+\rmi\eta\alpha}|\sim\alpha \mypp|1+\rmi \eta|
=\alpha \mypp\sqrt{1+\eta^2} \qquad (\alpha\to0^+).
\end{equation*}
Substituting this estimate into \eqref{J_1}, we conclude that
$J_\alpha(t)$ is asymptotically bounded below by
$C(\eta)\myp\alpha^{-1}\myn \asymp n^{1/2}$ (with
$C(\eta)=1-(1+\eta^2)^{-1/2}>0$), uniformly in
$t\in[\eta\myp\alpha,\pi]$. Thus, the integral in \eqref{I3} is
bounded by $O\bigl(\rme^{-\const \, \sqrt{n}}\myp\bigr)=o(n^{-1})$.

Hence, recalling also the estimates \eqref{I1} and \eqref{I2}, we
see that the right-hand side of \eqref{I} admits an asymptotic bound
$O(n^{-1})$, which completes the proof of Theorem~\ref{th:LCLT}.

\subsection{The limit shape results}\label{sec5.4}
Recall the definition $\omega^*(x):=\gamma^{-1}H_0(\rme^{-\gamma
x})$ (see~\eqref{eq:omega}), where
$H_0(u)=\ln\myn(\mathcal{F}_0(u))$ and $\gamma=\bigl(\int_0^1\myn
u^{-1}H_0(u)\,\dif{u}\bigr)^{1/2}$ (see \eqref{eq:kappa}
and~\eqref{eq:kappa1}).
\begin{theorem}\label{th:LSQ}
Under Assumption \textup{\ref{as:z}} we have, for every\/ $\delta>0$
and any\/ $\varepsilon>0$,
\begin{equation*}
\lim_{n\to\infty} \QQ_{z}\bigl\{\lambda\in\CP\colon
\sup\nolimits_{x\ge\delta}\mynn|\myp n^{-1/2}\myp Y_\lambda(x\myp
n^{1/2})-\omega^*(x)|>\varepsilon\bigr\}=1.
\end{equation*}
\end{theorem}

\begin{proof}
By virtue of Theorem \ref{th:3.2}, letting
$Y^0_\lambda(t):=Y_\lambda(t)-\EE_{z}[Y_\lambda(t)]$ it suffices to
check that
\begin{equation}\label{eq:Q->0}
\lim_{n\to\infty}\QQ_{z}\bigl\{ \sup\nolimits_{x\ge\delta}
\mynn|Y^{0}_\lambda(x\myp n^{1/2})|>\varepsilon\myp
n^{1/2}\bigr\}\to0.
\end{equation}
Put $Z_\lambda(t):=Y_\lambda(t^{-1})$ ($t>0$). From the definition
\eqref{eq:Young} of $Y_\lambda(\cdot)$, for any $0<s<t$ we have
$$
Z_\lambda(t)-Z_\lambda(s)=Y_\lambda(t^{-1})-Y_\lambda(s^{-1})=\sum_{t^{-1}\le
\myp\ell\myp<s^{-1}}\!\nu_\ell\mypp,
$$
and it follows that the random process $Z_\lambda(t)$ ($t>0$) has
independent increments. Hence,
$Z^0_\lambda(t):=Z_\lambda(t)-\EE_{z}[Z_\lambda(t)]$ \strut{}is a
martingale with respect to the filtration ${\mathcal
F}_t=\sigma\{\nu_\ell\myp,\, \ell\ge t^{-1}\}$. From
\eqref{eq:Young} it is also evident that $Z^0_\lambda(t)$ is
\emph{c\`adl\`ag} (i.e., its paths are everywhere right-continuous
and have left limits, cf.\ Fig.\mypp\ref{fig1}a). Therefore, by the
Doob--Kolmogorov submartingale inequality (see, e.g.,
\cite[Theorem~6.14, p.\:99]{Yeh}
we obtain
\begin{align}
\notag \QQ_{z}\bigl\{ \sup\nolimits_{x\ge\delta}
\mynn|Y^{0}_\lambda(x\myp n^{1/2})|>\varepsilon\myp n^{1/2}\bigr\}
&\equiv \QQ_{z}\bigl\{ \sup\nolimits_{y\le\delta^{-1}}
\mynn|Z^{0}_\lambda(y\myp
n^{-1/2})|>\varepsilon\myp n^{1/2}\bigr\}\\[.2pc]
\notag
&\le\frac{\sup_{y\le\delta^{-1}}\mynn\Var\myn\bigl[Z_\lambda(y\myp
n^{-1/2})\bigr]}{\varepsilon^2 n}\\[.2pc]
\notag
&\le \frac{\Var\myn\bigl[Z_\lambda(\delta^{-1}
n^{-1/2})\bigr]}{\varepsilon^2
n}\\[.2pc]
\label{eq:Doob1} &\equiv \frac{\Var\myn\bigl[Y_\lambda(\delta\myp
n^{1/2})\bigr]}{\varepsilon^2 n}=O(n^{-1/2}),
\end{align}
in view of Theorem~\ref{th:K1}. Thus, the claim \eqref{eq:Q->0}
follows and the theorem is proved.
\end{proof}

We are finally ready to prove our main result about the limit shape
under the measure $\PP_n$ (cf.\ Theorem \ref{th:main1} stated in the
Introduction).

\begin{theorem}\label{th:LSP}
Suppose that $A^+(\frac12)<\infty$ and that Assumption
\textup{\ref{as:7.1}} is satisfied. Then, for every\/ $\delta>0$ and
any\/ $\varepsilon>0$,
\begin{equation*}
\lim_{n\to\infty} \PP_n\bigl\{\lambda\in\CP_n\colon
\sup\nolimits_{x\ge \delta}\mynn\bigl|\myp n^{-1/2}\myp
Y_\lambda(x\myp n^{1/2})-\omega^*(x)\bigr| >\varepsilon\bigr\}=0.
\end{equation*}
\end{theorem}

\begin{proof}
Like in the proof of Theorem \ref{th:LSQ}, the claim is reduced to
the limit
\begin{equation}\label{A1-0}
\lim_{n\to\infty}\PP_n\bigl\{\sup\nolimits_{x\ge\delta}\mynn
|Y_\lambda^{0}(x\myp n^{1/2})|>\varepsilon\myp n^{1/2}\bigr\}=0,
\end{equation}
with $Y^0_\lambda(t)=Y_\lambda(t)-\EE_{z}[Y_\lambda(t)]$. Recalling
the definition \eqref{Pn} of $\PP_n(\cdot)$, it is easy to see that
\begin{equation}\label{A1}
\PP_n\bigl\{\sup\nolimits_{x\ge\delta}\mynn|Y_{\lambda}^0 (x\myp
n^{1/2})|> \varepsilon\myp n^{1/2}\bigr\} \le
\frac{\QQ_{z}\bigl\{\sup_{x\ge \delta}\mynn|Y_\lambda^{0}(x\myp
n^{1/2})|> \varepsilon\myp n^{1/2}\bigr\}}
{\QQ_{z}\{N_\lambda=n\}}\myp.
\end{equation}
Again using the time reversal $t\mapsto t^{-1}$ as in the proof of
Theorem \ref{th:LSQ} and applying the Doob--Kolmogorov submartingale
inequality (now with the fourth moment), we obtain
(cf.~\eqref{eq:Doob1})
\begin{equation*}
\QQ_{z}\bigl\{\sup\nolimits_{x\ge \delta}\mynn|Y^0_\lambda(x\myp
n^{1/2})|>\varepsilon\myp n^{1/2} \bigr\}\le
\frac{\EE_{z}\myn\bigl[\bigl(Y^0_\lambda(\delta\myp
n^{1/2})\bigr)^4\bigr]}{\varepsilon^4\myp n^{2}} =O(n^{-1}),
\end{equation*}
by Lemma \ref{lm:Kq4}. On the other hand, for the denominator in
\eqref{A1} we have $\QQ_{z}\{N_\lambda=n\}\asymp n^{-3/4}$ by
Corollary~\ref{cor:Q}. As a result, the right-hand side of
\eqref{A1} is dominated by $O(n^{-1/4})=o(1)$, and the limit
\eqref{A1-0} readily follows.
\end{proof}

\section{Examples}\label{sec6}

We now proceed to a few illustrative examples of multiplicative
ensembles of random partitions with equiweighted parts. As we will
see, some of the examples entail simple representatives of the three
meta-classes of decomposable combinatorial structures known as
\emph{assemblies}, \emph{multisets} and \emph{selections} (see
\cite[\S\myp2.2]{ABT}). More specifically, Example \ref{ex:1} below
belongs to the class of weighted partitions, including the case of
unrestricted partitions under the uniform (equiprobable)
distribution; Example \ref{ex:2} leads to (weighted) partitions with
bounds on the part counts, including uniformly distributed strict
partitions (i.e., with distinct parts); Example \ref{ex:3} includes
set partitions with labeled elements and ordered contents. Examples
\ref{ex:4} and \ref{ex:5}, as well as Example \ref{ex:3}, are
instances of the so-called \emph{exponential structures} (see, e.g.,
\cite[\S\myp5.5]{Stanley}).
To the best of our knowledge, Example \ref{ex:6} appears to be new
in the context of random partitions; interestingly, it furnishes a
\emph{branch point singularity} of the generating function
$\mathcal{F}_0(u)$ at $u=1$ (see a discussion at the end of
Section~\ref{sec1.2}).

\subsection{Assemblies, multisets, and selections: a synopsis}\label{sec6.1}

A brief account below essentially follows the classic book
\cite{ABT} (see also the earlier paper~\cite{AT}).

A decomposable combinatorial structure defined on $n\in\NN$ elements
is characterized by the (non-ordered) collection of its components
of sizes $\ell=1,2,\dots$ with the corresponding counts
(multiplicities) $\nu_1,\nu_2,\dots$, so that $\sum_{\ell=1}^n
\ell\myp \nu_\ell=n$. Consequently, the counts $\{\nu_\ell\}$
determine a partition $\lambda=(1^{\nu_1}2^{\nu_2}\mynn\dots)$ of
the integer $n$. The specific composition of each component may or
may not be relevant, depending on whether the elements are
distinguishable (``labeled'') or not. Furthermore, suppose that
components of the same size may vary by their type; more
specifically, given a sequence of natural numbers $\{m_\ell\}$,
suppose that a component of size $\ell\in\NN$ may be colored in
$m_\ell$ different colors, irrespectively of any other components.

Let $\mathcal{S}_n=\{s\}$ be the set of all admissible instances $s$
of such a structure of size $n\in\ZZ_+$\myp, and denote their number
by $p(n):=\#\mathcal{S}_n$ (by convention, $\mathcal{S}_0=\emptyset$
and $p(0):=1$). Suppose that the space $\mathcal{S}_n$ is endowed
with a uniform probability measure, whereby all $p(n)$ instances
$s\in\mathcal{S}_n$ are equally likely; in turn, this induces a
certain probability distribution $\PP_n$ on the corresponding random
counts $(\nu_1,\dots,\nu_n)$ and, consequently, on the partition
space $\CP_n$\myp.

This general scheme is exemplified by the three aforementioned
meta-types of decomposable combinatorial structures. In brief,
\emph{assemblies} are formed of labeled exchangeable elements,
whereas in \emph{multisets} the elements are unlabeled and therefore
indistinguishable; furthermore, \emph{selections} are like multisets
but with distinct components. In what follows, we elaborate on that
by giving formulas for the respective generating functions (which in
all cases enjoy a product decomposition of the form~\eqref{eq:F0}),
as well as for the corresponding joint distributions of the random
counts $\nu_\ell$'s under the uniform parent measure on the space
$\mathcal{S}_n$ (which should be compared with the general
multiplicative formula~\eqref{eq:b-Gamma-mult}).

\subsubsection{Assemblies.}\label{sec6.1.1}
This class is characterized by the formula \cite[\S\myp2.2,
p.\,46]{ABT} (cf.\ \cite[\S\myp5.1]{Stanley})
\begin{equation}\label{eq:exp(M)1}
\mathcal{F}(z):=\sum_{n=0}^\infty\frac{p(n)\myp
z^n}{n!}=\exp\!\left(\sum_{\ell=1}^\infty\frac{m_\ell\mypp
z^\ell}{\ell!}\right)\!,
\end{equation}
which fits in the definition \eqref{eq:F0} of multiplicative
measures with the constituent generating functions
$\mathcal{F}_{\myn\ell\myp}(z):=\exp\mynn(m_\ell\mypp z/\ell!)$ and
the corresponding power expansion coefficients
\begin{equation*}
c_k^{(\ell)}=\left(\frac{m_\ell}{\ell!}\right)^k\myn\frac{1}{k!}\myp,\qquad
k\in\ZZ_+\myp.
\end{equation*}

It is easy to show (see \cite[Eq.\,(2.2), p.\,46]{ABT}) that the
number of assemblies of size $n$ which have prescribed counts
$\nu_\ell=k_\ell$\myp, \,$\ell=1,\dots,n$ (satisfying the condition
$\sum_{\ell=1}^n \myn\ell k_\ell=n$) is equal~to
\begin{equation}\label{eq:nu=k}
n!\prod_{\ell=1}^n
\left(\frac{m_\ell}{\ell!}\right)^{k_\ell}\myn\frac{1}{k_\ell\myp!}\mypp,
\end{equation}
and it follows that the joint distribution of $\nu_\ell$'s in this
model is given by \cite[Eq.\,(2.6), p.\,48]{ABT}
$$
\PP_n\{\nu_\ell=k_\ell,\
\ell=1,\dots,n\}=\frac{n!}{p(n)}\prod_{\ell=1}^n
\left(\frac{m_\ell}{\ell!}\right)^{k_\ell}\myn\frac{1}{k_\ell\myp!}\mypp,\qquad
\sum_{\ell=1}^n \ell k_\ell=n\myp.
$$

A simple subclass of assemblies is obtained by setting $m_\ell\equiv
m\in\NN$, which may be interpreted as equiprobable colored set
partitions with labeled elements $\{1,\dots,n\}$; the case $m=1$
thus corresponds to plain set partitions with uniform distribution.

\subsubsection{Multisets.}\label{sec6.1.2}
This class is determined by the generating function \cite[\S\myp2.2,
p.\,47]{ABT}
\begin{equation*}
\mathcal{F}(z):=\sum_{n=0}^\infty p(n)\myp z^n=\prod_{\ell=1}^\infty
(1-z^{\ell})^{-m_\ell},
\end{equation*}
which satisfies the multiplicative definition \eqref{eq:F0} with
\begin{equation*}
\mathcal{F}_{\myn\ell\myp}(z):=(1-z)^{-m_\ell}\mynn=\exp\!\left(m_\ell
\sum_{j=1}^\infty \frac{z^j}{j}\right),\qquad \ell\in\NN,
\end{equation*}
and the corresponding coefficients
\begin{equation*}
c_k^{(\ell)}=\binom{m_\ell+k-1}{k},\qquad k\in\ZZ_+\myp.
\end{equation*}

Here, the joint distribution of $\nu_\ell$'s is given by (see
\cite[Eqs.\ (2.3), (2.9)]{ABT})
$$
\PP_n\{\nu_\ell=k_\ell,\
\ell=1,\dots,n\}=\frac{1}{p(n)}\prod_{\ell=1}^n
\binom{m_\ell+k_\ell-1}{k_\ell}, \qquad \sum_{\ell=1}^n \ell
k_\ell=n\myp.
$$

The particular case $m_\ell\equiv m\in\NN$ corresponds to weighted
integer partitions, which for $m=1$ is reduced to the plain
(unrestricted) partitions with uniform distribution on the
space~$\CP_n$\myp.

\subsubsection{Selections.}\label{sec6.1.3}
This class is defined by the generating function \cite[\S\myp2.2,
p.\,47]{ABT}
\begin{equation*}
\mathcal{F}(z):=\sum_{n=0}^\infty p(n)\myp z^n=\prod_{\ell=1}^\infty
(1+z^{\ell})^{m_\ell}.
\end{equation*}
Hence, $\mathcal{F}(z)$ satisfies the definition \eqref{eq:F0} with
\begin{equation*}
\mathcal{F}_{\myn\ell\myp}(z):=(1+z)^{m_\ell}=\exp\!\left(m_\ell
\sum_{j=1}^\infty
 \frac{(-z)^j}{j}\right)\!,\qquad \ell\in\NN,
\end{equation*}
and the coefficients
\begin{equation*}
c_k^{(\ell)}=\binom{m_\ell}{k},\qquad k=0,1,\dots,m_\ell\myp.
\end{equation*}

The joint distribution of $\nu_\ell$'s is given by (see \cite[Eqs.\
(2.4), (2.12)]{ABT})
$$
\PP_n\{\nu_\ell=k_\ell,\
\ell=1,\dots,n\}=\frac{1}{p(n)}\prod_{\ell=1}^n
\binom{m_\ell}{k_\ell}\myp,\qquad \sum_{\ell=1}^n \ell k_\ell=n\myp.
$$

The case $m_\ell\equiv m\in\NN$ entails integer partitions with part
counts capped by $m$; for $m=1$ this is further reduced to strict
partitions (i.e., with distinct parts) under the uniform
distribution on the corresponding space $\check{\CP}_n$\myp.
\subsection{The generating functions}\label{sec6.2}
In this section, we introduce six examples by specifying the
generating function $\mathcal{F}_0(u)=\sum_{k=0}^\infty c_k\myp u^k$
and the corresponding function
$H_0(u)=\ln\myn(\mathcal{F}_0(u))=\sum_{k=1}^\infty a_k\myp u^k$.
Although the associated multiplicative measures $Q_z$ and $P_n$ are
defined primarily in terms of the coefficients $\{c_k\}$ (see
\eqref{Q1} and \eqref{eq:P_n}, respectively), the explicit
expressions for $c_k$'s may be complicated, so we will not always
attempt to give such expressions.

For our purposes, it is more important to focus on the function
$H_0(u)$ and its power expansion coefficients $\{a_k\}$, since these
are the ingredients that determine the existence and exact form of
the limit shape $\omega^*(x)=\gamma^{-1}H_0(\rme^{-\gamma x})$
(see~\eqref{eq:omega}), including the parameter $\gamma$ (see
\eqref{eq:kappa} and~\eqref{eq:kappa1}). In particular, we have to
check the basic condition $A^+(1)<\infty$ (see
Assumption~\ref{as:z}), as well as the refined condition
$A^+(\frac12)<\infty$ and Assumption \ref{as:7.1}, both needed for
the limit shape result under the measure $\PP_n$ (see
Theorem~\ref{th:LSP}).

\begin{example}\label{ex:1}
For $r\in(0,\infty)$, $\rho\in(0,1]$, set
\begin{equation}\label{eq:F01}
\mathcal{F}_0(u):=(1-\rho\myp u)^{-r}, \qquad |u|<\rho^{-1}.
\end{equation}
By the binomial formula, the coefficients in the power series
expansion \eqref{eq:F0} are given by
\begin{equation}\label{eq:b-k-r}
c_k:=\binom{r+k-1}{k}\myp\rho^k=\frac{r(r+1)\cdots(r+k-1)}{k!}\,
\rho^k\myp,\qquad k\in\ZZ_+\myp.
\end{equation}
In particular, $c_0=1$ and, moreover, $c_k>0$ for all $k\in\NN$.

\begin{remark}
The parameter $\rho<1$ introduces exponential weights of the part
counts, which discourages multiple occurrences of the same part as
compared to the neutral case $\rho=1$. The parameter $r$ also
contributes to the weighting; e.g., if $\rho=1$ then $c_{k+1}/c_k>1$
whenever $r>1$. The combined effect of the parameters $\rho<1$ and
$r>\rho^{-1}>1$ is more interesting: it is easy to see that the
maximum of the sequence $c_k$ is attained for (integer) $k=k^*$ near
$(r\rho-1)/(1-\rho)$.
\end{remark}

For $\rho=1$ and $r=m\in\NN$, the formula \eqref{eq:F01} pinpoints a
multiset structure (see Section~\ref{sec6.1.2}) arising via
partitioning an integer $n\in\NN$ into parts, each of which is then
colored in one of $m$ different colors, irrespectively of its size.
The simplest case $\rho=1$, $r=1$ thus corresponds to the classical
ensemble of uniform integer partitions mentioned in the Introduction
(Sections~\ref{sec1.1},~\ref{sec1.2}).

Note that formula \eqref{Q} for the $\QQ_z$-distribution of the part
counts $\nu_{\myn\ell}$ ($\ell\in\NN$) specializes to
\begin{equation}\label{Qb1}
\QQ_{z}\{\nu_\ell=k\}=\binom{r+k-1}{k}\, \rho^k z^{k\ell}\myp(1-\rho
z^\ell)^r,\qquad k\in\ZZ_+\myp,
\end{equation}
which is a negative binomial distribution with parameters $r$ and
$p=1-\rho z^\ell$ \myp\cite[Ch.\,VI, \S\myp8, p.\,165]{Feller1}. If
$r=1$ then $\mathcal{F}_0(u)=(1-\rho\myp u)^{-1}$, \,$c_k=\rho^k$,
and \eqref{Qb1} is reduced to a geometric distribution
\begin{equation*}
\QQ_{z}\{\nu_\ell=k\}= \rho^k z^{k\ell}\myp(1-\rho z^\ell),\qquad
k\in\ZZ_+\myp.
\end{equation*}
In the latter case, from \eqref{eq:b-Gamma-mult} we get
\begin{equation}\label{eq:r=1}
\PP_n(\lambda)=\mathfrak{C}_n^{-1}\rho^{\myp Y_{\lambda}(0)},\qquad
\lambda\in\CP_n\myp,
\end{equation}
where $Y_\lambda(0)= \sum_{\ell=1}^\infty\myn \nu_\ell$ is the total
number of parts in partition
$\lambda=(1^{\nu_1}2^{\nu_2}\mynn\dots)$
\mypp(cf.~\eqref{eq:Young}). If also $\rho=1$ then \eqref{eq:r=1} is
further reduced to the uniform distribution on $\CP_n$\myp.

Returning to the general case, from \eqref{eq:F01} we have
\begin{equation}\label{eq:H01}
H_0(u)=-r\ln\myn(1-\rho\myp u)=r\sum_{k=1}^\infty
\frac{\,\rho^k}{k}\mypp u^k.
\end{equation}
Since the coefficients in the expansion \eqref{eq:H01} are positive,
Assumption \ref{as:7.1} is satisfied by Remark~\ref{rm:a1>0}; also,
it readily follows that $A^+(\sigma)<\infty$ for any $\sigma>0$ (and
each $\rho\in(0,1]$).
\end{example}

\begin{example}\label{ex:2}
For \,$m\in\NN$, \,$\rho\in(0,1]$, consider the generating function
\begin{equation}\label{eq:F02}
\mathcal{F}_0(u):=(1+\rho\myp u)^m,\qquad u\in\CC,
\end{equation}
with the coefficients
\begin{equation*}
c_k=\binom{m}{k}\myp\rho^k=\frac{m(m-1)\cdots(m-k+1)}{k!}\,\rho^k,\qquad
k=0,1,\dots,m.
\end{equation*}
Consequently, formula \eqref{Q} gives a binomial distribution
\begin{equation}\label{Qb2}
\QQ_{z}\{\nu_\ell=k\}=\binom{m}{k} \frac{\rho^k z^{k
\ell}}{\left(1+\rho z^\ell\right)^{m}}\myp,\qquad k=0,1,\dots,m,
\end{equation}
with parameters $m$ and $p=\rho z^\ell(1+\rho z^\ell)^{-1}$.

Setting $\rho=1$ in \eqref{eq:F02} yields selections (see
Section~\ref{sec6.1.3}) corresponding to integer partitions with
multiplicities $\nu_\ell\le m$ ($\ell\in\NN$); in particular, $m=1$
corresponds to strict partitions (see Sections
\ref{sec1.1},~\ref{sec1.2}). More generally, for $m=1$ and
$0<\rho\le1$, the measure $Q_z$ is concentrated on the subspace
$\check{\CP}\subset\CP$ with the distribution \eqref{Qb2} reduced to
\begin{equation*}
\QQ_{z}\{\nu_\ell=k\}= \frac{\rho^k z^{k\ell}}{1+\rho
z^\ell}\myp,\qquad k=0,1.
\end{equation*}
Accordingly, formula \eqref{eq:b-Gamma-mult} specifies on
$\check{\CP}_n$ the weighted distribution (cf.~\eqref{eq:r=1})
\begin{equation*}
\PP_n(\lambda)=\check{\mathfrak{C}}_n^{-1}\rho^{\myp
Y_\lambda(0)},\qquad \lambda\in \check{\CP}_{n\myp},
\end{equation*}
which is reduced to the uniform distribution if $\rho=1$, as already
mentioned.

In the general case, from \eqref{eq:F02} it follows
\begin{equation}\label{eq:H02}
H_0(u)=m\myp\ln\myn(1+\rho\myp u)=m\sum_{k=1}^\infty
\frac{(-1)^{k-1}\myp \rho^k}{k}\, u^k,
\end{equation}
and it is evident that $A^+(\sigma)<\infty$ for each $\sigma>0$ (and
any $\rho\in(0,1]$). Finally, let us verify Assumption \ref{as:7.1}.
Using \eqref{eq:H02} we obtain, for any $\theta\in(0,1)$ and all
$t\in\RR$\myp,
\begin{align*}
H_0(\theta)-\Re\myp(H_0(\theta\mypp\rme^{\rmi t}))
&=m\myp\ln\myn(1+\rho \mypp\theta)-m\mypp\Re\myp\bigl(\ln\myn(1+\rho
\mypp\theta\mypp\rme^{\rmi t})\bigr)\\
&=m\myp\ln\myn(1+\rho\mypp\theta)-m\myp\ln\myn|1+\rho\mypp\theta\mypp\rme^{\rmi t}|\\
&=-\frac{m}{2}\ln\!\myn\left(\frac{1+2\rho\mypp\theta\cos t+\rho^2\theta^2}{(1+\rho\mypp\theta)^2}\right)\\
&\ge -\frac{m}{2}\left(\frac{1+2\rho\mypp\theta\cos t+
\rho^2\theta^2}{(1+\rho\mypp\theta)^2}-1\right)\\
&= \frac{m\rho\,\theta\mypp(1-\cos t)}{(1+\rho\mypp\theta)^2}\ge
\frac{m\rho}{(1+\rho)^2}\,\theta\mypp(1-\cos t)\myp.
\end{align*}
Thus, the inequality \eqref{eq:<a1} holds with
$\delta_*=m\rho/(1+\rho)^2>0$.
\end{example}

\begin{example}\label{ex:3}
For $b\in(0,\infty)$, \,$\rho\in[0,1]$, consider the generating
function
\begin{equation}\label{eq:F03}
\mathcal{F}_0(u):=\exp\!\mynn\left(\frac{b\myp u}{1-\rho\myp
u}\right), \qquad |u|<\rho^{-1}.
\end{equation}
Noting that $(1-t)^{-1}=\sum_{k=0}^\infty t^k$ (with $t=\rho s$), it
is evident that the coefficients $c_{k}$'s in the power series
expansion of the function \eqref{eq:F03} are positive, with $c_0=1$,
\,$c_1=b$, \,$c_2=b\rho+\frac12\myp b^2$, etc. More systematically,
by Fa\`a di Bruno's formula generalizing the chain rule of
differentiation to higher derivatives (see \cite[Ch.\:I, \S12,
p.\:34]{Jordan}) we obtain
\begin{equation}\label{eq:Faa}
c_k=\sum_{m=1}^k  b^{\myp m}\rho^{\myp
k-m}\!\sum_{(j_1\myn,\dots,\myp j_k)
\myp\in\myp\mathcal{J}_m}\frac{1}{j_1\myn!\cdots j_k !}\mypp, \qquad
k\in\NN,
\end{equation}
where $\mathcal{J}_m$ is the set of all nonnegative integer
$k$-tuples $(j_1,\dots,j_k)$ such that $j_1+\dots+j_k=m$ and
$j_1+2j_2+\dots+k\myp j_k =k$.

\begin{remark}\label{rm:1-to-1}
Note that the $k$-tuples $(j_1,\dots,j_k)\in\mathcal{J}_m$ are in
one-to-one correspondence with partitions of $k$ involving precisely
$m$ different integers as parts, where each element $j_\ell$ has the
meaning of the multiplicity of part $\ell\in\{1,\dots,k\}$.
\end{remark}

\begin{remark}\label{rm:Faa}
For $b=\rho=1$, the formula \eqref{eq:Faa} is reduced, on account of
Remark \ref{rm:1-to-1}, to
$$
c_k=\sum_{m=1}^k \sum_{(j_1\myn,\dots,\myp j_k)
\myp\in\myp\mathcal{J}_m}\frac{1}{j_1\myn!\cdots j_k
!}=\sum_{\lambda\vdash k}\frac{1}{\nu_1\myn!\cdots \nu_k !}\mypp,
\qquad \lambda=(1^{\nu_1} 2^{\nu_2}\mynn\dots)\in\CP_k.
$$
From the formula \eqref{eq:nu=k} with $m_\ell=\ell!$\myp, it follows
that the quantity $p(k)=k!\myp c_k$ equals the number of partitions
of the set $\{1,\dots,k\}$ into components with \emph{ordered}
contents. Such a structure may be visualized as a \emph{forest of
linear rooted trees} (i.e., a disjoint union of connected directed
acyclic graphs, where each vertex has at most two neighbors), with
labeled vertices.
\end{remark}

The observation made in Remark \ref{rm:Faa} can be explained without
calculations using the general theory of assemblies (see
Section~\ref{sec6.1.1}). Namely, the function \eqref{eq:F03} with
$b=\rho=1$ may be represented in the exponential form
\eqref{eq:exp(M)1} by setting $m_\ell:=\ell!$ \myp($\ell\in\NN$),
\begin{equation}\label{eq:assemblyEx3}
\mathcal{F}_0(u)=\exp\!\mynn\left(\frac{u}{1-u}\right)
=\exp\!\mynn\left(\myn\sum_{\ell=1}^\infty
u^\ell\right)=\exp\!\mynn\left(\myn\sum_{\ell=1}^\infty
\frac{m_\ell\, u^\ell}{\ell!}\right)\mynn.
\end{equation}
In the terminology of Section \ref{sec6.1}, that is to say that in
the corresponding assembly each part of size $\ell$ is colored in
one of $m_\ell=\ell!$ different colors, which is equivalent to
ordering the content of this part in one of $\ell!$ ways. Hence, on
comparing the power series expansions \eqref{eq:F0} and
\eqref{eq:exp(M)1} for $\mathcal{F}_0(u)$, we conclude that
$c_k=p(k)/k!$, where $p(k)$ equals the total number of instances of
such an assembly of size~$n$, in accord with Remark~\ref{rm:Faa}.

\smallskip
If $\rho=0$ then the generating function \eqref{eq:F03} is reduced
to $\mathcal{F}_0(u)=\rme^{\myp b\myp u}$, with the expression
\eqref{eq:Faa} simplified to $c_k=b^k/k!$ \myp($k\in\ZZ_+$). Hence,
according to \eqref{Q} the counts $\nu_\ell$ in a random partition
$\lambda=(1^{\nu_1}2^{\nu_2}\mynn\dots)\in\CP$ have a Poisson
distribution with parameter \,$b\myp z^\ell$,
\begin{equation*}
\QQ_{z}\{\nu_\ell=k\}=\frac{b^k z^{k\ell}}{k!}\,\rme^{-b
z^\ell},\qquad k\in\ZZ_+\myp,
\end{equation*}
which leads to the distribution on $\CP_n$ of the form
(see~\eqref{eq:b-Gamma-mult})
\begin{equation*}
\PP_n(\lambda)=\mathfrak{C}_n^{-1}\prod_{\ell=1}^\infty
\frac{b^{\myp\nu_\ell}}{\nu_\ell!}\myp,\qquad \lambda\in\CP_n\myp.
\end{equation*}
The parameter $b>0$ here determines an exponential weighting: having
more parts of each size is either encouraged or discouraged
according as $b>1$ or $b<1$.

In the special case $\rho=0$, $b\in\NN$, the multiplicative ensemble
defined via the product formula \eqref{eq:gfF} admits a simple
combinatorial interpretation. Indeed, similarly to
\eqref{eq:assemblyEx3} the exponential identity \eqref{eq:exp(M)1}
with $m_\ell:=b\mypp \ell!$ \myp($\ell\in\NN$) determines an
assembly of size $n\in\NN$ obtained by partitioning the set
$\{1,\dots,n\}$ into non-empty blocks with ordered contents, each
block colored in one of $b$ different colors irrespectively of its
size.

\smallskip
Returning to the general formula \eqref{eq:F03}, we get
\begin{equation}\label{eq:H03}
H_0(u)=\frac{b\myp u}{1-\rho\myp u}=b\sum_{k=1}^\infty \,\rho^{k-1}
u^{k},
\end{equation}
and hence Assumption \ref{as:7.1} is automatic (see
Remark~\ref{rm:a1>0}); moreover, $A^+(\sigma)<\infty$ for any
$\sigma>0$, except for $\rho=1$ whereby $A^+(\sigma)<\infty$ only
with $\sigma>1$.
\end{example}

\begin{example}\label{ex:4}
Extending Example \ref{ex:3} (for simplicity, with $b=1$), let us
set for $r>0$, $r\ne1$ and $\rho\in(0,1]$
\begin{equation}\label{eq:beta4a}
\mathcal{F}_0(u):=\exp\!\mynn\left(\frac{u}{(1- \rho\myp
u)^r}\right)\!,\qquad |u|<\rho^{-1}.
\end{equation}
Taking the logarithm of \eqref{eq:beta4a} we get the power series
expansion (cf.~\eqref{eq:F01})
\begin{equation}\label{eq:H04}
H_0(u)=\frac{u}{(1-\rho\myp u)^r}=\sum_{k=1}^\infty
\binom{r+k-2}{k-1}\myp\rho^{k-1} u^{k},
\end{equation}
which has positive coefficients $a_k$ (cf.~\eqref{eq:b-k-r}). Hence,
Assumption \ref{as:7.1} is satisfied by virtue of
Remark~\ref{rm:a1>0}. To check the condition $A^+(\sigma)<\infty$,
observe using Stirling's asymptotic formula for the gamma function
(see \cite[\S12.5, p.\,130]{Cramer}) that
\begin{equation*}
a_k=\binom{r+k-2}{k-1}\myp\rho^{k-1}
=\frac{\Gamma(k+r-1)}{\Gamma(r)\mypp\Gamma(k)}\mypp\rho^{k-1}\sim
\frac{k^{\myp r-1}}{\Gamma(r)}\,\rho^{k-1}\qquad (k\to\infty),
\end{equation*}
hence $A^{+}(\sigma)<\infty$ for any $\sigma>0$ if $\rho<1$, whereas
if $\rho=1$ then $A^{+}(\sigma)<\infty$ only for $\sigma>r$.

On substituting \eqref{eq:H04} into Taylor's expansion of the
exponential function in \eqref{eq:beta4a}, it is evident that the
corresponding coefficients $c_{k}$ in the power series \eqref{eq:F0}
of $\mathcal{F}_0(u)$ are also positive, with $c_0=c_1=1$,
$c_2=r\myn\rho+\frac12$, etc.; more generally, $c_k$'s can be
evaluated using Fa\`a di Bruno's formula like in Example \ref{ex:3},
but we omit the details.

However, the special case $\rho=1$, $r=m\in\NN$ may be given a
combinatorial interpretation as follows. Substituting the expansion
\eqref{eq:H04} into \eqref{eq:beta4a} and setting
$m_\ell:=\ell!\mypp \binom{m+\ell-2}{\ell-1}$ for $\ell\in\NN$
(cf.~\eqref{eq:assemblyEx3}), the identity \eqref{eq:exp(M)1}
applied to $\mathcal{F}_0(u)$ yields that the coefficients $c_k$ in
the power series expansion of \eqref{eq:beta4a} are expressed as
$c_k=p(k)/k!$\myp, where $p(k)$ is the total number of instances of
the corresponding assembly of size $k$. Construction of such an
assembly comprises three steps: (i) the set $\{1,\dots,k\}$ is
partitioned into non-empty blocks; (ii) a block with $\ell$ elements
is represented as a linear rooted tree (see Remark~\ref{rm:Faa})
distinguished by $\ell!$ permutations of its vertices; (iii)
\myp$\ell-1$ edges of such a tree are colored using $m$ colors,
subject to the convention that if $j$-th color is used $i_j\ge0$
times (with $i_1+\dots+i_{m}=\ell-1$) then the color schemes are
distinguishable only if the corresponding \strut{}$m$-tuples
$(i_1,\dots,i_{m})$ are not identical, making the total number of
such schemes equal to $\binom{m+\ell-2}{\ell-1}$ (see \cite[Ch.\,II,
\S\myp5, p.\,38]{Feller1}).
\end{example}

\begin{example}\label{ex:5}
Combining the exponential form of Example \ref{ex:4} with the
generating function from Example \ref{ex:2}, for $\rho\in[0,1]$,
\,$m\in\NN$ consider
\begin{equation}\label{eq:F05}
\mathcal{F}_0(u):=\exp\mynn\bigl(u\myp(1+\rho\myp
u)^{m-1}\bigr),\qquad u\in\CC.
\end{equation}
Since $u\mapsto u\myp(1+\rho\myp u)^{m-1}$ is a polynomial of degree
$m$ with positive coefficients, it follows that the coefficients
$c_k$ in the power series expansion of the function \eqref{eq:F05}
are positive for all $k\in\ZZ_+$\myp.
\begin{remark}\label{rm:real_r}
Caution is needed with a \emph{non-integer} $r>1$ replacing
$m\in\NN$ in \eqref{eq:F05}: e.g., for $\rho=1$, \mypp$r=1.5$ we
obtain (with the help of
\texttt{Maple\myp\footnotesize\texttrademark})
$c_9=-921479/92897280<0$.
\end{remark}

From \eqref{eq:F05} by the binomial formula we obtain
\begin{equation}\label{eq:H05}
H_0(u)=u\myp(1+\rho\myp
u)^{m-1}=\sum_{k=1}^{m}\binom{m-1}{k-1}\myp\rho^{k-1} u^{k},
\end{equation}
so that the corresponding coefficients $a_k$'s are positive for
$k=1,\dots,m$ and vanish for $k\ge m+1$. Hence, Assumption
\ref{as:7.1} is satisfied and $A^+(\sigma)<\infty$ for any
$\sigma>0$.

In the special case $\rho=1$, it is not hard to give a combinatorial
interpretation of the coefficients $c_k$ by adapting considerations
in Examples \ref{ex:3} and~\ref{ex:4}. Indeed, substituting the
expansion \eqref{eq:H05} back into \eqref{eq:F05} and defining
$m_\ell:=\ell!\mypp\binom{m-1}{\ell-1}$ for $\ell=1,\dots,m$ and
$m_\ell\equiv 0$ for $\ell\ge m+1$, similarly as above we can use
the exponential identity \eqref{eq:exp(M)1} to conclude that
$c_k=p(k)/k!$\myp, where $p(k)$ is the total number of assemblies of
size $k$ constructed as follows: (i) the set $\{1,\dots,k\}$ is
partitioned into blocks of size \emph{not bigger than $m$} each;
(ii) a block of size $\ell$ is arranged as a rooted linear tree with
labeled vertices (resulting in $\ell!$ possible permutations); (iii)
the total of $m$ unlabeled tokens is allocated to $\ell$ consecutive
vertices on such a tree according to an integer $\ell$-tuple
$(m_1,\dots,m_\ell)$ subject to the conditions
$m_1+\dots+m_{\ell}=m$ and $m_i\ge1$ for all $i=1,\dots,\ell$ (so
that all $m$ tokens are allocated and each vertex gets at least one
token); the total number of such (strict) allocations is known to be
given by $\binom{m-1}{\ell-1}$ (see \cite[Ch.\,II, \S\myp5,
p.\,38]{Feller1}).
\end{example}

\begin{example}\label{ex:6}
For $\rho\in(0,1]$, \myp$r\in(0,\infty)$, consider the generating
function
\begin{equation}\label{eq:F06}
\mathcal{F}_0(u):=\left(\frac{-\ln\myn(1-\rho\myp u)}{\rho\myp
u}\right)^r\!\equiv (f_0(u))^r,\qquad |u|<\rho^{-1},
\end{equation}
where
\begin{equation}\label{eq:f0}
f_0(u):=\frac{-\ln\myn(1-\rho\myp u)}{\rho\myp
u}=1+\sum_{k=1}^\infty \frac{\rho^k u^{k}}{k+1}\myp.
\end{equation}
If $r=m\in\NN$ then from \eqref{eq:f0} it is evident that the
coefficients $c_k$ in the power series expansion of
$\mathcal{F}_0(u)$ in \eqref{eq:F06} are positive for all
$k\in\ZZ_+$\myp; however, for non-integer $r>0$ this is not so
clear, since the binomial expansion of $t\mapsto (1+t)^r$ involves
negative terms (cf.\ Remark~\ref{rm:real_r}). Yet, as a matter of
fact, the positivity of $c_k$'s holds for \emph{any real} $r>0$ ---
this will be established in Corollary \ref{cor:0<c<}.

By a term-by-term comparison, it is also clear that, for any $r>0$,
\begin{equation}\label{eq:F<F}
\mathcal{F}_0(u)=\left(1+\sum_{k=1}^\infty \frac{\rho^k
u^{k}}{k+1}\right)^{\myn r}\mynn\le \left(1+\sum_{k=1}^\infty \rho^k
u^{k}\right)^{\myn r}\mynn=(1-\rho\myp u)^{-r},\qquad 0\le
u<\rho^{-1},
\end{equation}
with the inequality being strict for $u>0$. That is to say, the
function $\mathcal{F}_0(u)$ is bounded by a multiset-type generating
function \eqref{eq:F01} considered in Example \ref{ex:1}. Moreover,
\emph{for integer} $r=m\in\NN$, by expanding both parts in
\eqref{eq:F<F} it is evident that the coefficients $c_k$ in the
power series expansion of $\mathcal{F}_0(u)$ are dominated by the
coefficients of the multiset generating function $(1-\rho\myp
u)^{-m}$  (cf.~\eqref{eq:b-k-r}),
\begin{equation}\label{eq:b-k-r-Ex6}
c_k<\binom{m+k-1}{k}\myp\rho^k=\frac{m\myp(m+1)\cdots(m+k-1)}{k!}\,
\rho^k,\qquad k\in\NN.
\end{equation}
Thus, the multiplicative ensemble determined by \eqref{eq:F06} may
be viewed (at least for integer $r=m$) as a \emph{discounted
multiset ensemble}, whereby larger values of each count $\nu_\ell=k$
are progressively discouraged. Again, this statement turns out to be
true for \emph{any real\/ $r>0$}, which will be explained below (see
Corollary \ref{cor:0<c<}). On the other hand, a direct combinatorial
interpretation of the generating function \eqref{eq:F06} (say, in
the spirit of the previous examples) is not clear, even in the
simplest case $r=\rho=1$.

Let us now look at the function $H_0(u)=\ln\myn(\mathcal{F}_0(u))$
(see~\eqref{eq:H}): according to \eqref{eq:F06},
\begin{equation}\label{eq:H06}
H_0(u)=r\ln\!\myn\left( \frac{-\ln\myn(1-\rho\myp u)}{\rho\myp u}
\right) =r\ln\myn(f_0(u)),\qquad |u|<\rho^{-1}.
\end{equation}
The next proposition implies that $ A^+(\sigma)<\infty$ for any
$\sigma>0$ (including the case $\rho=1$); furthermore, since all
$a_k>0$, by Remark~\ref{rm:a1>0} it follows that Assumption
\ref{as:7.1} is satisfied.
\end{example}

\begin{proposition}\label{pr:a-ex5}
The coefficients $\{a_k\}$ in the power series expansion
\eqref{eq:H} of the function \eqref{eq:H06} satisfy the inequalities
\begin{equation}\label{eq:<a<}
\frac{r\rho^k}{k^2\myp(k+1)}\le a_k\le\frac{r\rho^k}{k+1}\myp,\qquad
k\in\NN.
\end{equation}
In particular, $a_k>0$ for all $k\in\NN$.
\end{proposition}

\begin{proof}
Differentiation of the identity $r\ln\myn(f_0(u))=\sum_{j=1}^\infty
a_j\myp u^j$ (see \eqref{eq:H06}) gives
\begin{equation}\label{eq:dif-once}
r f_0^{\prime}(u)=f_0(u)\sum_{j=1}^\infty ja_j\myp u^{j-1}.
\end{equation}
Differentiating \eqref{eq:dif-once} again $k-1$ times ($k\ge1$), by
the Leibniz rule we obtain
\begin{equation}\label{eq:f'(0)}
f_0^{(k)}(0)=\frac{1}{r}\sum_{i=0}^{k-1} \binom{k-1}{i}\myp
f_0^{(k-1-i)}(0)\, (i+1)! \,a_{i+1},\qquad k\in\NN.
\end{equation}
Noting from \eqref{eq:f0} that $f_0^{(j)}(0)=\rho^j j!\myp/(j+1)$
($j\in\ZZ_+$) and using the shorthand notation $\tilde{a}_k:=k
a_k\rho^{-k}\myn /r$ \myp($k\in\NN$), the system of equations
\eqref{eq:f'(0)} is reduced to
\begin{equation}\label{eq:a_m+1}
\frac{k}{k+1}=\sum_{i=0}^{k-1}
\frac{\tilde{a}_{i+1}}{k-i}\myp,\qquad k\in\NN,
\end{equation}
while the inequalities \eqref{eq:<a<} are rewritten as
\begin{equation}\label{eq:<a<*}
\frac{1}{k\myp(k+1)}\le \tilde{a}_k\le \frac{k}{k+1}\myp,\qquad
k\in\NN.
\end{equation}

Let us prove \eqref{eq:<a<*} by induction in $k\in\NN$. For $k=1$,
from \eqref{eq:a_m+1} we find $\tilde{a}_1=\frac12$ and the claim
\eqref{eq:<a<*} is obviously satisfied. Suppose now that the
inequalities \eqref{eq:<a<*} hold for $\tilde{a}_1,\dots,
\tilde{a}_{k-1}$ ($k\ge2$), which entails that $\tilde{a}_i>0$
($i=1,\dots,k-1$). Then the recursion \eqref{eq:a_m+1} (with $k$
replaced by $k-1$) implies
\begin{align*}
\frac{k}{k+1} &=\sum_{i=0}^{k-2}
\frac{\tilde{a}_{i+1}}{k-i}+\tilde{a}_{k}\le \sum_{i=0}^{k-2}
\frac{\tilde{a}_{i+1}}{k-1-i}+\tilde{a}_{k}=\frac{k-1}{k}+\tilde{a}_{k},
\end{align*}
and it follows that
\begin{equation*}
\tilde{a}_{k}\ge
\frac{k}{k+1}-\frac{k-1}{k}=\frac{1}{k\myp(k+1)}\myp,
\end{equation*}
which gives the lower bound in \eqref{eq:<a<*}. On the other hand,
again using that $\tilde{a}_1,\dots, \tilde{a}_{k-1}>0$, from
\eqref{eq:a_m+1} we get
\begin{equation*}
\frac{k}{k+1}=\tilde{a}_{k}+\sum_{i=0}^{k-2}
\frac{\tilde{a}_{i+1}}{k-i}\ge \tilde{a}_{k},
\end{equation*}
which proves the upper bound in \eqref{eq:<a<*}. Thus, the claim
\eqref{eq:<a<*} is verified for the $\tilde{a}_k$, and therefore it
is valid with all $k\in\NN$.
\end{proof}

\begin{corollary}\label{cor:0<c<}
For any real $r>0$, the coefficients $c_k$ in the power series
expansion of the generating function \eqref{eq:F06} satisfy the
two-sided bounds \textup{(}cf.~\eqref{eq:b-k-r-Ex6}\textup{)}
\begin{equation}\label{eq:b-k-r-Ex6-r}
0<c_k<\binom{r+k-1}{k}\myp\rho^k, \qquad k\in\NN.
\end{equation}
\end{corollary}

\begin{proof}
Using the expansion $H_0(u)=\sum_{k=1}^\infty\myn a_k u^k$ we have
\begin{equation}\label{eq:F-exp(H)}
\mathcal{F}_0(u)=\exp\myn(H_0(u))=\exp\!\left(\sum_{k=1}^\infty a_k
u^k\right)=1+\sum_{k=1}^\infty c_k u^k.
\end{equation}
By Proposition \ref{pr:a-ex5} all $a_k>0$, and since Taylor's
coefficients of the exponential function are positive as well, it is
evident from \eqref{eq:F-exp(H)} that $c_k>0$ for all $k\in\NN$.

Furthermore, from the bounds \eqref{eq:<a<} we get
\begin{equation}\label{eq:a<1/k}
0<a_k\le \frac{r\rho^k}{k+1}<\frac{r\rho^k}{k}\myp,\qquad k\in\NN.
\end{equation}
Considering the corresponding power series and their exponentials
\begin{equation*}
\exp\!\left(\myn\sum_{k=1}^\infty a_k u^k\right)=
\exp\mynn\bigl(H_0(u)\bigr)\equiv\mathcal{F}_0(u)
\end{equation*}
and
\begin{equation*}
\exp\!\left(\myn\sum_{k=1}^\infty \frac{\,r\rho^k }{k}\,u^k
\right)=\exp\myn\bigl(-r\ln\myn(1-\rho\myp u) \bigr)=(1-\rho\myp
u)^{-r}=:\widetilde{\mathcal{F}}(u),
\end{equation*}
it follows from the term-by-term subordination \eqref{eq:a<1/k} that
the coefficients in the respective power series expansions
$\mathcal{F}_0(u)=\sum_k c_ku^k$ and
$\widetilde{\mathcal{F}}(u)=\sum_k \tilde{c}_ku^k$ inherit the same
(strict) subordination, that is, $c_k<\tilde{c}_k$ for all
\strut{}$k\in\NN$. It remains to notice that (cf.\ \eqref{eq:F01},
\eqref{eq:b-k-r}) $\tilde{c}_k=\binom{r+k-1}{k}\myp\rho^k$, which
yields the upper bound in~\eqref{eq:b-k-r-Ex6-r}, as claimed.
\end{proof}

For convenience, the results of Section \ref{sec6.1.2} are
summarized in Table~\ref{t1}.

\begin{table}[ht]
\begin{center}
\centerline{\parbox{.92\textwidth}{\caption{The generating functions
in Examples \ref{ex:1}--\mypp\ref{ex:6} \,(\myp$0<\rho\le 1$,
\,$r>0$, \,$m\in\NN$, \,$b>0$\myp). Third column shows the type of
singularity of $\mathcal{F}_0(u)$ (with $\rho=1$, \,$r=m\in\NN$) at
point $u=1$.}\label{t1}}}  \vspace{.4pc} \footnotesize
\tabcolsep=0.22pc
\begin{tabular}{|c|c|c|c|c|c|}
\hline &&&&&\\[-.7pc]
\,No.&$\displaystyle\mathcal{F}_0(u)$
    &
    $u=1$&$\displaystyle H_0(u)$
     &$a_k \;(k\in\NN)$
      &$\displaystyle A^+(\sigma)<\infty$
       \\[.2pc]
\hline &&&&&\\[-.9pc]
\hline &&&&&\\[-.8pc]
\ref{ex:1}
   &$\displaystyle(1-\rho\myp u)^{-r}$
    &$\begin{array}{c}
    \text{pole of}\\[-.1pc]
    \text{order $m$}
    \end{array}$&$\displaystyle-r\ln\mynn(1-\rho\myp u)$
     &$r \myp k^{-1}\myn\rho^k$
      &$\sigma>0$
        \\[.5pc]
\hline &&&&&\\[-.8pc]
\ref{ex:2}
   &$\displaystyle (1+\rho\myp u)^{m}$
    &$\begin{array}{c}
        \text{regular}\\[-.1pc]
        \text{point}
     \end{array}$&$\displaystyle m\ln\mynn(1+\rho\myp u)$
     &$(-1)^{k-1}\myp r \myp k^{-1}\myn\rho^k$
      &$\displaystyle\sigma>0$
       \\[.3pc]
\hline &&&&&\\[-.7pc]
\ref{ex:3}
   &$\displaystyle\exp\mynn\biggl(\mbox{\footnotesize$\displaystyle
                                    \frac{b\myp u}{1- \rho\myp u}$}
                           \biggr)$
    &$\begin{array}{c}
         \text{essential}\\[-.1pc]
          \text{singularity}
      \end{array}$
    &$\displaystyle \frac{b\myp u}{1-\rho\myp u}$
     &$\displaystyle b\mypp \rho^{k-1}$
      &$\begin{array}{l}
               \sigma>0 \ \ (\rho<1)\\[.1pc]
               \sigma>1 \ \ (\rho=1)
        \end{array}$
        \\[.7pc]
\hline &&&&&\\[-.7pc]
\ref{ex:4}
   &$\ \displaystyle\exp\myn\biggl(\frac{u}{(1-\rho\myp u)^{r}}\biggr)$
    &$\begin{array}{c}
    \text{essential}\\[-.1pc]
      \text{singularity}
      \end{array}$
      &$\displaystyle \frac{u}{(1-\rho\myp u)^{r}}$
       &$\begin{array}{c}
       \mbox{\scriptsize$\left(\!\!\myn\begin{array}{c}
       r+k-2\\k-1\end{array}\!\!\right)$}\myp\rho^{k-1}\\[.6pc]
       \sim k^{\myp r-1}\myn\rho^{k-1}\mynn/\Gamma(r)
       \end{array}$
      &$\begin{array}{l}
              \sigma>0 \ \ (\rho<1)\\[.1pc]
              \sigma>r \ \ (\rho=1)
                 \end{array}$
       \\[1.1pc]
\hline &&&&&\\[-.7pc]
\ref{ex:5}
    &\ $\displaystyle\exp\mynn\bigl(u\myp(1+\rho\myp u)^{m-1}\bigr)$
    &$\begin{array}{c}
    \text{regular}\\[-.1pc]
      \text{point}
      \end{array}$
      &$\displaystyle u\myp(1+ \rho\myp u)^{m-1}$
       &$\begin{array}{c}
         \mbox{\scriptsize $
         \left(\!\!\mynn\begin{array}{c}m-1\\k-1
                     \end{array}\!\!\mynn\right)$}\myp\rho^{k-1}\\[.4pc]
         (k=1,\dots,m)
       \end{array}$
      &$\sigma>0$
        \\[1.0pc]
\hline &&&&&\\[-.7pc]
\ref{ex:6}
   &$\displaystyle\left(\frac{-\ln\mynn(1-\rho\myp u)}{\rho\myp u}\right)^r$
    &$\begin{array}{c}
    \text{branch}\\[-.15pc]
      \text{point}
      \end{array}$
    &$\displaystyle \ r\ln
    \!\mynn\left(\frac{-\ln\mynn(1-\rho\myp u)}{\rho\myp u}\right)
    $
     &$\displaystyle a_k=O(k^{-1}\mynn\rho^k)$
      &$\sigma>0$
       \\[.8pc]
\hline
\end{tabular}
\end{center}
\end{table}

\subsection{The limit shapes}\label{sec6.3}
In this section, for each of Examples \ref{ex:1}--\mypp\ref{ex:6} we
evaluate the parameter $\gamma=\sqrt{A(1)}$ (see \eqref{eq:kappa})
using for $A(1)$ either the definition \eqref{eq:A2-} or the
equivalent integral expression \eqref{eq:kappa1}, and then apply
Theorems \ref{th:LSQ} and \ref{th:LSP} to obtain the explicit limit
shape $\omega^*(x)$ as identified by the general formula
\eqref{eq:omega}. For the reader's convenience, the limit shape
function in Example~6.$i$ is denoted by $\omega^*_{i}(x)$
($i=1,\dots,6$), and the results are summarized in Table~\ref{t2}.
\begin{table}[ht]
\begin{center}
\caption{The limit shapes in Examples \ref{ex:1}--\mypp\ref{ex:6}
\,(\myp$0<\rho\le 1$, \,$r>0$, \,$m\in\NN$, \,$b>0$\myp).}
\label{t2} \vspace{.4pc} \footnotesize
\tabcolsep=.4pc
\begin{tabular}[h]{|c|c|c|c|c|}
\hline &&&&\\[-.6pc]
\,No.
  &$\displaystyle H_0(u)$
   &$\gamma^2$
    &$\displaystyle \omega_i^*(x)$
     &Special case
\\[.2pc]
\hline &&&&\\[-.9pc]
\hline &&&&\\[-.7pc]
\ref{ex:1}
   &$\displaystyle-r\ln\mynn(1-\rho\myp u)$
    &$\displaystyle r\Li_2(\rho)$
     &$\displaystyle -\frac{r\myp\ln\mynn(1-\rho\mypp\rme^{-\gamma x})}{\gamma}$
      &$\begin{array}{c}
          r=\rho=1\mynn\!:\,\\
          \mynn\gamma=\pi/\sqrt{6}
        \end{array}$
\\[.7pc]
\hline &&&&\\[-.7pc]
\ref{ex:2}
   &$\displaystyle m\myp\ln\mynn(1+\rho\myp u)$
    &$\ \displaystyle
     -m\Li_2(-\rho)$
     &$\displaystyle\frac{m\myp\ln\mynn(1+\rho\mypp\rme^{-\gamma x})}{\gamma}$
      &\!$\begin{array}{c}
                         \rho=1, \,m=1\mynn\!:\\
                         \gamma=\pi/\sqrt{12}
                      \end{array}$
      \\[.7pc]
\hline &&&&\\[-.7pc]
\ref{ex:3}
   &$\displaystyle \frac{b\myp u}{1- \rho\myp u}$
    &$\displaystyle-\frac{b\myp\ln\mynn(1-\rho)}{\rho}$
     &$\displaystyle\frac{b\,\rme^{-\gamma x}}{\gamma\mypp(1-\rho\mypp\rme^{-\gamma x})}$
      &$\begin{array}{c}
                 \rho\to0^+{:}\\[.1pc]
                 \gamma=\sqrt{b}
               \end{array}$
       \\[.8pc]
\hline &&&&\\[-.7pc]
\ref{ex:4}
   &$\displaystyle \frac{u}{(1-\rho\myp u)^r}$
    &$\displaystyle
    \frac{1-(1-\rho)^{1-r}}{\rho\myp(1-r)}$
     &$\displaystyle\frac{\rme^{-\gamma x}}
                         {\gamma\mypp(1-\rho\mypp\rme^{-\gamma
                         x})^r}$
      &$\begin{array}{c}
                     \rho=1,\ r<1\myn\!:\\[.1pc]
                     \gamma=1/\sqrt{1-r}
                    \end{array}$
        \\[.8pc]
\hline &&&&\\[-.7pc]
\ref{ex:5}
   &$\displaystyle u\myp(1+ \rho\myp u)^{m-1}$
    &$\displaystyle\frac{(1+\rho)^{m}-1}{\rho\mypp m}$
     &$\displaystyle\frac{\rme^{-\gamma x}\mypp(1+\rho\mypp
                      \rme^{-\gamma x})^{m-1}}{\gamma}$
      &$\begin{array}{c}
                      \rho=1, \,m=2\mynn\!:\\[.1pc]
                      \gamma=\sqrt{3/2}
                     \end{array}$
       \\[.8pc]
\hline &&&&\\[-.7pc]
\ref{ex:6}
   &\ $\displaystyle r\ln\!\myn\mynn\left(\frac{-\ln\mynn(1-\rho\myp u)}{\rho\myp u}\right)$
    &$\int_0^1\myn u^{-1} H_0(u)\,\dif{u}$
     &$\ \displaystyle\frac{r}{\gamma}\myp\ln\!\myn\mynn
        \left(\frac{-\ln\mynn(1-\rho\mypp\rme^{-\gamma x})}{\rho\mypp\rme^{-\gamma x}}
        \right)$
      &$\begin{array}{c}
                   \mypp r=\rho=1\mynn\!:\\[.1pc]
                   \gamma\doteq0.853636
                  \end{array}$
       \\[.8pc]
\hline
\end{tabular}
\end{center}
\end{table}

\smallskip
Starting with Example \ref{ex:1}, from \eqref{eq:kappa} and
\eqref{eq:H01} we have
\begin{equation}\label{eq:gamma1}
\gamma^2=r\sum_{k=1}^\infty \frac{\rho^k}{k^2}=r\Li_2(\rho),
\end{equation}
where $\Li_2(\cdot)$ is the \emph{dilogarithm} (see \cite{Lewin}).
Hence, the limit shape is given by (see~\eqref{eq:omega})
\begin{equation}\label{eq:LS1}
\fbox{\ $\displaystyle
\omega^*_1(x)=-\frac{r\vphantom{r^r}}{\gamma}\,\ln\myn(1-\rho\,\rme^{-\gamma
x})$}
\end{equation}
For $r=\rho=1$, \eqref{eq:gamma1} gives $\gamma^2=\Li_2(1)=\pi^2/6$,
and from \eqref{eq:LS1} we recover the classical formula
\eqref{eq:limit1} for the limit shape of partitions under the
uniform distribution on~$\CP_n$\myp.

\smallskip
Next, in Example \ref{ex:2} we have, according to
\eqref{eq:kappa} and~\eqref{eq:H02},
\begin{equation}\label{eq:gamma2}
\gamma^2=m\sum_{k=1}^\infty
\frac{(-1)^{k-1}\rho^k}{k^2}=-m\Li_2(-\rho)\equiv m\myp\bigl(
\Li_2(\rho)-\tfrac12 \Li_2(\rho^2)\bigr),
\end{equation}
and the limit shape \eqref{eq:omega} specializes to
\begin{equation}\label{eq:LS2}
\fbox{\ $\displaystyle
\omega^*_2(x)=\frac{m\vphantom{r^r}}{\gamma}\,\ln\myn(1+\rho\,\rme^{-\gamma
x})$}
\end{equation}
If $m=\rho=1$ then from \eqref{eq:gamma2} we find
$\gamma^2=\frac12\Li_2(1)=\pi^2/12$ and the equation \eqref{eq:LS2}
is reduced to the limit shape \eqref{eq:limit2} of uniformly
distributed strict partitions.

\smallskip
In Example \ref{ex:3}, according to \eqref{eq:kappa}
and~\eqref{eq:H03} we have for $\rho\in(0,1)$
\begin{equation}\label{eq:gamma3}
\gamma^2=b\sum_{k=1}^\infty
\frac{\rho^{k-1}}{k}=-\frac{b}{\rho}\myp\ln\myn(1-\rho)<\infty,
\end{equation}
while if $\rho=0$ then $\gamma^2=b$. Alternatively, we can obtain
the same result as \eqref{eq:gamma3} by using the integral formula
\eqref{eq:kappa1} with the expression \eqref{eq:H03} for $H_0(u)$,
\begin{equation}\label{eq:gamma3a}
\gamma^2=\int_0^1\mynn \frac{b}{1-\rho\myp
u}\,\dif{u}=-\frac{b}{\rho}\myp\ln\myn(1-\rho).
\end{equation}
In turn, equation \eqref{eq:omega} for the limit shape is reduced to
\begin{equation*}
\fbox{\ $\displaystyle \omega^*_3(x)=\frac{b\mypp\rme^{-\gamma
x}}{\gamma\myp(1-\rho\mypp\rme^{-\gamma x})}$}
\end{equation*}

\medskip Likewise, in Example \ref{ex:4} we get, similarly to
\eqref{eq:gamma3a},
\begin{equation}\label{eq:gamma4a}
\gamma^2=\int_0^1\mynn\frac{1}{(1-\rho\myp
u)^r}\,\dif{u}=\frac{1-(1-\rho)^{1-r}}{\rho\myp(1-r)}<\infty,
\end{equation}
which holds for $0<\rho<1$ (and any $r\ne1$). In the special case
$\rho=1$ the computation in \eqref{eq:gamma4a} is modified as
follows,
\begin{equation*}
\gamma^2=\int_0^1\mynn
\frac{1}{(1-s)^r}\,\dif{s}=\frac{1}{1-r}<\infty,
\end{equation*}
provided that $r<1$. Next, substituting \eqref{eq:H04} into
\eqref{eq:omega}, we get the limit shape
\begin{equation*}
\fbox{\ $\displaystyle \omega^*_4(x)=\frac{\rme^{-\gamma
x}}{\gamma\mypp(1-\rho\mypp\rme^{-\gamma x})^r}$}
\end{equation*}

\medskip In Example \ref{ex:5} we have, according to
\eqref{eq:kappa1} and~\eqref{eq:H05},
\begin{equation*}
\gamma^2=\int_0^1\mynn (1+\rho\myp
u)^{m-1}\,\dif{u}=\frac{(1+\rho)^{m}-1}{\rho\mypp m}\myp,
\end{equation*}
and the limit shape \eqref{eq:omega} specializes to
\begin{equation*}
\fbox{\ $\displaystyle \omega^*_5(x)=\frac{\rme^{-\gamma
x}\mypp(1+\rho\mypp\rme^{-\gamma x})^{m-1}}{\gamma}$}
\end{equation*}
In particular, if $\rho=1$, $m=2$, then $\gamma^2=\frac32$.

\smallskip Finally, in Example \ref{ex:6} the parameter $\gamma$ may
only be computed numerically, to which end it is more convenient to
use the formula \eqref{eq:kappa1}. The limit shape can then be
plotted using the explicit equation \eqref{eq:omega} with the
function $H_0(\rme^{-\gamma x})$ evaluated from the formula
\eqref{eq:H06},
\begin{equation*}
\fbox{\ \myp$\displaystyle
\omega^*_6(x)=\frac{r\vphantom{X^x}}{\gamma\vphantom{y_y}}\mypp\Bigl\{\gamma
x-\ln\myn\rho+\ln\mynn\bigl( -\ln\myn(1-\rho\mypp \rme^{-\gamma
x})\bigr)\Bigr\}$}
\end{equation*}
For instance, taking $r=1$, $\rho=0.5$ we computed\mypp\footnote{\
Numerical computations and graphical outputs were obtained using
\texttt{Maple\myp\scriptsize\texttrademark}.}
$\gamma\doteq0.532202$,
with the corresponding limit shape shown in Fig.\,\ref{fig3}a. For a
comparison, we also plotted the limit shape with parameters $r=1$,
$\rho=1$, giving $\gamma\doteq0.853636$
(see Fig.\,\ref{fig3}b).

\begin{figure}[h]
\thinlines
\mbox{}\hspace{2.5pc}\includegraphics[width=2.50in]{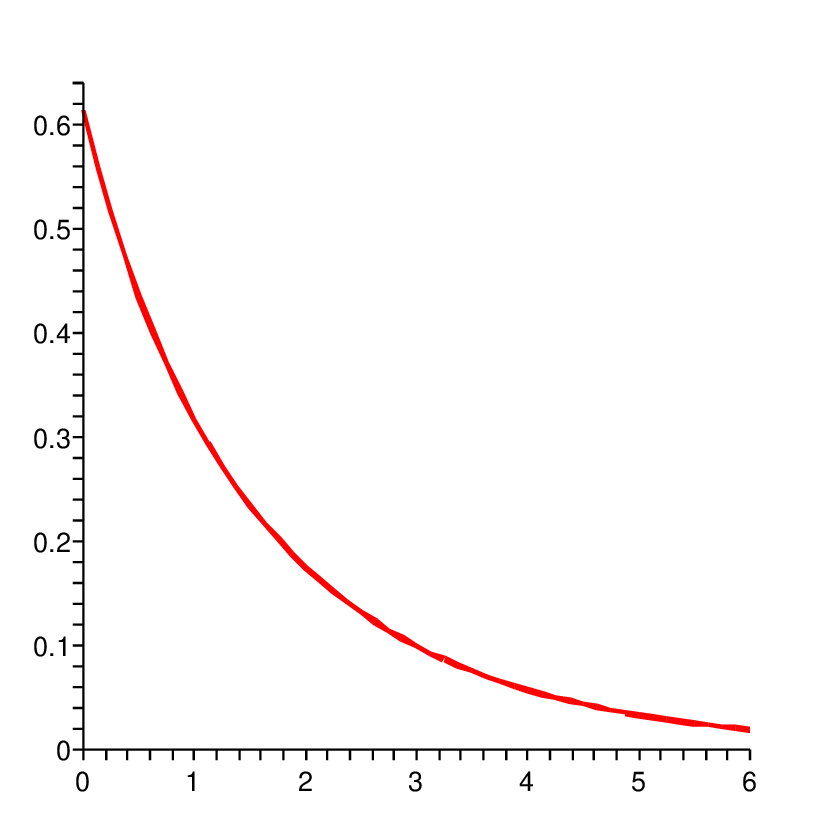}\hspace{.5in}
\includegraphics[width=2.50in]{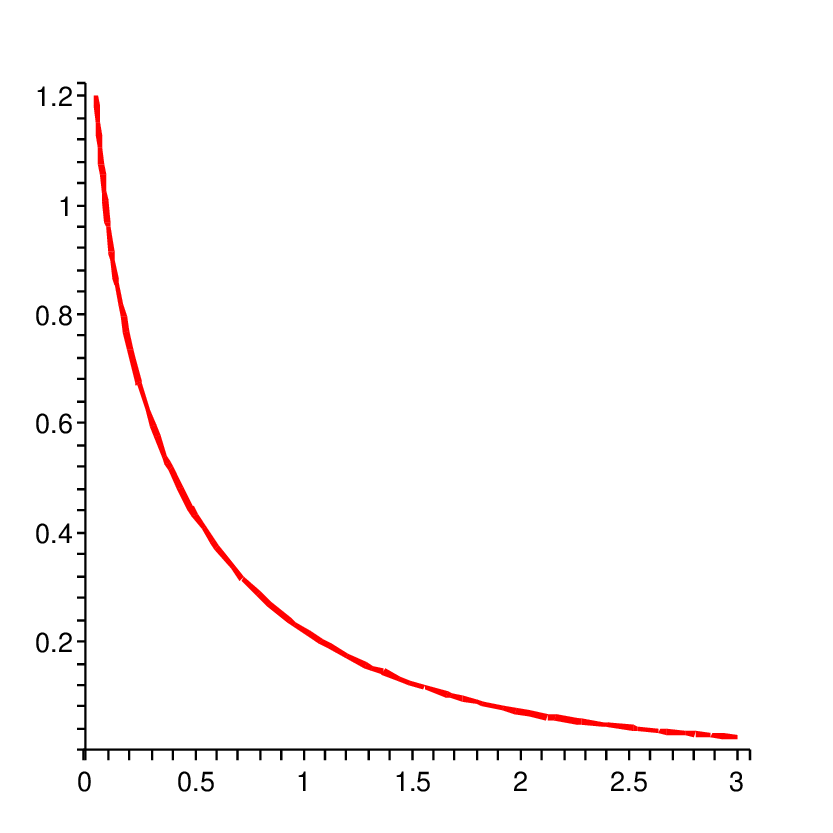}
\put(-315,-6){\mbox{\small (a)}} %
\put(-94,-6){\mbox{\small (b)}}  %
\put(-231,14){\mbox{\scriptsize$x$}}
\put(-12,14){\mbox{\scriptsize$x$}}
\put(-397,160){\mbox{\scriptsize$y$}}
\put(-179,160){\mbox{\scriptsize$y$}} %
\caption{The limit shape $y=\omega^*(x)$ in Example \ref{ex:6}, with
the function $H_0(u)$ given by equation \eqref{eq:H06}: \,(a)
\,$r=1$, \myp$\rho=0.5$ \,($\gamma\doteq0.532202$); \;(b)
\,$r=\rho=1$ \,($\gamma\doteq0.853636$).} \label{fig3}
\end{figure}

\section*{Acknowledgments}\label{ack}
This work was supported in part by a Leverhulme Research Fellowship.
Partial support by the Hausdorff Research Institute for Mathematics
(Bonn) is also acknowledged. The author is grateful to Boris
Granovsky, Anatoly Vershik and Yuri Yakubovich for helpful
discussions, and to the anonymous referees for constructive comments
that helped to improve the presentation.

\end{document}